\documentclass[11pt]{article}
\usepackage[pagebackref,colorlinks=true,urlcolor=blue,linkcolor=blue,citecolor=blue]{hyperref}

\usepackage{amsthm, amssymb, bm, bbm}
\usepackage{soul, color,cancel}
\usepackage[]{amsmath}
\usepackage[]{amsfonts}
\usepackage[]{fancyhdr}
\usepackage[]{graphicx}
\graphicspath{{/EPSF/}{../figures/}{./figures/}}

\newtheorem{theorem}{Theorem}[]
\newtheorem{lemma}[theorem]{Lemma}
\newtheorem{proposition}[theorem]{Proposition}

\newtheorem{remark}[theorem]{Remark}

\def \Cm {\mathbb{C}}
\def \Dmm {\mathbb{D}}
\def \Dm {M}

\def \Nm {\mathbb{N}}
\def \Rm {\mathbb{R}}
\def \Sm {\mathbb{S}}

\def \Zm {\mathbb{Z}}

\def\F{\mathcal{F}}
\def\H{\mathcal{H}}

\def\L{\mathcal{L}}

\def\SS{\mathcal{S}}

\def\V{\mathcal{V}}

\newcommand{\cout}[1]{}

\newcommand{\sgn}[1]{\,{\rm sign}(#1)}

\newcommand{\dprod}[2]{\langle{#1},{#2}\rangle}

\newcommand{\zbar}{\overline{z}}

\newcommand{\Aut}{\text{Aut} }

\def \ss {\mathfrak{s}}
\def \SS {\mathcal{S}}

\newcommand{\mat}[4]{\left[ \begin{array}{cc} #1 & #2 \\ #3 & #4 \end{array} \right]}

\renewcommand{\F}[1]{ #1} 

\newcommand{\smat}[4]{  \left[ \begin{smallmatrix} #1 & #2 \\ #3 & #4 \end{smallmatrix}  \right]}
\renewcommand{\mat}[4]{  \left[ \begin{array}{cc} #1 & #2 \\ #3 & #4 \end{array}  \right] }

\newcommand{\wtZ}{Z^\kappa}

\hoffset 0.0cm
\textwidth 16.0cm
\oddsidemargin 0.0cm
\evensidemargin 0.0cm

\title{Range characterizations and Singular Value Decomposition of the geodesic X-ray transform on disks of constant curvature}
\author{Rohit Kumar Mishra\thanks{Department of Mathematics, University of Texas at Arlington, 655 W Mitchell Street, Arlington, TX 76010; email: rohit.mishra@uta.edu } \and Fran\c{c}ois Monard\thanks{Department of Mathematics, University of California Santa Cruz, 1156 High St, Santa Cruz, CA 95064; email: fmonard@ucsc.edu; The authors ackowledge funding from NSF grant DMS-1814104.}}

\begin{document}
\maketitle

\begin{abstract}
    For a one-parameter family of simple metrics of constant curvature ($4\kappa$ for $\kappa\in (-1,1)$) on the unit disk $M$, we first make explicit the Pestov-Uhlmann range characterization of the geodesic X-ray transform, by constructing a basis of functions making up its range and co-kernel. Such a range characterization also translates into moment conditions {\it \`a la} Helgason-Ludwig or Gel'fand-Graev. We then derive an explicit Singular Value Decomposition for the geodesic X-ray transform. Computations dictate a specific choice of weighted $L^2-L^2$ setting which is equivalent to the $L^2(M, dVol_\kappa)\to L^2(\partial_+ SM, d\Sigma^2)$ one for any $\kappa\in (-1,1)$. 
\end{abstract}

\section{Introduction}
Our object of study is the geodesic X-ray transform on a special family of simple surfaces. To give some context, fix a Riemannian surface $(M,g)$, with strictly convex boundary and no infinite-length geodesic. Denote its unit circle bundle $SM := \{ (x,v) \in TM,\ g_x(v,v) = 1\}$. The manifold of geodesics can then be modelled over the inward boundary $\partial_+ SM$ (points in $SM$ such that $x\in \partial M$ and $v$ points inwards), carrying the surface measure $d\Sigma^2$ inherited from the Sasaki volume form on $SM$. In this context, one defines the geodesic X-ray transform $I_0\colon C^\infty(M) \to C^\infty(\partial_+ SM)$ as 
\begin{align*}
    I_0 f(x,v) := \int_0^{\tau(x,v)} f(\gamma_{x,v}(t))\ dt, \qquad (x,v)\in \partial_+ SM,
\end{align*}
where $\gamma_{x,v}(t)$ is the unit-speed geodesic with $\gamma(0) = x$ and $\dot \gamma(0) = v$, and $\tau(x,v)$ is its first exit time. In integral geometry, one is concerned with the reconstruction of $f$ from knowledge of $I_0 f$, a problem with various generalizations (to tensor fields, general flows and sections of bundles), whose answer may depend on geometric features of the underlying metric, see \cite{Ilmavirta2019} for a recent topical review. Under the additional assumption that $M$ has no conjugate points\footnote{The three assumptions of convex boundary, no infinite-length geodesic, and no conjugate points, are summed up into the term {\em simple manifold}.}, positive answers to this problem can be provided, with varying degrees of explicitness. The problem is known to be injective in general \cite{Mukhometov1977}; the function $f$ can be reconstructed via explicit inversion formulas in constant curvature spaces \cite{Radon1917,Helgason1999}, and modulo compact error in variable curvature \cite{Pestov2004,Krishnan2010,Monard2013}. In \cite{Pestov2004}, a general range characterization of $I_0$ is given in terms of a 'boundary' operator $P_-$ (i.e., from a spaces of functions on $\partial_+ SM$ to itself), which was proved by the second author in \cite{Monard2015a} to be equivalent to the classical moment conditions (see Helgason-Ludwig \cite{Ludwig1966,Helgason1999} or Gel'fand-Graev \cite{GelfandGraev1960}) in the Euclidean case. 

Of crucial importance for practical purposes is the knowledge of the Singular Value Decomposition (SVD) of the operator $I_0$, be it for truncation and regularization purposes \cite{Natterer2001,Bertero1998}, to understand the structure of 'ghosts' in the case of discrete data \cite{Louis1984,Louis1985}, or to seek low-dimensional ansatzes in the case of incomplete data \cite{Louis1986,Louis1996}. Several results on the SVD of ray transforms have been obtained, mainly existing in the Euclidean case: on functions in \cite{Marr1974,Maass1987,Maass1991,Maass1992,Quinto1983,Natterer2001}, tensor fields in \cite{Kazantsev2004} and for the transverse ray transform in \cite{Derevtsov2011a}. Other transforms on circularly-symmetric families of curves have extensively been studied, see e.g. \cite{Cormack1981,Cormack1982,Rigaud2014}, though the literature on the SVD of the X-ray transform for families of geodesic curves remains scarce to the authors' knowledge. We present below a case where the SVD can be computed in a geodesic context with metrics of constant curvature $4\kappa$, $\kappa\in (-1,1)$, on the unit disk $\Dm = \{(x,y)\in \Rm^2,\ x^2+y^2\le 1\}$. \F{While an extension of the results to the case of ``Herglotz'' type metrics\footnote{By ``Herglotz'' type metric, we mean a scalar, rotation-invariant metric satisfying a non-trapping condition.} seems natural and of interest to the authors, the explicitness of the present results hinges on Lemmas \ref{lem:holo} and \ref{lem:sqrtjac} below, which at the moment take the form of calculations specific to constant curvature.} 

As the works \cite{Marr1974,Maass1991,Maass1992} show, even in the Euclidean case there are a few 'natural' choices of weighted $L^2-L^2$ settings to be decided upon, for which the SVD of $I_0$ may or may not be computationally tractable. The current generalization to Riemannian settings gives even more options of weights to be chosen for the target $L^2$ space, and somewhat surprisingly, the most 'tractable' codomain topology so far is $L^2(\partial_+ S\Dm, d\Sigma^2)$. In this case, the SVD functions obtained on $\Dm$ involve the Zernike polynomials \cite{Zernike1934}, up to some rational diffeomorphism and multiplication by an appropriate $\kappa$-dependent weight. The functions obtained are no longer polynomials, however. 

\F{Although the calculations of the present article are self-contained, several aspects of X-ray transforms motivate this work and the intuition behind it. A reader interested on aspects related to transport equations on the unit circle bundle, and/or microlocal aspects, may find relevant information in the expository paper \cite{Ilmavirta2019} and the references there. In some ways, the approach of the present paper follows that of \cite{Monard2015a}, where the X-ray transform on the Euclidean disk is treated. There, Euclidean geometry is nice enough that a full understanding of the X-ray transform defined on more general classes of integrands (vector fields and tensor fields) can be obtained, and the present results represent a first step towards achieving that same level of understanding on constant curvature spaces. 

Lastly, in their connection with inverse problems, an important motivation for our results is the following:} while it is documented that X-ray transforms are mildly ill-posed of order $1/2$ on simple surfaces, and severely ill-posed on some non-simple surfaces (see, e.g., \F{the works} \cite{Stefanov2012a,Monard2013b,Holman2017} \F{which address} the unconditional instability incurred by conjugate points), no analysis has been made of this transition of behavior as a metric evolves from simple to non-simple. The current article presents the first analysis that quantifies what happens as one approaches some borderline cases of simplicity, by fully describing the action of the geodesic X-ray transform along a one-parameter curve of metrics, whose endpoints are two such borderline cases (as $\kappa\to -1$, the manifold becomes non-compact; as $\kappa\to 1$, the manifold has conjugate points on its boundary, and the latter is also no longer convex). 

\paragraph{Main results.} As $I_0$ has infinite-dimensional co-kernel inside $L^2(\partial_+ SM, d\Sigma^2)$, we first endeavor to explicitly characterize this co-kernel. To this end, we use range characterization ideas coming from Pestov-Uhlmann \cite{Pestov2004} and refined in Proposition \ref{prop:PU} below. These range characterizations reframe the range of $I_0$ in terms of the range of an operator $P_-\in \L(L^2(\partial_+ SM, d\Sigma^2))$, or alternativaly in terms of the kernel of an operator $C_-\in \L(L^2(\partial_+ SM, d\Sigma^2))$ introduced in \cite{Monard2015a}. \F{These operators, initially motivated by how the fiberwise Hilbert transform acts on solutions of the geodesic transport equation inside $SM$ (see, e.g., \cite[\S 4]{Pestov2004}), admit a final expression solely in terms of ``boundary operators'', namely, the scattering relation and the fiberwise Hilbert transform on the fibers of $\partial SM$, given in  \eqref{eq:PC} below. As they are highly relevant in order to understand the range of $I_0$, yet their intuitive understanding is limited at this point, a workaround is to build their eigendecompositions in geometries where} the scattering relation can be explicitly worked out. Such an endeavor was first carried out in \cite{Monard2015a} in the case of the Euclidean disk, and a first salient feature of the present article is to generalize some of the results there, to the case of the unit disk equipped with the metric 
\begin{align}
    g_{\kappa}(z) = \left( 1 +\kappa |z|^2 \right)^{-2} |dz|^2, \qquad |z|\le 1,
    \label{eq:metric}
\end{align}
of constant curvature $4\kappa$ for any fixed $\kappa\in (-1,1)$. Specifically, we establish the singular value decomposition of the operators $P_-$ and $C_-$ when viewed as operators from $L^2(\partial_+ S\Dm, d\Sigma^2)$ into itself, see Theorem \ref{thm:SVD_PC} below. This in particular allows to formulate a few range characterizations of $I_0$. First note that as a function on $\partial_+ SM$, the X-ray transform of a function takes the same value whether one integrates from one end of a geodesic or the other. This gives a first symmetry, encapsulated by the map $\SS_A$ \eqref{eq:SA0}, mapping one end of a geodesic to the other. By $\SS_A^*$ we denote the pullback $\SS_A^* u := u\circ \SS_A$.  

\begin{theorem} \label{thm:range}
    Let $\Dm$ be equipped with the metric $g_\kappa$ \eqref{eq:metric} for $\kappa\in (-1,1)$ fixed. Suppose $u\in C^\infty(\partial_+ S\Dm)$ such that $\SS_A^* u = u$. Then the following conditions are equivalent: 
    
    (1) $u$ belongs to the range of $I_0\colon C^\infty(M)\to C^\infty(\partial_+ SM)$.

    (2) There exists $w\in C_{\alpha,+,-}^\infty(\partial_+ S\Dm)$ such that $u = P_- \F{w}$. 

    (3) $C_- u = 0$. 

    (4) $u$ satisfies a complete set of orthogonality/moment conditions: $(u, \psi^\kappa_{n,k})_{L^2(\partial_+ S\Dm, d\Sigma^2)} = 0$ for all $n\ge 0$ and $k$ such that $k<0$ or $k>n$, where in fan-beam coordinates,
    \begin{align*}
	\psi^\kappa_{n,k}(\beta,\alpha) &:= \frac{(-1)^n}{4\pi} \sqrt{\ss_\kappa'(\alpha)} e^{i(n-2k)(\beta+\ss_\kappa(\alpha))} (e^{i(n+1)\ss_\kappa(\alpha)} + (-1)^n e^{-i(n+1)\ss_\kappa(\alpha)}), \\
	\ss_\kappa(\alpha) &:= \tan^{-1} \left( \frac{1-\kappa}{1+\kappa}\tan \alpha \right).
    \end{align*}
\end{theorem}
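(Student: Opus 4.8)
The plan is to split the four-way equivalence into an \emph{abstract} part, (1) $\Leftrightarrow$ (2) $\Leftrightarrow$ (3), and an \emph{explicit} part, (3) $\Leftrightarrow$ (4); the former rests on the refined Pestov--Uhlmann characterization of Proposition \ref{prop:PU}, while the latter rests on the singular value decomposition of $C_-$ provided by Theorem \ref{thm:SVD_PC}. Throughout I would work inside the symmetric subspace $\{u : \SS_A^* u = u\}$, which is natural because any $u = I_0 f$ takes the same value at the two endpoints of a geodesic and is therefore automatically $\SS_A$-invariant, so the hypothesis $\SS_A^* u = u$ costs no generality.

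For (1) $\Leftrightarrow$ (2) $\Leftrightarrow$ (3), I would invoke Proposition \ref{prop:PU} essentially verbatim on the symmetric subspace: it identifies membership in $\mathrm{ran}(I_0)$ equivalently with $u = P_- w$ for an admissible $w$ and with $C_- u = 0$. The points requiring care are the function-space bookkeeping, namely checking that $C_{\alpha,+,-}^\infty(\partial_+ S\Dm)$ is exactly the parity/fiber class on which $P_-$ is defined and whose image exhausts the symmetric range, together with the compatibility identity ($C_- P_- = 0$, equivalently the orthogonality between $\mathrm{ran}\, P_-$ and the non-kernel directions of $C_-$) on these smooth classes.

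For (3) $\Leftrightarrow$ (4), the substance is to recognize the $\psi^\kappa_{n,k}$ as the singular functions of $C_-$ furnished by Theorem \ref{thm:SVD_PC}. I would expand an arbitrary symmetric $u$ in the complete orthonormal system $\{\psi^\kappa_{n,k}\}$ of the symmetric part of $L^2(\partial_+ S\Dm, d\Sigma^2)$ and read off from the singular-value list which indices lie in $\ker C_-$: the singular values vanish precisely on the interior indices $0\le k\le n$ and are nonzero on the exterior indices $k<0$ or $k>n$. Consequently $C_- u = 0$ holds if and only if the coefficients of $u$ against every exterior $\psi^\kappa_{n,k}$ vanish, which is exactly the stated moment condition; the infinitude of exterior indices at each level $n$ is what reflects the infinite-dimensional co-kernel. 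The explicit fan-beam form then follows by transporting the SVD functions through the scattering relation $\alpha\mapsto\ss_\kappa(\alpha)$: the Fourier mode $e^{i(n-2k)(\beta+\ss_\kappa(\alpha))}$, the symmetrizing factor $e^{i(n+1)\ss_\kappa(\alpha)} + (-1)^n e^{-i(n+1)\ss_\kappa(\alpha)}$ enforcing $\SS_A$-invariance, and the Jacobian weight $\sqrt{\ss_\kappa'(\alpha)}$ making the system orthonormal in $d\Sigma^2$.

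The main obstacle I anticipate is twofold and intertwined: verifying that the closed-form $\psi^\kappa_{n,k}$ genuinely coincide with the abstract SVD output, and reconciling the regularity mismatch between the $C^\infty$ statements (1)--(3) and the $L^2$ orthogonality (4). The first hinges on the constant-curvature computations of Lemmas \ref{lem:holo} and \ref{lem:sqrtjac}, which produce the scattering map in fan-beam coordinates as the angular diffeomorphism $\ss_\kappa$ and track the change of the $d\Sigma^2$ inner product, whence the weight $\sqrt{\ss_\kappa'(\alpha)}$ arises. For the regularity, I would note that the $\psi^\kappa_{n,k}$ are smooth and that (4) cuts out a closed subspace containing the smooth range; combined with the $C^\infty$-range statement of Proposition \ref{prop:PU} and the mapping properties of $I_0$, a smooth symmetric $u$ satisfying (4) lies in the $L^2$-range and is then upgraded to the smooth range.
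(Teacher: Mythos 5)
Your proposal is correct and follows essentially the same route as the paper: Proposition \ref{prop:PU} for (1) $\Leftrightarrow$ (2), the identity $C_- P_- = 0$ plus injectivity of $C_-$ on $(\mathrm{Ran}\, P_-)^\perp$ inside $\V_+$ (both read off from Theorem \ref{thm:SVD_PC}) for (2) $\Leftrightarrow$ (3), and the eigendecomposition of $C_-$ transported through the re-indexing $(n,k)\mapsto (p,q)=(n-2k,n-k)$ for (3) $\Leftrightarrow$ (4). The only blemish is your opening claim that Proposition \ref{prop:PU} itself identifies the range with $\ker C_-$ (it concerns only $P_-$), but you immediately supply the missing ingredients yourself, so the argument coincides with the paper's.
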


\F{In the Euclidean case where $\kappa=0$, the functions $\psi_{n,k}^\kappa\equiv\psi_{n,k}$ are given in \eqref{eq:psinkeucl}, $\ss_\kappa (\alpha) = \alpha$, and the content of Theorem \ref{thm:range} is established in \cite[Theorem 2.3, \S 4]{Monard2015a}. Similarly to \cite[\S 4.4]{Monard2015a},} the characterization $(3)$ presents the advantage over $(2)$ that $C_-$ can be used to construct a projection operator (more precisely, $id+C_-^2$), allowing for example to project noisy data onto the range of $I_0$, see Theorem \ref{thm:projection} below. The orthogonality conditions $(4)$ are indexed over the eigenfunctions of $C_-$ associated with nontrivial eigenvalues. 

Now that Theorem \ref{thm:range} allows to isolate distinguished functions in $L^2(\partial_+ S\Dm, d\Sigma^2)$ which are orthogonal, and to accurately locate the range of $I_0$, one is then tempted to apply the adjoint for $I_0$ in this topology, and show that the functions so obtained are orthogonal for a specific choice of measure on $\Dm$, thereby finding the SVD of (some version of) $I_0$ in the process. The second salient feature of this article is to carry this agenda in full extent, adapting the Euclidean scenario (whose outcome produces the Zernike polynomials, presented as in \cite{Kazantsev2004}, see also Figure \ref{fig:Zernike} and Section \ref{sec:SVDE}), to the case of constant curvature disks. The method of proof consists in relating the case $\kappa \ne 0$ with the case $\kappa = 0$ by constructing diffeomorphisms on $M$ and $\partial_+ SM$ which intertwine the adjoints of $I_0$ associated with each geometry. To formulate the theorem, in addition to $\ss_\kappa(\alpha)$ and $\{\psi_{n,k}^\kappa\}_{n\ge 0, k\in \Zm}$, we also define 
\begin{align}
    \wtZ_{n,k} (z) := \sqrt{\frac{1-\kappa}{1+\kappa}} \frac{1+\kappa|z|^2}{1-\kappa|z|^2} Z_{n,k} \left( \frac{1-\kappa}{1-\kappa|z|^2} z \right), 
    \label{eq:Znkk}
\end{align}
where $Z_{n,k}$ are the Zernike polynomials in the convention of \cite{Kazantsev2004}. The radial profiles of the functions $Z_{n,k}^\kappa$ for low values of $n$ and $k$ are given Figure \ref{fig:radialPlots}. The family $\{\wtZ_{n,k}\}_{n\ge 0,\ 0\le k\le n}$ is a complete orthogonal system of $L^2(\Dm, w_\kappa\ dVol_\kappa)$ where $w_\kappa(z) := \frac{1+\kappa|z|^2}{1-\kappa|z|^2}$ with norm $\|Z_{n,k}\|^2 = \frac{1}{1-\kappa^2} \frac{\pi}{n+1}$. In addition, the family $\{\psi_{n,k}^\kappa\}_{n\ge 0,\ k\in \Zm}$ is a complete orthogonal system of the space $L^2(\partial_+ S\Dm, d\Sigma^2) \cap \ker(id-\SS_A^*)$, with norm $\|\psi_{n,k}^\kappa\|^2 = \frac{1}{4(1+\kappa)}$. We formulate our second main result as follows: 

\begin{theorem} Let $\Dm$ be the unit disk equipped with the metric $g_\kappa(z)$ defined in \eqref{eq:metric} for $\kappa\in (-1,1)$, with volume form $dVol_\kappa$. Let $\psi_{n,k}^\kappa$, $Z_{n,k}^\kappa$ defined as above and denote $\widehat{\wtZ_{n,k}}$ and $\widehat{\psi_{n,k}^\kappa}$ their normalizations in the respective spaces $L^2(M, w_\kappa\ dVol_\kappa)$ and $L^2(\partial_+ SM, d\Sigma^2)$. Then given any $f\in w_\kappa L^2(\Dm, w_\kappa\ dVol_\kappa)$, admitting a unique expansion
    \begin{align*}
	f = w_\kappa \sum_{n\ge 0} \sum_{k=0}^n f_{n,k} \widehat{\wtZ_{n,k}}, \qquad f_{n,k} := \left( f, \widehat{\wtZ_{n,k}} \right)_{L^2(\Dm, dVol_\kappa)}, \qquad \sum_{n,k} |f_{n,k}|^2 <\infty,
    \end{align*}
    we have
    \begin{align*}
	I_0 f = \sum_{n\ge 0} \sum_{k=0}^n \sigma^\kappa_{n,k}\ f_{n,k}\ \widehat{\psi^\kappa_{n,k}}, \qquad \sigma_{n,k}^\kappa := \frac{1}{\sqrt{1-\kappa}} \frac{2\sqrt{\pi}}{\sqrt{n+1}}. 
    \end{align*}
    In particular, the Singular Value Decomposition of $I_0 w_\kappa\colon L^2(M, w_\kappa\ dVol_\kappa)\to L^2(\partial_+ SM, d\Sigma^2)$ is $(\widehat{\wtZ_{n,k}}, \widehat{\psi_{n,k}^\kappa}, \sigma_{n,k}^\kappa)_{n\ge 0,\ 0\le k\le n}$.
    \label{thm:main}
\end{theorem}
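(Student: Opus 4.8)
The plan is to reduce the curved computation ($\kappa\ne 0$) to the Euclidean one ($\kappa=0$), where the singular system of $I_0$ is classically known: the Zernike polynomials $Z_{n,k}$ make up the domain side, the functions $\psi_{n,k}^0$ the target side, with singular values $\sigma_{n,k}^0 = 2\sqrt{\pi}/\sqrt{n+1}$ (this is the $\kappa=0$ specialization, cf. \cite{Kazantsev2004} and \cite{Monard2015a}). Every $\kappa$-dependent weight appearing in the statement is engineered so that explicit diffeomorphisms transport this Euclidean singular system to the curved one, and the proof is essentially the verification that they do. First I would introduce the radial base diffeomorphism $\Phi_\kappa\colon \Dm\to\Dm$, $\Phi_\kappa(z) = \frac{1-\kappa}{1-\kappa|z|^2}z$, already visible inside \eqref{eq:Znkk}, together with the companion map on $\partial_+ S\Dm$ governed by the fan-beam substitution $\alpha\mapsto\ss_\kappa(\alpha)$. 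The essential geometric input is Lemma \ref{lem:holo}, which I expect to establish that $\Phi_\kappa$ carries $g_\kappa$-geodesics to Euclidean chords (a projective/holomorphic feature special to constant curvature), and Lemma \ref{lem:sqrtjac}, which produces the exact square-root Jacobian factors. Because a $g_\kappa$-geodesic is mapped to a straight chord, the curved line integral $I_0^\kappa f$ along it coincides, after the change of variables $\Phi_\kappa$, with a Euclidean line integral of $f\circ\Phi_\kappa^{-1}$ reweighted by the reparametrization Jacobian.

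The heart of the argument is to package this as an exact intertwining. Let $U$ be the base operator that composes with $\Phi_\kappa$ and multiplies by the appropriate Jacobian weight, i.e. the combination $\sqrt{\frac{1-\kappa}{1+\kappa}}\,w_\kappa\,(\,\cdot\circ\Phi_\kappa)$ seen in \eqref{eq:Znkk}, with $w_\kappa = \frac{1+\kappa|z|^2}{1-\kappa|z|^2}$; and let $V$ be the corresponding boundary operator, whose Jacobian supplies the factor $\sqrt{\ss_\kappa'(\alpha)}$ in $\psi_{n,k}^\kappa$. These weights are chosen precisely so that $U$ and $V$ are isometries between the relevant spaces: $U$ from $L^2(\Dm, w_\kappa\,dVol_\kappa)$ onto $L^2(\Dm, dVol_0)$ (the two domain topologies being equivalent since $w_\kappa$ is bounded above and below on $\Dm$ for $\kappa\in(-1,1)$), and $V$ from the $\SS_A^*$-symmetric part of $L^2(\partial_+ S\Dm, d\Sigma^2)$ onto its Euclidean counterpart. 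The content of this step is an identity of the form $I_0^\kappa w_\kappa = c_\kappa\, V^{-1} I_0^0\, U$ (equivalently, the announced intertwining of the two adjoints), with an explicit constant $c_\kappa$, obtained by combining the geodesic-mapping property of Lemma \ref{lem:holo} with the Jacobian bookkeeping of Lemma \ref{lem:sqrtjac}.

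Granting this, the SVD transports at once: $U$ sends $\widehat{\wtZ_{n,k}}$ to $\widehat{Z_{n,k}}$, $I_0^0$ sends that to $\sigma_{n,k}^0\,\widehat{\psi_{n,k}^0}$, and $V^{-1}$ returns $\widehat{\psi_{n,k}^\kappa}$, so that $I_0^\kappa w_\kappa\,\widehat{\wtZ_{n,k}} = c_\kappa\,\sigma_{n,k}^0\,\widehat{\psi_{n,k}^\kappa}$; tracking the residual constant yields $c_\kappa\,\sigma_{n,k}^0 = \frac{1}{\sqrt{1-\kappa}}\,\sigma_{n,k}^0 = \sigma_{n,k}^\kappa$, the factor $\frac{1}{\sqrt{1-\kappa}}$ arising from the overall normalization of $d\Sigma^2$ (the boundary conformal factor of $g_\kappa$) and the arc-length reparametrization not absorbed into the isometries $U,V$. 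Completeness of $\{\widehat{\wtZ_{n,k}}\}$ and of $\{\widehat{\psi_{n,k}^\kappa}\}$ in their respective spaces is inherited from the Euclidean orthonormal systems through $U$ and $V$, while Theorem \ref{thm:range} guarantees that the $\psi_{n,k}^\kappa$ with $0\le k\le n$ exhaust the range of $I_0$, so that no singular function is missed and the collection $(\widehat{\wtZ_{n,k}}, \widehat{\psi_{n,k}^\kappa}, \sigma_{n,k}^\kappa)$ is a genuine singular system.

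The main obstacle I anticipate is the intertwining step: proving that $\Phi_\kappa$ really maps $g_\kappa$-geodesics to Euclidean chords, and then computing the reparametrization and boundary Jacobians so that the weights $w_\kappa$, $\sqrt{\ss_\kappa'(\alpha)}$ and the scalar $\frac{1}{\sqrt{1-\kappa}}$ emerge on the nose. This is exactly where the constant-curvature hypothesis is indispensable (Lemmas \ref{lem:holo} and \ref{lem:sqrtjac}), and where the argument would break down for the general ``Herglotz'' metrics mentioned after \eqref{eq:metric}. A secondary point requiring care is verifying that $V$ preserves the $\SS_A^*$-symmetry, so that the range characterization of Theorem \ref{thm:range} applies verbatim in the curved setting and the target orthogonal system is correctly identified.
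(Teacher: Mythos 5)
Your overall strategy---transporting the Euclidean singular system through the base map $\Phi_\kappa(z)=\frac{1-\kappa}{1-\kappa|z|^2}z$ and the boundary substitution $\alpha\mapsto\ss_\kappa(\alpha)$---is the same philosophy the paper announces for Theorem \ref{thm:main} (``constructing diffeomorphisms on $M$ and $\partial_+ SM$ which intertwine the adjoints of $I_0$''), and your structural claims are in fact all true: $\Phi_\kappa$ does map $g_\kappa$-geodesics to Euclidean chords with the same endpoints (checkable from the M\"obius parameterization $T(x)=\frac{e^{i\theta}x+\rho}{1-\kappa e^{i\theta}\rho x}$ of geodesics, whose image under $\Phi_\kappa$ lies on the straight line through its endpoints), your boundary operator $V$ sends $\psi_{n,k}$ of \eqref{eq:psinkeucl} exactly to $\psi^\kappa_{n,k}$ of \eqref{eq:psink}, and $U,V$ are constant multiples of isometries. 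However, as written the proposal has two genuine gaps at its central step. First, the geodesic-straightening property is not what Lemma \ref{lem:holo} says: that lemma is a Fourier-analytic statement (that $e^{2i\ss(\alpha)}$ is a holomorphic even series in $e^{i\alpha}$), used in Section \ref{sec:3} to compute Hilbert transforms of the boundary basis; nothing in the paper states the straightening property, so it would have to be formulated and proved as a new lemma.

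Second---and this is the step that genuinely can fail if treated as mere ``bookkeeping''---the forward intertwining $I_0(w_\kappa f)=c_\kappa V^{-1}I_0^0(Uf)$, with $U$ a multiplication-plus-pullback operator, requires the ratio of $g_\kappa$-arclength to Euclidean arclength along the image chord to factor as a constant depending only on the geodesic times a function of position only. Since $\Phi_\kappa$ is a radial, non-conformal stretch, this ratio a priori depends on the direction of the geodesic at each point, not just on the point; the factorization holds only because of rotational symmetry: the direction-dependent quantity that appears is $\frac{\rho\sin\phi}{1+\kappa\rho^2}$ (with $\phi$ the angle between the geodesic and the radial direction), which is precisely the conserved Clairaut invariant of $g_\kappa$-geodesics, and its constancy along each geodesic, combined with the specific weight $w_\kappa$, is what collapses the Jacobian into the required form and produces the factor $\sqrt{\ss'(\alpha)}$ via $\ss'(\alpha)=\frac{1-\kappa^2}{(1+\kappa)^2-4\kappa\sin^2\alpha}$, cf. \eqref{eq:sprime}. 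Your proposal does not identify this mechanism. The paper sidesteps the issue by working on the adjoint side instead: it computes $I_0^\sharp[\psi^\kappa_{n,k}/\mu]$ directly from the footpoint formula \eqref{eq:adjoint}, using the identities \eqref{eq:important} of Lemma \ref{lem:sqrtjac}, the explicit footpoint relations of Lemma \ref{lem:btalminus} (proved with M\"obius transformations and the law of sines---which is where the analogue of your Clairaut input actually enters), and the fiber change of variables of Lemma \ref{lem:thetap}; this yields \eqref{eq:wtZ} and the vanishing for $k<0$ or $k>n$, after which orthogonality of the $\wtZ_{n,k}$ and a duality argument (Theorem \ref{thm:SVDI0sharp}) finish the proof. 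So your route is viable and arguably more geometric, but to become a proof it needs the straightening lemma and the Clairaut-based Jacobian factorization spelled out rather than deferred to lemmas that assert something else.
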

The case $\kappa = 0$ recovers the Euclidean case\F{, where $Z_{n,k}^\kappa = Z_{n,k}$ (the Zernike polynomials as presented in \cite{Kazantsev2004}), $\psi_{n,k}^\kappa = \psi_{n,k}$ is given in \eqref{eq:psinkeucl} and $w_\kappa \equiv 1$}. The appearance of the weight $w_\kappa$ is a result of the method. For any $\kappa\in (-1,1)$, since $w_\kappa$ is bounded above and below by positive constants, the topologies $w_\kappa L^2(\Dm, w_\kappa\ dVol_\kappa)$ and $L^2(\Dm,\ dVol_\kappa)$ are equivalent.

\begin{figure}[htpb]
    \centering
    \includegraphics[height=0.3\textheight]{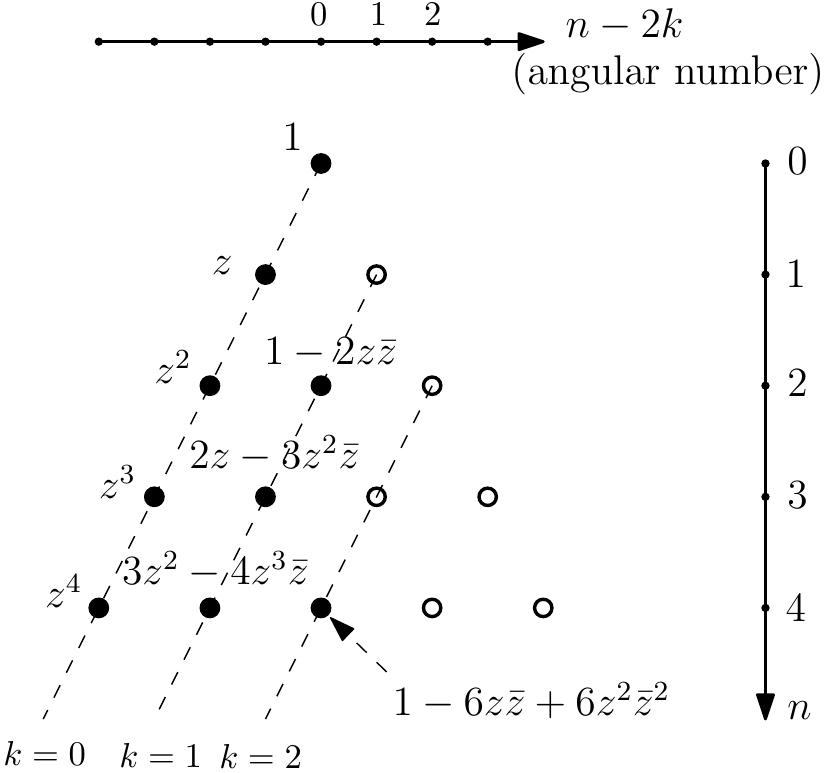}
    \caption{Structure of the Zernike polynomials in the convention of \cite{Kazantsev2004}. The ones marked '$\circ$' can be deduced from the ones marked '$\bullet$' via the formula $Z_{n,n-k} = (-1)^n \overline{Z_{n,k}}$.}
    \label{fig:Zernike}
\end{figure}

\begin{figure}[htpb]
    \centering
    \includegraphics[width=0.32\textwidth]{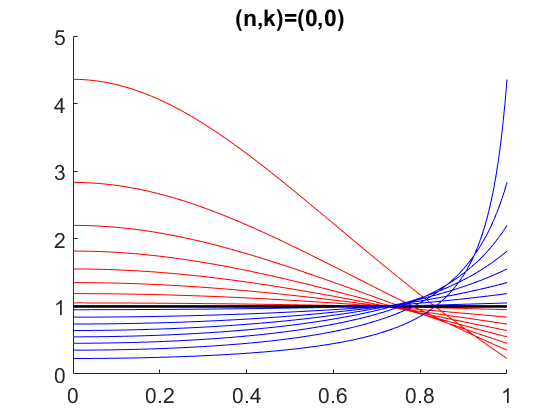}
    \includegraphics[width=0.32\textwidth]{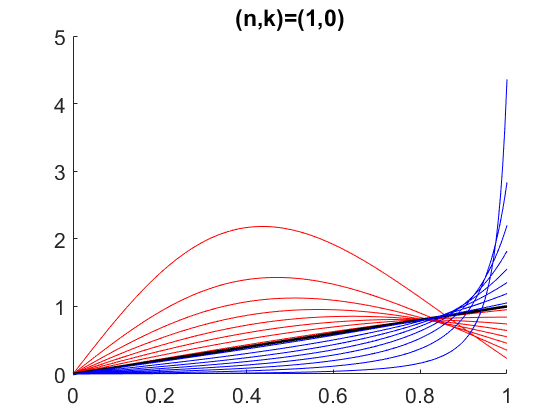}
    \includegraphics[width=0.32\textwidth]{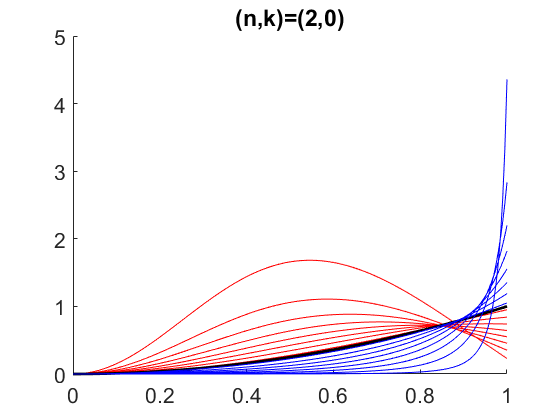} \\
    \includegraphics[width=0.32\textwidth]{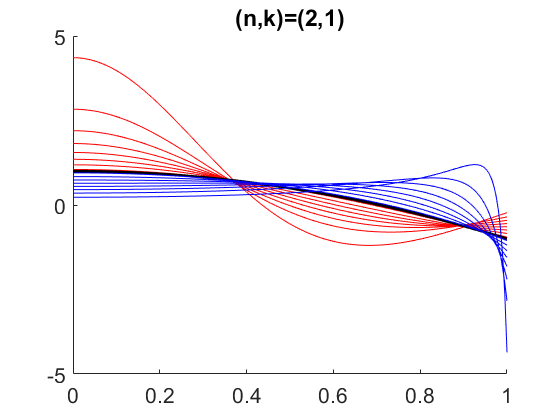}
    \includegraphics[width=0.32\textwidth]{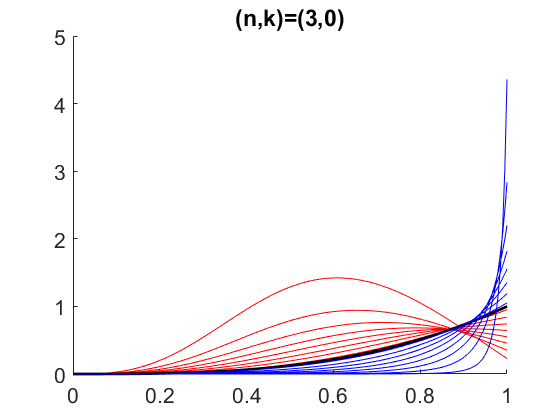}
    \includegraphics[width=0.32\textwidth]{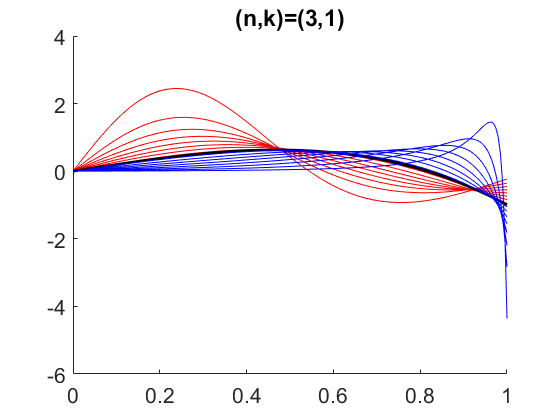} \\
    \includegraphics[width=0.32\textwidth]{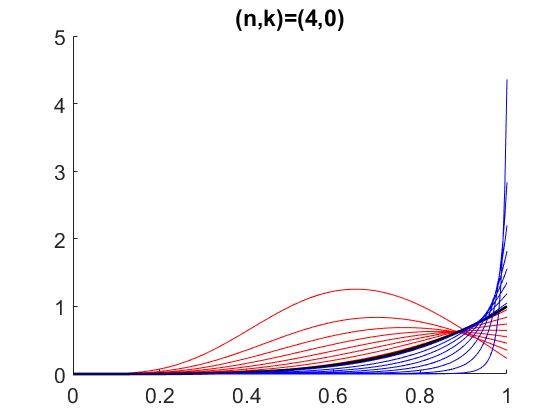}
    \includegraphics[width=0.32\textwidth]{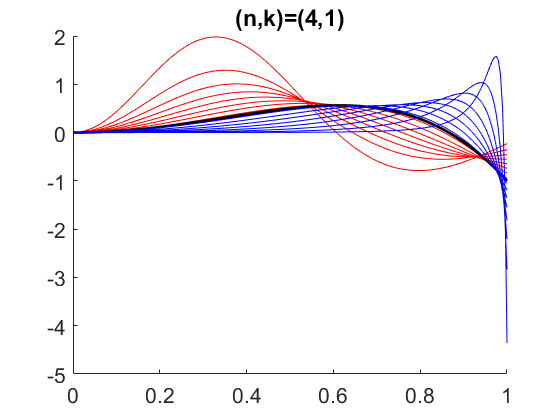}
    \includegraphics[width=0.32\textwidth]{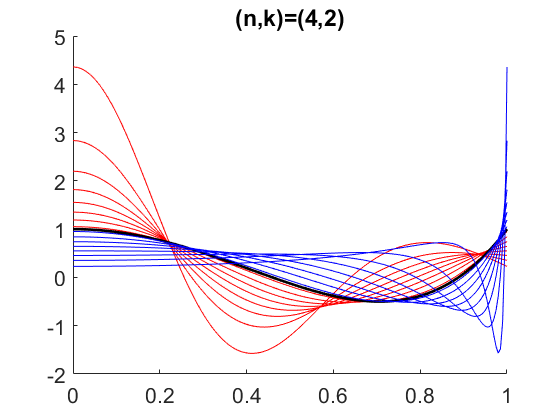}   
    \caption{Plots of the first few radial profiles of the singular functions $Z_{n,k}^\kappa (\rho e^{i\omega}) =Z_{n,k}(\rho) e^{i(n-2\kappa)\omega}$ defined in \eqref{eq:Znkk}, for various values of $\kappa\in (-1,0)$ (red) and $\kappa \in (0,1)$ (blue).}
    \label{fig:radialPlots}
\end{figure}

\paragraph{Outline.} The remainder of the article is structured as follows. In Section \ref{sec:prelims}, we first introduce the geometric models considered and compute their scattering relation, involving in particular an important function $\ss_\kappa(\alpha)$ (equal to $\alpha$ in the Euclidean case). In Section \ref{sec:3}, we construct the SVD's of the operators $P_-$ and $C_-$, which help describe the range of the geodesic X-ray transform in Theorem \ref{thm:range}. Finally, in Section \ref{sec:4}, we construct the SVD of an appropriate adjoint of $I_0$, and give a proof of Theorem \ref{thm:main}. 

\begin{remark}[On notation] In what follows, we will always work with one fixed value of $\kappa$, and all quantities are $\kappa$-dependent, whether specified in the notation or not. \F{Our choice for keeping some of the ``$\kappa$'' is mainly motivated by the fact that some equations such as \eqref{eq:Znkk} involve quantities associated with two different geometries (the one for some $\kappa\ne 0$, and the Euclidean one).} The following may give a sample of which ones generally include $\kappa$ in the notation and which ones do not: 
    \begin{align*}
	d\Sigma^2,\ g_\kappa,\ dVol_\kappa,\ \ss_\kappa,\ w_\kappa,\ \wtZ_{n,k},\ \psi_{n,k}^\kappa,\ \sigma_{n,k}^\kappa,\ C_-,\ P_-,\ SM,\ \SS,\ \SS_A,\ A_\pm,\ A_\pm^*,\ I_0,\ I_0^\sharp.
    \end{align*}
\end{remark}

\section{Preliminaries} \label{sec:prelims}

\subsection{Geometric models and their isometries}

For fixed $\kappa\in (-1,1)$, we consider the unit disk $\Dm$ equipped with the metric $g_\kappa (z) = c_\kappa(z)^{-2} |dz|^2$, $c_\kappa(z):= 1+ \kappa|z|^2$, of constant curvature $4\kappa$. Fixing $\kappa\in (-1,1)$, we will denote the unit circle bundle as 
\begin{align*}
    S\Dm = \{(z,v)\in S\Dm, \qquad |v|^2_{g_\kappa(z)} = 1\}.
\end{align*}
A point in $SM$ will be parameterized by $(z,\theta)$, where $\theta \in \Sm^1$ describes the tangent vector $v = c_\kappa(z) \binom{\cos\theta}{\sin \theta}$. The boundary $\partial S\Dm$ is parameterized in {\bf fan-beam coordinates} $(\beta,\alpha)\in \Sm^1\times \Sm^1$, where $z = e^{i\beta}$ denotes a point on $\partial M$ and $\alpha$ denotes the direction of the tangent vector $v = c_\kappa(1) e^{i(\beta+\pi+\alpha)}$ with respect to the inward normal, of direction $e^{i(\beta + \pi)}$. The boundary $\partial SM$ is equipped with a natural measure $d\Sigma^2 = c_\kappa^{-1}(1)\ d\beta\ d\alpha$, coming from restricting the Sasaki metric defined on $S\Dm$. The boundary has two distinguished components: the inward boundary $\partial_+ S\Dm = \{\alpha\in [-\pi/2, \pi/2]\}$ and the outward one $\partial_- S\Dm = \{\alpha\in [\pi/2,3\pi/2]\}$ which intersect at tangential vectors, where $\alpha= \pm \pi/2$.

For fixed $\kappa\in (0,1)$, the manifold $(\Dm,g_{\kappa})$ can be viewed as a simple surface included in the \F{Riemann sphere} $(\Cm\cup \{\infty\},g_{\kappa})$ and for $\kappa\in (-1,0)$, the manifold $(\Dm,g_{\kappa})$ can be viewed as a simple surface included in the hyperbolic space $(\Dmm_{(-\kappa)^{-1/2}}, g_{\kappa})$, where $\Dmm_{(-\kappa)^{-1/2}} = \{(x,y)\in \Rm^2,\ x^2+y^2< -\kappa^{-1}\}$. In either case, $\kappa\to 0$ recovers the standard Euclidean disk. As $|\kappa|\to 1$, simplicity breaks down for two different reasons: $(\Dm,g_{1})$ becomes a ``hemisphere'' with totally geodesic (i.e., non-convex) boundary and $(\Dm, g_{-1})$ is, up to some scalar constant\footnote{Customarily, the Poincar\'e disk carries four times this metric.}, the Poincar\'e disk, non-compact. In the latter, the interior of $\Dm$ is geodesically complete, all geodesics are asymptotically normal to the boundary and the fan-beam coordinate system breaks down.

To compute geodesics, we will use the action of isometries of either model, to move the following obvious geodesics
\begin{align}
    \begin{split}
	\kappa<0: &\qquad (z(t),\theta(t)) = \left( \frac{1}{\sqrt{-\kappa}} \tanh \left( \sqrt{-\kappa}\ t \right), 0\right), \qquad t\in \Rm, \\
	\kappa>0: &\qquad (z(t),\theta(t)) = \left( \frac{1}{\sqrt{\kappa}} \tan \left( \sqrt{\kappa}\ t \right), 0\right),  \qquad t\in \left( -\frac{\pi}{2\sqrt{\kappa}}, \frac{\pi}{2\sqrt{\kappa}} \right).	
    \end{split}
    \label{eq:geo0}    
\end{align}

One can find those isometries by conjugating the automorphisms of the Poincar\'e disk or the Riemann sphere with appropriate homotheties, which would result in subgroups of M\"obius transformations. Under this latter assumption, let us find those directly, with the immediate observation that a M\"obius transformation $T(z) = \frac{az+c}{cz+d}$ pushes forward a tangent vector $(z,\zeta)$ to $T\cdot (z,\zeta) = (T(z), T'(z) \zeta)$. We will also write $T(z) = \frac{az+b}{cz+d} = \smat{a}{b}{c}{d} (z)$ interchangeably. 

\begin{lemma}\label{lem:isometries} For $\kappa\in (0,1)$, the isometry group of $(\Cm\cup \{\infty\},g_{\kappa})$ is given by
    \begin{align}
	\Aut (\Cm\cup \{\infty\}, g_{\kappa}) = \left\{ \mat{a}{b}{-\kappa\bar b}{\bar a}, \qquad |a|^2 + \kappa |b|^2 = 1     \right\}.
	\label{eq:Aut1}
    \end{align}
    For $\kappa\in (-1,0)$, the isometry group of $(\Dmm_{(-\kappa)^{-1/2}}, g_{\kappa})$ is given by 
    \begin{align}
	\Aut (\Dmm_{(-\kappa)^{-1/2}}, g_{\kappa}) = \left\{ \mat{a}{b}{-\kappa\bar b}{\bar a}, \qquad |a|^2 +\kappa |b|^2 = 1     \right\}.
	\label{eq:Aut2}
    \end{align}   
\end{lemma}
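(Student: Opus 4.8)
The plan is to handle both signs of $\kappa$ at once by exploiting the conformal form of the metric. Writing $g_\kappa = \lambda^2|dz|^2$ with $\lambda(z) = c_\kappa(z)^{-1} = (1+\kappa|z|^2)^{-1}$, a M\"obius map $T(z) = \frac{az+b}{cz+d}$ satisfies $T^*(\lambda^2|dz|^2) = \lambda(T(z))^2|T'(z)|^2|dz|^2$, so it is an isometry if and only if $\lambda(T(z))\,|T'(z)| = \lambda(z)$ pointwise. Since $(a,b,c,d)$ is determined only up to a common scalar, I first normalize $ad-bc = 1$, whence $T'(z) = (cz+d)^{-2}$, and the isometry condition, after substituting $\lambda$ and clearing denominators, reduces to the pointwise identity
\[
    1 + \kappa|z|^2 = |cz+d|^2 + \kappa|az+b|^2, \qquad z \in \Cm.
\]

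First I would expand the right-hand side and match the coefficients of the independent monomials $1,\ z,\ \bar z,\ |z|^2$, obtaining the three scalar relations $|c|^2 + \kappa|a|^2 = \kappa$, $\ |d|^2 + \kappa|b|^2 = 1$, and $c\bar d + \kappa a\bar b = 0$. The step that organizes everything is to recognize these as the entries of the single matrix identity $M^* J M = J$, where $M = \smat{a}{b}{c}{d}$ and $J = \smat{\kappa}{0}{0}{1}$: the two diagonal entries of $M^*JM$ reproduce the first two relations and the off-diagonal entries reproduce the third (together with its conjugate from the $\bar z$ coefficient). Thus the normalized M\"obius isometries are precisely the elements of $SU(J)$.

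To read off the explicit parametrization I would solve $M^*JM = J$ together with $\det M = 1$. Rewriting the former as $M^{-1} = J^{-1}M^*J$ and comparing with $M^{-1} = \smat{d}{-b}{-c}{a}$ forces $d = \bar a$ and $c = -\kappa\bar b$; feeding these back into $\det M = 1$ yields the constraint $|a|^2 + \kappa|b|^2 = 1$, which is exactly \eqref{eq:Aut1}--\eqref{eq:Aut2}. Conversely, the factorization $|cz+d|^2 + \kappa|az+b|^2 = (|a|^2+\kappa|b|^2)(1+\kappa|z|^2)$ (a direct expansion using $d=\bar a$, $c=-\kappa\bar b$) shows that every such matrix satisfies the pointwise identity and is therefore an isometry. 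Finally I would record the geometric identification: for $\kappa>0$ the form $J$ is positive definite, so $SU(J)$ is a copy of $SU(2)$ acting on the whole Riemann sphere, while for $\kappa<0$ the form $J$ has signature $(1,1)$, so $SU(J)$ is a copy of $SU(1,1)$, whose elements preserve the region $1+\kappa|z|^2>0$, i.e. the disk $\Dmm_{(-\kappa)^{-1/2}}$.

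The forward verification is routine once the factorization identity is in hand. The two points deserving care are bookkeeping rather than substance: I must justify the normalization $ad-bc=1$ (always achievable for an invertible M\"obius matrix, up to an irrelevant overall sign) before matching coefficients, and the coefficient matching itself relies on the linear independence of $1,\ z,\ \bar z,\ |z|^2$ as functions of $z$. I expect the only genuine insight to be the repackaging of the three scalar relations as the single Hermitian condition $M^*JM=J$; this both unifies the cases $\kappa>0$ and $\kappa<0$ and reduces the converse to a one-line matrix computation.
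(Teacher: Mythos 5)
Your proof is correct and follows essentially the same route as the paper: both arguments normalize $ad-bc=1$, reduce the isometry condition to the pointwise identity $1+\kappa|z|^2 = |cz+d|^2+\kappa|az+b|^2$, and extract the same three coefficient relations. The only divergence is in how those relations are solved. The paper multiplies $c\bar d + \kappa a \bar b = 0$ by $\bar a$ and by $\bar c$ and uses $ad-bc=1$ to deduce $c=-\kappa\bar b$ and $d=\bar a$ by direct algebra, whereas you repackage the three relations as the single Hermitian condition $M^*JM=J$ with $J=\mathrm{diag}(\kappa,1)$ and read off $d=\bar a$, $c=-\kappa\bar b$ by comparing $M^{-1}=J^{-1}M^*J$ with the adjugate formula. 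Your variant buys two things the paper leaves implicit: the sufficiency direction (the paper simply asserts the relations are ``necessary and sufficient,'' while your one-line factorization $|cz+d|^2+\kappa|az+b|^2=(|a|^2+\kappa|b|^2)(1+\kappa|z|^2)$ proves it), and the observation that for $\kappa<0$ the resulting maps preserve the region $1+\kappa|z|^2>0$, i.e.\ the disk $\Dmm_{(-\kappa)^{-1/2}}$, which is needed for \eqref{eq:Aut2}. One bookkeeping remark: like the paper, you restrict a priori to M\"obius transformations (the paper flags this assumption just before the lemma), so neither argument addresses anti-holomorphic isometries; your proof is faithful to the paper's convention on this point.
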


\begin{proof} The proofs of \eqref{eq:Aut1} and \eqref{eq:Aut2} are identical. We seek a M\"obius transformation $T = \smat{a}{b}{c}{d}$ with $ad-bc = 1$ such that $g_{\kappa} (T(z)) (T'(z)\zeta, T'(z)\zeta) = g_{\kappa}(z) (\zeta,\zeta)$ for all $(z,\zeta)$. This is recast as 
    \begin{align*}
	\frac{1}{|cz+d|^2} \frac{1}{1+\kappa |T(z)|^2} = \frac{1}{ 1+ \kappa |z|^2}, 
    \end{align*}    
    which yields, for all $z$ in the space considered
    \begin{align*}
	1 + \kappa|z|^2 &= |cz+d|^2 + \kappa|az+b|^2 \\
	&= (|c|^2 + \kappa |a|^2)|z|^2 + 2 \Re \left( z(c\bar d + \kappa a\bar b) \right) + |d|^2+\kappa |b|^2.
    \end{align*}
    This is equivalent to having the relations 
    \begin{align*}
	|c|^2 + \kappa|a|^2 = \kappa, \qquad c\bar d + \kappa a\bar b = 0, \qquad |d|^2+\kappa|b|^2 = 1.
    \end{align*}
    Multiplying the second by $\bar a$ and using the first and $ad-bc = 1$, we get
    \begin{align*}
	0 = c \bar a\bar d + \kappa|a|^2 \bar b = c (1+ \bar b \bar c) + \kappa |a|^2 \bar b = c + \bar b(|c|^2 + \kappa|a|^2) = c + \kappa\bar b,
    \end{align*}
    hence $c = -\kappa\bar b$. Similarly, multiplying the same equation by $\bar c$ yields
    \begin{align*}
	0 = |c|^2 \bar b + \kappa a \bar b \bar c = |c|^2 \bar b + \kappa a \bar a\bar b - \kappa a = (|c|^2 + \kappa|a|^2) \bar d - \kappa a = \kappa \bar d -\kappa a
    \end{align*}
    So $\bar d = a$. Finally, these two relations are necessary and sufficient to describe \eqref{eq:Aut1} and \eqref{eq:Aut2}. 
\end{proof}
 
Now, given $(z_1, \theta)$ corresponding to a unit tangent vector $(z_1, c_\kappa(z_1) e^{i\theta})$, we want to find the element $T$ which maps $(0,1)$ to $(z_1, c_\kappa (z_1) e^{i\theta})$, satisfying 
\begin{align*}
    T(0) = z_1, \qquad T'(0)\cdot 1 = c_\kappa(z_1) e^{i\theta}.
\end{align*}
Seeking for an element of the form \eqref{eq:Aut1} or \eqref{eq:Aut2} immediately leads to the unique transformation 
\begin{align*}
    T(z) = T^{\kappa}_{z_1,\theta}(z)= \frac{e^{i\theta}z + z_1}{1 - \kappa e^{i\theta} \bar{z_1} z}. 
\end{align*}

\subsection{Scattering relation}

We generally define the {\bf scattering relation} $\SS:\partial S\Dm \to \partial S\Dm$ as
\begin{align}
    \SS(x,v) = \varphi_{\pm \tau(x,\pm v)} (x,v), \qquad (x,v)\in \partial_\pm SM,
    \label{eq:scatrel}
\end{align}
where $\varphi_t(x,v) = (\gamma_{x,v}(t), \dot \gamma_{x,v}(t))$ denotes the geodesic flow on a Riemannian manifold $(M,g)$ and $\tau(x,v)$ denotes the first exit time of the geodesic $\gamma_{x,v}(t)$. In our case, we now compute this relation explicitly. 

First notice by rotation-invariance and symmetry of the family of curves, that in fan-beam coordinates, one expects an expression of the form $\SS(\beta,\alpha) = (\beta + f(\alpha), \pi-\alpha)$ for some function $f$ to be determined. To determine $f$, we then set $\beta = 0$. We first compute the geodesic through the point $(1, c_\kappa(1) e^{i(\pi+\alpha)})$ with $\alpha\in (-\pi/2,\pi/2)$. From the previous section, the unique isometry mapping $(0,1)$ to that point is given by 
\begin{align*}
    T(z) = \frac{1 - e^{i\alpha} z}{1 + \kappa e^{i\alpha}z},
\end{align*}
so that $T(z(t))$ with $z(t)$ defined in \eqref{eq:geo0} is the geodesic we seek. We then solve for $|T(z(t^*))|^2 = 1$ with $t^*>0$, the point at which that geodesic exists the domain $M$, and obtain
\begin{align*}
    z(t^*) = \frac{1}{1-\kappa}\ 2\cos\alpha. 
\end{align*}
In particular, 
\begin{align*}
    T(z(t^*)) = - \frac{(1 + \kappa)\cos\alpha + i (1-\kappa)\sin\alpha}{ (1+\kappa)\cos\alpha - i(1-\kappa)\sin\alpha} = e^{i\pi} e^{2i \arg((1+\kappa)\cos\alpha + i (1-\kappa)\sin\alpha)}. 
\end{align*}
The number inside the argument belongs to the right-half plane so that we may compute that
\begin{align*}
    T(z(t^*)) = \exp \left(i\left(\pi + 2\tan^{-1} \left( \frac{1-\kappa}{1+\kappa} \tan \alpha \right)\right)\right).
\end{align*}
In particular, in fan-beam coordinates, given $(\beta,\alpha)\in \partial S\Dm$, the scattering relation is given by 
\begin{align}
    {\cal S}(\beta,\alpha) = \left( \beta + \pi + 2 \tan^{-1} \left( \frac{1-\kappa}{1+\kappa} \tan \alpha \right), \pi-\alpha\right),
    \label{eq:SA}
\end{align}
recovering the Euclidean case \cite{Monard2015a} as $\kappa\to 0$, and becoming degenerate as $\kappa\to \pm 1$.

\subsubsection{Scattering signatures}

The function $\ss = \ss_\kappa$ defined as
\begin{align}
    \ss_{\kappa} (\alpha) := \tan^{-1} \left( \frac{1-\kappa}{1+\kappa} \tan \alpha\right)
    \label{eq:ss}
\end{align}
may be thought of as a 'scattering signature' of each geometry\F{, in that it is the only function that distinguishes two circularly symmetric scattering relations on the unit disk. The function $\ss_\kappa$ describes how, fixing the endpoint $z=1$ at the boundary, the other endpoint of a geodesic moves as the inward-pointing vector above $z=1$ changes. Strikingly (though this is inconsequential for what follows), we have $\ss_\kappa \circ \ss_{-\kappa} = id$ for all $\kappa\in (-1,1)$. This can be interpreted as the fact that the geodesic 'spread' at the boundary induced by negative curvature inside the disk can be undone by precisely changing the sign of the curvature.} 

As we will work with only one fixed value of $\kappa$ at a time, we may drop the subscript $\kappa$ for conciseness. 
The scattering relation $\SS$ and antipodal scattering relation $\SS_A$ (composition of $\SS$ with the antipodal map $\alpha\mapsto \alpha+\pi$) take the form  
\begin{align}
    {\cal S}(\beta,\alpha) = \left( \beta + \pi + 2 \ss(\alpha) , \pi-\alpha\right), \qquad {\cal S}_A(\beta,\alpha) = \left( \beta + \pi + 2 \ss(\alpha) , -\alpha\right). 
    \label{eq:SA0}
\end{align}
The map $\SS_A$ is a diffeomorphism of $\partial S\Dm$, and $\partial_\pm S\Dm$ are both $\SS_A$-stable. Since integrating a function does not depend on the direction of integration, the ray transform of a function is always invariant under the pullback $\SS_A^*$. For later, we record that the function $\ss(\alpha)$ satisfies the following obvious properties: 
\begin{align*}
    \ss(\alpha+\pi) = \ss(\alpha) + \pi, \qquad \ss(-\alpha) = -\ss(\alpha), \qquad \alpha\in \Sm^1.
\end{align*}

The jacobian of $\alpha\mapsto \ss(\alpha)$ takes the expression
\begin{align*}
    \ss'(\alpha) = \frac{1}{\lambda} + \frac{\lambda^2-1}{\lambda} \frac{1}{1+\lambda^2\tan^2 \alpha}, \qquad \lambda = \frac{1-\kappa}{1+\kappa} >1.  
\end{align*}
In particular, $\frac{1}{\lambda} \le \ss'(\alpha) \le \lambda$ for all $\alpha$ and $\ss'(\alpha)$ can be used as a multiplicative weight on $L^2 (\partial_+ SM, d\Sigma^2)$ spaces, that yields an equivalent $L^2$ topology. In the Euclidean case, $\ss(\alpha) = \alpha$, and therefore no distinction is necessary. In the work that follows, it will be crucial to work with $\alpha$, $\ss(\alpha)$ or a combination of both. To this end, we now describe some important relations between the two.   

\subsubsection{Linear fractional relation between $e^{2i\alpha}$ and $e^{2i\ss(\alpha)}$ and its consequences}

An important calculation is the following: with $\ss(\alpha) = \tan^{-1} (\lambda \tan \alpha)$, $\lambda := \frac{1-\kappa}{1+\kappa}$, we compute
\begin{align}
    e^{2i\ss(\alpha)} = \frac{1+ i\lambda \tan \alpha}{1-i\lambda\tan \alpha} &= \frac{(1+\lambda)e^{i\alpha} + (1-\lambda)e^{-i\alpha}}{(1-\lambda)e^{i\alpha} + (1+\lambda)e^{-i\alpha}} \qquad \left(i\tan\alpha = \frac{e^{i\alpha}-e^{-i\alpha}}{e^{i\alpha}+e^{-i\alpha}}\right) \nonumber \\
    &= \frac{e^{2i\alpha} +\kappa}{1+\kappa e^{2i\alpha}} \qquad \left( \frac{1-\lambda}{1+\lambda} = \kappa \right),\label{eq:LFT0}
\end{align}
or in short, 
\begin{align}
    e^{2i\ss(\alpha)} = \mat{1}{\kappa}{\kappa}{1} (e^{2i\alpha}) \qquad\leftrightarrow\qquad e^{2i\alpha} = \mat{1}{-\kappa}{-\kappa}{1} (e^{2i\ss(\alpha)}).
    \label{eq:LFT1}
\end{align}

The following Lemma will be crucial. Below we will say that a function $f(\alpha)$ is a holomorphic/strictly holomorphic/antiholomorphic/strictly antiholomorphic in $e^{i\alpha}$ if its Fourier expansion in $e^{i\alpha}$ only contains non-negative/positive/non-positive/negative powers of $e^{i\alpha}$. 

\begin{lemma} \label{lem:holo}
    For any $\kappa\in (-1,1)$, the function $e^{2i\ss(\alpha)}$ is a holomorphic, even series in $e^{i\alpha}$, with average $\kappa$. As a result, for any $q> 0$, $e^{2iq\ss(\alpha)}$ is a holomorphic, even series in $e^{i\alpha}$, and for $q<0$, $e^{2iq\ss(\alpha)}$ is an anti-holomorphic, even series in $e^{i\alpha}$. 
\end{lemma}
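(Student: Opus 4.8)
The plan is to read off both assertions directly from the linear fractional relation \eqref{eq:LFT1}. Set $w := e^{2i\alpha}$, which traverses the unit circle $\{|w|=1\}$ as $\alpha$ ranges over $\Sm^1$, and write $B(w) := \frac{w+\kappa}{1+\kappa w}$, so that \eqref{eq:LFT0} reads $e^{2i\ss(\alpha)} = B(w)$. The whole statement then reduces to understanding the Fourier-in-$e^{i\alpha}$ (equivalently, Laurent-in-$w$) expansion of $B$ and of its integer powers.

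For the first sentence, I would note that because $|\kappa|<1$ the only pole of $B$, at $w = -\kappa^{-1}$, satisfies $|-\kappa^{-1}| = |\kappa|^{-1} > 1$ and hence lies strictly outside the closed unit disk; equivalently, on $|w|=1$ one has $|\kappa w| = |\kappa| < 1$, so that $(1+\kappa w)^{-1} = \sum_{m\ge0}(-\kappa)^m w^m$ converges uniformly. Multiplying by $(w+\kappa)$ and collecting terms yields
\begin{align*}
    e^{2i\ss(\alpha)} = B(w) = \kappa + (1-\kappa^2)\sum_{n\ge1}(-\kappa)^{n-1} e^{2in\alpha},
\end{align*}
which is a series in non-negative even powers of $e^{i\alpha}$; thus $e^{2i\ss}$ is a holomorphic even series, and its $\alpha$-average equals the $w^0$-coefficient, namely $\kappa$.

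For the ``as a result'' clause, interpreted for integer $q$, I would simply raise to the power. For a positive integer $q$, $e^{2iq\ss(\alpha)} = B(w)^q$ is again holomorphic on a neighborhood of the closed disk (its only pole stays at $w = -\kappa^{-1}$), so on $|w|=1$ it is the boundary value of a function holomorphic inside the disk and, by Cauchy's theorem, all of its negative-index Laurent coefficients vanish; hence $B(w)^q$ is a power series in $w=e^{2i\alpha}$ with non-negative coefficients, i.e.\ a holomorphic even series in $e^{i\alpha}$. (Equivalently, a finite product of uniformly convergent series containing only non-negative powers of $w$ contains only non-negative powers of $w$.) For a negative integer $q$, the realness of $\ss$ gives $e^{2iq\ss(\alpha)} = \overline{e^{2i|q|\ss(\alpha)}}$, and since conjugation sends $e^{in\alpha}\mapsto e^{-in\alpha}$ it turns the holomorphic even series for $|q|$ into an anti-holomorphic even series, as claimed.

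There is no serious obstacle here: the crux is the elementary remark that $|\kappa|<1$ forces $B$ to extend holomorphically across the closed unit disk, after which the integer-power statements are immediate. The only points deserving a line of care are (i) phrasing ``no negative Laurent coefficients'' rigorously, which the holomorphic extension plus Cauchy's theorem (or uniform convergence of the geometric series) handles, and (ii) the scope of $q$: the argument by taking powers genuinely requires $q\in\Zm$, and indeed for non-integer exponents the zero of $B$ at $w=-\kappa$ inside the disk produces square-root branch points that destroy both holomorphicity and evenness; the half-integer exponents $e^{\pm i(n+1)\ss}$ appearing later are therefore renormalized by the Jacobian weight $\sqrt{\ss'(\alpha)}$ and treated in Lemma \ref{lem:sqrtjac} rather than here.
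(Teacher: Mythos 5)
Your proof is correct and essentially identical to the paper's: the paper likewise applies a geometric sum to \eqref{eq:LFT0} to obtain the explicit expansion $e^{2i\ss(\alpha)} = \kappa + (\kappa-\kappa^{-1})\sum_{p\ge 1}(-\kappa)^p e^{2ip\alpha}$ (which matches your formula, since $(\kappa-\kappa^{-1})(-\kappa)^p = (1-\kappa^2)(-\kappa)^{p-1}$), and then invokes the fact that products of holomorphic series are holomorphic for the integer-power consequences. Your additional observations — the conjugation argument for $q<0$, which the paper leaves implicit, and the remark that integer $q$ is genuinely needed because the interior zero at $w=-\kappa$ obstructs non-integer powers — are accurate and consistent with how the lemma is used downstream.
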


\begin{proof} Use a geometric sum in \F{\eqref{eq:LFT0}} to obtain
    \begin{align}
	e^{2i\ss(\alpha)} = \kappa + \left( \kappa -\kappa^{-1} \right) \sum_{p=1}^\infty (-\kappa)^p e^{2ip\alpha}.
	\label{eq:geom}
    \end{align}
    The other consequences follow from the fact that products of holomorphic series are holomorphic.
\end{proof}

The relation \eqref{eq:LFT1} also turns into a relation for the cosines: 
\begin{align*}
    e^{2i\ss(\alpha)} = \frac{e^{2i\alpha}+\kappa}{1+\kappa e^{2i\alpha}} = \frac{(e^{2i\alpha}+\kappa)(1 +\kappa e^{-2i\alpha})}{|1+\kappa e^{2i\alpha}|^2} = \frac{e^{2i\alpha} +2\kappa + \kappa^2 e^{-2i\alpha}}{1+\kappa^2 + 2\kappa \cos(2\alpha)}. 
\end{align*}
Taking the real part, we obtain
\begin{align*}
    \cos(2\ss(\alpha)) = \frac{(1+\kappa^2)\cos(2\alpha) +2\kappa}{1+\kappa^2 +2\kappa \cos(2\alpha)} = \mat{1+\kappa^2}{2\kappa}{2\kappa}{1+\kappa^2} (\cos(2\alpha)),    
\end{align*}
which inverts as 
\begin{align}
    \cos(2\alpha) = \mat{1+\kappa^2}{-2\kappa}{-2\kappa}{1+\kappa^2} (\cos(2\ss(\alpha))).
    \label{eq:cos2alpha}
\end{align}
Using these relations, one may derive useful representations of the jacobian $\ss'(\alpha)$: 
\begin{align}
    \ss'(\alpha) = \frac{1-\kappa^2}{1+\kappa^2 + 2\kappa \cos(2\alpha)} = \frac{1}{1-\kappa^2} (1+\kappa^2 -2\kappa \cos(2\ss(\alpha))).
    \label{eq:sprime}
\end{align}

\subsubsection{Relation between $e^{i\alpha}$ and $e^{i\ss(\alpha)}$}

While there is no obvious relation between $e^{i\alpha}$ and $e^{i\ss(\alpha)}$ (and it is unclear whether $e^{i\ss(\alpha)}$ is holomorphic in terms of $e^{i\alpha}$), some crucial relations are to be derived. A first one is that $\sqrt{\ss'}$ can be writen as an expression of both $e^{i\alpha}$ and $e^{i\ss(\alpha)}$. 

\begin{lemma}\label{lem:sqrtjac} With $\ss(\alpha) = \ss_\kappa(\alpha)$ as given in \eqref{eq:ss}, we have
    \begin{align}
	\sqrt{\ss'(\alpha)} = \frac{1}{\sqrt{1-\kappa^2}} e^{i\alpha} (e^{-i\ss(\alpha)} -\kappa e^{i\ss(\alpha)}).
	\label{eq:sqrtjac}
    \end{align}
\end{lemma}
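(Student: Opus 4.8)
The plan is to compute both sides independently and match them, with the only delicate point being a phase (sign/branch) determination. First I would rewrite the target jacobian in a form exposing its modulus. From \eqref{eq:sprime} together with the identity $|1+\kappa e^{2i\alpha}|^2 = 1+\kappa^2 + 2\kappa\cos(2\alpha)$, one gets $\ss'(\alpha) = (1-\kappa^2)/|1+\kappa e^{2i\alpha}|^2$, hence $\sqrt{\ss'(\alpha)} = \sqrt{1-\kappa^2}/|1+\kappa e^{2i\alpha}|$, a manifestly real positive quantity.

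Next I would simplify the right-hand side of \eqref{eq:sqrtjac}. Factoring out $e^{i(\alpha-\ss(\alpha))}$ gives $e^{i\alpha}(e^{-i\ss(\alpha)} - \kappa e^{i\ss(\alpha)}) = e^{i(\alpha - \ss(\alpha))}(1 - \kappa e^{2i\ss(\alpha)})$, and substituting the linear fractional relation \eqref{eq:LFT0} yields $1 - \kappa e^{2i\ss(\alpha)} = (1-\kappa^2)/(1+\kappa e^{2i\alpha})$. Thus the right-hand side equals $\sqrt{1-\kappa^2}\, e^{i(\alpha-\ss(\alpha))}/(1+\kappa e^{2i\alpha})$. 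Comparing with the expression for $\sqrt{\ss'}$ above, the lemma reduces to the single phase identity $e^{i(\alpha - \ss(\alpha))} = (1+\kappa e^{2i\alpha})/|1+\kappa e^{2i\alpha}|$.

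The main obstacle, and essentially the only nontrivial step, is establishing this phase identity with the correct branch. I would first verify it after squaring: using $e^{-2i\ss(\alpha)} = (1+\kappa e^{2i\alpha})/(e^{2i\alpha}+\kappa)$ from \eqref{eq:LFT0}--\eqref{eq:LFT1}, one checks $e^{2i(\alpha-\ss(\alpha))} = e^{2i\alpha}(1+\kappa e^{2i\alpha})/(e^{2i\alpha}+\kappa)$, which coincides with $(1+\kappa e^{2i\alpha})/(1+\kappa e^{-2i\alpha}) = \big((1+\kappa e^{2i\alpha})/|1+\kappa e^{2i\alpha}|\big)^2$ since $\kappa$ is real. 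To promote this to the unsquared identity I would argue by continuity: as $|\kappa|<1$, one has $\Re(1+\kappa e^{2i\alpha}) = 1 + \kappa\cos(2\alpha) \ge 1 - |\kappa| > 0$, so $1+\kappa e^{2i\alpha}$ stays in the open right half-plane and its phase is a continuous function of $\alpha$ valued in $(-\pi/2,\pi/2)$, while $\alpha - \ss(\alpha)$ is likewise continuous on its (connected) domain. Two continuous unit-modulus functions whose squares agree differ by a locally constant $\pm 1$; since both sides agree at $\alpha = 0$ (where $\ss(0)=0$ and $1+\kappa>0$), they agree everywhere.

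Finally I would assemble the pieces: substituting the phase identity into the simplified right-hand side gives $\sqrt{1-\kappa^2}\, e^{i(\alpha-\ss(\alpha))}/(1+\kappa e^{2i\alpha}) = \sqrt{1-\kappa^2}/|1+\kappa e^{2i\alpha}| = \sqrt{\ss'(\alpha)}$, which is exactly \eqref{eq:sqrtjac}.
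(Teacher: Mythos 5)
Your proof is correct and follows essentially the same route as the paper's: both hinge on the linear fractional relation \eqref{eq:LFT0}, and your squared phase identity is precisely the paper's computation that $f/\overline{f}=1$ for $f(\alpha)=e^{i\alpha}(e^{-i\ss(\alpha)}-\kappa e^{i\ss(\alpha)})$, while your modulus step matches \eqref{eq:ffbar}. The only distinction is how the sign of the square root is pinned down: the paper concludes from $f$ real and $f\overline{f}=(1-\kappa^2)\,\ss'$ that $f=\sqrt{1-\kappa^2}\sqrt{\ss'}$ without further comment, whereas your continuity argument on the connected circle together with the check at $\alpha=0$ makes that branch determination explicit — a point the paper's proof glosses over.
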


\begin{proof} Recall the formula
    \begin{align*}
	\ss'(\alpha) = \frac{1-\kappa^2}{(1+\kappa e^{2i\alpha})(1+\kappa e^{-2i\alpha})} = \frac{1}{1-\kappa^2} (1-\kappa e^{2i\ss(\alpha)})(1-\kappa e^{-2i\ss(\alpha)}).
    \end{align*}
    Define $f(\alpha) := e^{i\alpha} (e^{-i\ss(\alpha)} -\kappa e^{i\ss(\alpha)})$, then an immediate calculation shows that 
    \begin{align}
	f(\alpha) \overline{f(\alpha)} = (1-\kappa^2) \ss'(\alpha).
	\label{eq:ffbar}
    \end{align}
    Further, notice that 
    \begin{align*}
	\frac{f(\alpha)}{\overline{f(\alpha)}} = e^{2i\alpha} e^{-2i\ss(\alpha)} \frac{1-\kappa e^{2i\ss(\alpha)}}{1-\kappa e^{-2i\ss(\alpha)}} &= e^{2i\alpha} \mat{-\kappa}{1}{1}{-\kappa} (e^{2i\ss(\alpha)}) \\
	&= e^{2i\alpha} \mat{-\kappa}{1}{1}{-\kappa} \mat{1}{\kappa}{\kappa}{1} (e^{2i\alpha}) = \frac{e^{2i\alpha}}{e^{2i\alpha}} = 1.
    \end{align*}
    So $f$ is in fact real-valued, and using \eqref{eq:ffbar}, it is nothing but $\sqrt{1-\kappa^2} \sqrt{\ss'(\alpha)}$. 
\end{proof}

Multiplying \eqref{eq:sqrtjac} by $e^{-i\alpha}$ and identifying real and imaginary parts, we obtain relations for the sines and cosines:
\begin{align}
    \sqrt{\frac{1+\kappa}{1-\kappa}} \sqrt{\ss'(\alpha)} \cos \alpha = \cos(\ss(\alpha)), \qquad \sqrt{\frac{1-\kappa}{1+\kappa}} \sqrt{\ss'(\alpha)} \sin \alpha = \sin(\ss(\alpha)).
    \label{eq:important}
\end{align}

\section{Singular Value Decomposition of the boundary operators and moment conditions for $I_0$.} \label{sec:3}

Out of the scattering relation \eqref{eq:scatrel}, one defines operators \F{of extension from $\partial_+ SM$ to $\partial SM$ by evenness/oddness with respect to the scattering relation:} 
\begin{align*}
    A_\pm\colon L^2(\partial_+ SM, \mu\ d\Sigma^2) \to L^2(\partial SM, |\mu|\ d\Sigma^2), \quad A_\pm u(x,v) = \left\{
    \begin{array}{cc}
	u(x,v), & (x,v)\in \partial_+ SM \\
	\pm u(\SS(x,v)), & (x,v)\in \partial_- SM,
    \end{array}
    \right.
\end{align*}
with adjoints $A_\pm^* u(x,v) := u(x,v) \pm u(\SS(x,v))$ for $(x,v)\in \partial_+ SM$. For $(x,v)\in \partial SM$, the function $\mu$ is defined as $\mu(x,v) = g_x(v,\nu_x)$ with $\nu_x$ the unit inner normal to $x\in \partial M$, in particular in fan-beam coordinates, this is nothing but $\cos\alpha$. 

In the circularly symmetric case, since $\mu(\SS(x,v)) = -\mu(x,v)$, $A_\pm$ and $A_\pm^*$ are also adjoints of one another in the $L^2(\partial_+ S\Dm, d\Sigma^2) \to L^2(\partial S\Dm,d\Sigma^2)$ setting. In the smooth setting, as such extensions may generate singularities at the tangential directions, one must define, somewhat tautologically for now, 
\begin{align*}
    A_\pm &\colon C^\infty_{\alpha,\pm} (\partial_+ SM) \to C^\infty(\partial SM) \qquad \text{where}, \\
    C^\infty_{\alpha,\pm} (\partial_+ SM) &:= \{u\in C^\infty(\partial_+ SM),\ A_\pm u \in C^\infty(\partial SM)\},\end{align*}
see Appendix \ref{sec:spaces} for more detail, and for their further decompositions into spaces $C_{\alpha,\pm,\pm}^\infty(\partial_+ SM)$ in Eq. \eqref{eq:Calphaspaces}. We define the fiberwise Hilbert transform $H\colon C^\infty(\partial SM)\to C^\infty(\partial SM)$, defined in fan-beam coordinates as
\begin{align}
    Hu (\beta,\alpha) = \sum_{k\in \Zm} -i\text{sign}(k) u_k(\beta) e^{ik\alpha} \qquad \text{for}\qquad u = \sum_{k\in \Zm} u_k(\beta) e^{ik\alpha},
    \label{eq:Hilbert}
\end{align}
\F{with the convention that $\text{sign}(0) = 0$. Then} write $H = H_+ + H_-$, where $H_{+/-}$ is the restriction of $H$ onto even/odd Fourier modes. Out of these operators, we can then define two important operators
\begin{align}
    \begin{split}
	P_\pm &\colon C^\infty_{\alpha,+}(\partial_+ SM) \to C^\infty(\partial_+ SM), \qquad P_\pm := A_-^* H_\pm A_+, \\
	C_\pm &\colon C^\infty_{\alpha,-}(\partial_+ SM) \to C^\infty(\partial_+ SM), \qquad C_\pm := \frac{1}{2} A_-^* H_\pm A_-.
    \end{split}    
    \label{eq:PC}
\end{align}
One of the purposes of this section will be to compute the SVD's of $P_-$ and $C_-$ for the $L^2(\partial_+ SM, d\Sigma^2)\to L^2(\partial_+ SM, d\Sigma^2)$ topology. The relevance of these operators comes from the range characterization described in Proposition \ref{prop:PU}, which tell us that understanding the range of $I_0$ reduces to understanding the range of $P_-$ on $C_{\alpha,+,-}^\infty(\partial_+ SM)$. Moreover, understanding $C_-$ provides another range characterization for $I_0$, together with operators for projecting noisy data onto the range of $I_0$. 

In Section \ref{sec:spacescharac}, we first give a characterization of the spaces $C_{\alpha,\pm,\pm}^\infty(\partial_+ S\Dm)$ in terms of 'natural', distinguished bases. We then modify these bases in Section \ref{sec:SVD_PC} so as to construct the SVD's of $P_-$ and $C_-$. Finally in Section \ref{sec:range}, we then formulate the range characterizations of $I_0$, together with some consequences and applications.

\subsection{Description of the spaces $C_{\alpha,\pm,\pm}^\infty (\partial_+ S\Dm)$} \label{sec:spacescharac}

In cases where the scattering relation admits an explicit expression, we can construct bases for $C_{\alpha,\pm,\pm}^\infty(\partial_+ SM)$ defined in Eq. \eqref{eq:Calphaspaces} using appropriate Fourier series, ruling out some coefficients by symmetry arguments. Upon defining the family 
\begin{align}
    e_{p,\ell} (\beta,\alpha) := e^{i(p\beta + \ell \ss(\alpha))}, \qquad (\beta,\alpha)\in \partial SM, \qquad (p,\ell)\in \Zm^2,
    \label{eq:epl}
\end{align}
we can formulate the following
\begin{proposition} 
    In the models $(\Dm, g_\kappa)$, $\kappa\in (-1,1)$, the spaces $C_{\alpha,\pm}^\infty (\partial_+ S\Dm)$ are spanned\footnote{in the sense of expansions with rapid decay. \F{This decay is inherited from the rapid decay of Fourier series of smooth periodic functions, as in Eq. \eqref{eq:rapiddecay}}.} by: 
    \begin{align}
	C_{\alpha,+,+}^\infty (\partial_+ S\Dm) &= \left\langle e_{p,2q} + (-1)^p e_{p,2(p-q)}, \qquad p,q\in \Zm^2 \right\rangle, \label{eq:Cpp} \\
	C_{\alpha,+,-}^\infty (\partial_+ S\Dm) &= \left\langle e_{p,2q+1} - (-1)^p e_{p,2(p-q)-1}, \qquad p,q\in \Zm^2 \right\rangle, \label{eq:Cpm} \\
	C_{\alpha,-,+}^\infty (\partial_+ S\Dm) &= \left\langle e_{p,2q+1} + (-1)^p e_{p,2(p-q)-1}, \qquad p,q\in \Zm^2 \right\rangle, \label{eq:Cmp} \\
	C_{\alpha,-,-}^\infty (\partial_+ S\Dm) &= \left\langle e_{p,2q} - (-1)^p e_{p,2(p-q)}, \qquad p,q\in \Zm^2 \right\rangle. \label{eq:Cmm}	
    \end{align}    
\end{proposition}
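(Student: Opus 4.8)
The plan is to lift the problem from $\partial_+ S\Dm$ to the full boundary $\partial S\Dm$, where the scattering relation acts cleanly on the basis $\{e_{p,\ell}\}$ of \eqref{eq:epl}. Recall from \eqref{eq:Calphaspaces} that $C^\infty_{\alpha,\pm}(\partial_+ S\Dm)$ is, via the extension--restriction correspondence $u \leftrightarrow A_\pm u$ (which makes sense since $\SS$ interchanges $\partial_\pm S\Dm$), in bijection with the smooth functions $w$ on $\partial S\Dm$ satisfying $\SS^* w = \pm w$, while the second index records the parity of $w$ under the antipodal scattering relation, i.e. $\SS_A^* w = \pm w$. Thus it suffices to show that the four families in \eqref{eq:Cpp}--\eqref{eq:Cmm} are rapid-decay bases of the four joint eigenspaces of the commuting pair $(\SS^*, \SS_A^*)$ acting on $C^\infty(\partial S\Dm)$.

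First I would record that $\{e_{p,\ell}\}_{(p,\ell)\in\Zm^2}$ is a complete system with rapid decay for $C^\infty(\partial S\Dm)$: since $\ss$ is a smooth diffeomorphism of $\Sm^1$ with $\ss(\alpha+\pi)=\ss(\alpha)+\pi$, the map $(\beta,\alpha)\mapsto(\beta,\ss(\alpha))$ is a diffeomorphism of the torus $\Sm^1\times\Sm^1$, so $\{e_{p,\ell}\}$ is the pullback of the standard Fourier basis and smoothness is equivalent to rapid decay of the coefficients. Next, using $\ss(\pi-\alpha)=\pi-\ss(\alpha)$ and $\ss(-\alpha)=-\ss(\alpha)$ in \eqref{eq:SA0}, a one-line computation gives the action on the basis,
\begin{align*}
    \SS^* e_{p,\ell} = (-1)^{p+\ell}\, e_{p,2p-\ell}, \qquad \SS_A^* e_{p,\ell} = (-1)^{p}\, e_{p,2p-\ell}.
\end{align*}
In particular $\SS^*$ and $\SS_A^*$ are commuting involutions whose product is the fiberwise antipodal pullback $\A^* e_{p,\ell}=(-1)^\ell e_{p,\ell}$, so $\{\mathrm{id},\SS^*,\SS_A^*,\A^*\}$ is a Klein four-group and $C^\infty(\partial S\Dm)$ splits into four simultaneous eigenspaces.

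The decomposition then proceeds blockwise. Both involutions preserve each subspace $V_{p,\ell}:=\langle e_{p,\ell}, e_{p,2p-\ell}\rangle$ (two-dimensional generically, one-dimensional when $\ell=p$), swapping its two basis vectors up to sign; diagonalizing the two commuting $2\times2$ involutions on $V_{p,\ell}$ produces exactly the symmetric/antisymmetric combinations listed, once $\ell$ is split into even ($\ell=2q$) and odd ($\ell=2q+1$) cases to sort the $\A^*$-parity. Concretely, I would verify by direct substitution that each listed generator is a joint eigenvector with the advertised pair of signs: for \eqref{eq:Cpp} one finds $\SS^*(e_{p,2q}+(-1)^p e_{p,2(p-q)})=+(e_{p,2q}+(-1)^p e_{p,2(p-q)})$ and likewise $\SS_A^*$ fixes it, placing it in the $(+,+)$ eigenspace; the families \eqref{eq:Cpm}, \eqref{eq:Cmp}, \eqref{eq:Cmm} are checked identically and land in the $(+,-)$, $(-,+)$, $(-,-)$ eigenspaces respectively. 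Completeness of each family within its eigenspace follows because, as $(p,q)$ range over $\Zm^2$, the pairs $\{\ell,2p-\ell\}$ exhaust all indices; the degenerate diagonal terms $\ell=p$ are automatically accounted for, the relevant combination either doubling to a nonzero multiple of $e_{p,p}$ or vanishing, precisely according to whether $e_{p,p}$ lies in the eigenspace in question.

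The main obstacle is not the algebra, which is routine sign bookkeeping, but the identification invoked in the first step: one must know that smoothness of the even/odd extension $A_\pm u$ across the tangential directions $\alpha=\pm\pi/2$ corresponds exactly to the stated Fourier eigenspace condition on $\partial S\Dm$, together with the attendant rapid-decay statement. This is the technical content behind \eqref{eq:Calphaspaces} developed in Appendix \ref{sec:spaces}; granting it, the proposition reduces to the eigenvector verification above.
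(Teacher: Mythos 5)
Your proposal is correct and takes essentially the same approach as the paper: lift to $\partial S\Dm$ via $A_\pm$ (so that $C^\infty_{\alpha,\pm}$ corresponds to smooth $\SS^*$-even/odd functions), expand in the basis $e_{p,\ell}$ using the torus diffeomorphism $(\beta,\alpha)\mapsto(\beta,\ss(\alpha))$, and apply the symmetry formulas $\SS_A^* e_{p,\ell} = (-1)^p e_{p,2p-\ell}$, $\SS^* e_{p,\ell} = (-1)^{p+\ell} e_{p,2p-\ell}$ to sort and pair coefficients. Your Klein four-group/blockwise diagonalization is just a repackaging of the paper's coefficient relations, and the identification you flag as the ``main obstacle'' is in fact immediate from the definition of $C^\infty_{\alpha,\pm}$ (smoothness of $A_\pm u$) together with $\SS^*(A_\pm u) = \pm A_\pm u$ and $\SS_A^* A_\pm = A_\pm \SS_A^*$.
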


\begin{proof}
    Let $u\in C^\infty(\partial SM)$. \F{Since the function $u(\beta, \ss^{-1}(\alpha))$ is smooth on the torus $\partial SM = \Sm_\beta^1 \times \Sm_\alpha^1$, it can be written as a Fourier series 
    \begin{align*}
	u(\beta, \ss^{-1}(\alpha)) = \sum_{p,\ell \in \Zm} u_{p,\ell}\ e^{i(p\beta+\ell\alpha)},
    \end{align*}
    for some coefficients $\{u_{p,\ell}\}_{p,\ell}$ with {\em rapid decay} in the sense that 
    \begin{align}
	\sup_{p,\ell\in \Zm} \left\{ |u_{p,\ell}| (1+|p|)^a (1+|\ell|)^b \right\} <\infty, \qquad \forall a,b\in \Nm.
	\label{eq:rapiddecay}
    \end{align}
}
    This implies the following expression for $u$:
    \begin{align*}
	u(\beta,\alpha) = \sum_{p,\ell\in \Zm} u_{p,\ell}\ e^{i(p\beta+\ell\ss(\alpha))}.
    \end{align*}
    Upon looking at $e_{p,\ell}$ defined in \eqref{eq:epl}, we find that 
    \begin{align}
	\SS_A^* e_{p,\ell} = (-1)^p e_{p,2p-\ell}, \qquad \SS^* e_{p,\ell} = (-1)^{p+\ell} e_{p,2p-\ell},
	\label{eq:epl_sym}
    \end{align}
    so that 
    \begin{align*}
	\SS_A^* u = \sum_{p,\ell \in \Zm} (-1)^p u_{p,2p-\ell}\ e_{p,\ell}, \qquad \SS^* u = \sum_{p,\ell\in \Zm} (-1)^{p+\ell} u_{p,2p-\ell}\ e_{p,\ell}. 
    \end{align*}
    Now fix $\sigma_1 \in \{+,-\}$ and $\sigma_2 \in \{+,-\}$. If $w\in C_{\alpha,\sigma_1,\sigma_2}^\infty(\partial_+ S\Dm)$, then $u := A_{\sigma_1} w$ satisfies 
    \begin{align*}
	u = \sigma_2 \SS_A^* u = \sigma_1 \SS^* u.	
    \end{align*}
    At the level of the Fourier coefficients, this means    
    \begin{align*}
	u_{p,\ell} \stackrel{(\star)}{=} \sigma_2 (-1)^p u_{p,2p-\ell} \stackrel{(\star\star)}{=} \sigma_1 (-1)^{p+\ell} u_{p,2p-\ell}, \qquad (p,\ell) \in \Zm^2. 
    \end{align*}
    For $\sigma_1 = \sigma_2$, equality $(\star\star)$ forces $u_{p,\ell} = 0$ for all $\ell$ odd, and using equality $(\star)$ implies \eqref{eq:Cpp} and \eqref{eq:Cmm} upon writing $\ell = 2q$. For $\sigma_1\ne \sigma_2$, equality $(\star\star)$ forces $u_{p,\ell} = 0$ for all $\ell$ even, and equality $(\star)$ implies \eqref{eq:Cpm} and \eqref{eq:Cmp} upon writing $\ell = 2q+1$. 
\end{proof}

\subsection{Singular value decompositions of $P_-$ and $C_-$} \label{sec:SVD_PC}

Recall the definitions \eqref{eq:PC} of $P_-$ and $C_-$, where according to Appendix \ref{sec:spaces}, $P_-$ is naturally defined on $C_{\alpha,+,-}^\infty(\partial_+ S\Dm)$ and $C_-$ is naturally defined on $C_{\alpha,-,+}^\infty(\partial_+ S\Dm)$.

Functions which transform well under $P_-$ or $C_-$ must be nicely compatible with both the fiberwise Hilbert transform \eqref{eq:Hilbert} and the scattering relation \eqref{eq:scatrel}. The bases displayed in \eqref{eq:Cpm} and \eqref{eq:Cmp} do the latter but not the former. These are naturally orthogonal in $L^2(\partial S\Dm, \ss'(\alpha)\ d\Sigma^2)$, and to make them orthogonal in $L^2(\partial S\Dm, d\Sigma^2)$ (a space where $iH_{-}$ is naturally self-adjoint), a natural modification is to multiply these bases by $\sqrt{\ss'(\alpha)}$. Let us then define, for $p,q \in \Zm$, 
\begin{align}
    \phi'_{p,q} := \sqrt{\ss'}\ e_{p,2q+1}, \qquad (p,q)\in \Zm^2.
    \label{eq:phipq}
\end{align}
Combining \eqref{eq:epl_sym} with the fact that
\begin{align*}
    \ss'(\alpha) = \ss'(\alpha+\pi) = \ss'(-\alpha) = \ss'(\pi-\alpha), \qquad \text{i.e., } \quad \SS_A^* (\ss') = \SS^* (\ss') = \ss',
\end{align*}
we immediately obtain for every $(p,q)\in \Zm^2$,
\begin{align*}
    \SS_A^* \phi'_{p,q} &= \SS_A^* (\sqrt{\ss'})\ \SS_A^* e_{p,2q+1} = \sqrt{\ss'} (-1)^p e_{p,2p-2q-1} = (-1)^p \phi'_{p,p-q-1}, \\
    \SS^* \phi'_{p,q} &= \SS^* (\sqrt{\ss'})\ \SS^* e_{p,2q+1} = \sqrt{\ss'} (-1)^{p+2q+1} e_{p,2p-2q-1} = - (-1)^p \phi'_{p,p-q-1}.
\end{align*}

Regarding $\phi'_{p,q}$ as fiberwise odd functions on $\partial SM$, their fiberwise Hilbert transform can be computed, using in an important way the $\sqrt{\ss'}$ factor. 

\begin{lemma}\label{lem:Hilbphi}
    For all $(p,q) \in \Zm^2$, we have $H\phi'_{p,q} = H_- \phi'_{p,q} = -i\sgn{2q+1} \phi'_{p,q}$.    
\end{lemma}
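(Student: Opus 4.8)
The plan is to reduce the statement to the fiber variable $\alpha$ and then recognize $\phi'_{p,q}$ as an eigenvector of $H$ by deciding whether its $\alpha$-profile is strictly holomorphic or strictly antiholomorphic in $e^{i\alpha}$. Since $H$ from \eqref{eq:Hilbert} acts fiberwise (the factor $e^{ip\beta}$ in $\phi'_{p,q}$, cf. \eqref{eq:phipq} and \eqref{eq:epl}, simply passes through the coefficients $u_k(\beta)$), it suffices to analyze $g(\alpha) := \sqrt{\ss'(\alpha)}\,e^{i(2q+1)\ss(\alpha)}$. Recall that $H$ multiplies the mode $e^{ik\alpha}$ by $-i\sgn{k}$; hence $Hg = -ig$ whenever $g$ is strictly holomorphic in $e^{i\alpha}$ and $Hg = +ig$ whenever $g$ is strictly antiholomorphic, which is precisely the claimed eigenvalue $-i\sgn{2q+1}$ once the cases are matched. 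First I would substitute Lemma \ref{lem:sqrtjac} for $\sqrt{\ss'}$ and absorb the factor $e^{i(2q+1)\ss(\alpha)}$ to obtain the compact form
\begin{align*}
    g(\alpha) = \frac{1}{\sqrt{1-\kappa^2}}\,e^{i\alpha}\left( e^{2iq\ss(\alpha)} - \kappa\,e^{2i(q+1)\ss(\alpha)} \right).
\end{align*}
This already makes transparent that $g$ is supported on odd powers of $e^{i\alpha}$ (the bracket is an even series by Lemma \ref{lem:holo}, and the prefactor $e^{i\alpha}$ shifts parity), so that $Hg = H_-g$ as asserted.

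For $q \geq 0$ both exponents $2q$ and $2(q+1)$ are nonnegative, so by Lemma \ref{lem:holo} the bracket is a holomorphic even series in $e^{i\alpha}$; multiplying by $e^{i\alpha}$ makes $g$ strictly holomorphic (lowest power $\geq 1$), giving $Hg = -ig = -i\sgn{2q+1}\,g$. The delicate case is $q \leq -1$, where I want $g$ to be strictly antiholomorphic so that $Hg = +ig = -i\sgn{2q+1}\,g$. Here Lemma \ref{lem:holo} only guarantees that each exponential is antiholomorphic and even, i.e. the bracket $F_q := e^{2iq\ss(\alpha)} - \kappa\,e^{2i(q+1)\ss(\alpha)}$ has powers in $\{0,-2,-4,\dots\}$; after multiplication by $e^{i\alpha}$ the constant term of $F_q$ would contribute the \emph{positive} mode $e^{i\alpha}$, on which $H$ acts by $-i$ rather than $+i$. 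Thus the eigenvector property hinges entirely on the vanishing of the constant term (the $\alpha$-average) of $F_q$, and this is the main obstacle.

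To dispatch it, I would compute the averages $a_m := \frac{1}{2\pi}\int_{\Sm^1} e^{2im\ss(\alpha)}\,d\alpha$. For $m \geq 0$, writing $e^{2im\ss(\alpha)} = (e^{2i\ss(\alpha)})^m$ and using that $e^{2i\ss(\alpha)}$ is holomorphic with average $\kappa$ (Lemma \ref{lem:holo}, cf. \eqref{eq:geom}), the constant term of a product of holomorphic series is the product of the constant terms, so $a_m = \kappa^m$; the oddness $\ss(-\alpha) = -\ss(\alpha)$ gives $a_{-m} = a_m$, whence $a_m = \kappa^{|m|}$ for all $m$. For $q \leq -1$ the average of $F_q$ is then $a_q - \kappa\,a_{q+1} = \kappa^{-q} - \kappa\cdot\kappa^{-q-1} = 0$, so $F_q$ has powers $\leq -2$ and hence $g$ is strictly antiholomorphic. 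Combining the two cases yields $H\phi'_{p,q} = H_- \phi'_{p,q} = -i\sgn{2q+1}\,\phi'_{p,q}$ for all $(p,q) \in \Zm^2$.
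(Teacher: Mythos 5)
Your proof is correct, and its skeleton coincides with the paper's: reduce to the fiber, rewrite $\phi'_{p,q}$ via Lemma \ref{lem:sqrtjac} as $e^{i\alpha}$ times the bracket $e^{2iq\ss(\alpha)}-\kappa e^{2i(q+1)\ss(\alpha)}$, and apply Lemma \ref{lem:holo}; the case $q\ge 0$ is handled identically in both. Where you genuinely diverge is the case $q<0$, which is the crux of the lemma. The paper refactors the bracket as $\left(e^{-2i\ss(\alpha)}-\kappa\right)e^{2i(q+1)\ss(\alpha)}$ and observes, by conjugating the geometric series \eqref{eq:geom}, that $e^{i\alpha}\left(e^{-2i\ss(\alpha)}-\kappa\right)$ is \emph{strictly} antiholomorphic, so that multiplying by the (non-positively supported) factor $e^{2i(q+1)\ss(\alpha)}$ keeps everything in strictly negative modes. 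You instead attack the real danger head-on — the possible constant Fourier mode of the bracket, which after multiplication by $e^{i\alpha}$ would produce a positive mode and break the eigenvector property — and kill it by computing the averages $a_m=\frac{1}{2\pi}\int_{\Sm^1}e^{2im\ss(\alpha)}\,d\alpha=\kappa^{|m|}$, whence $a_q-\kappa a_{q+1}=0$ for $q\le -1$. The two arguments rest on the same cancellation (the average of $e^{\pm 2i\ss}$ being exactly $\kappa$, i.e. the "$-\kappa$" in Lemma \ref{lem:sqrtjac} is precisely tuned to remove the zero mode); the paper packages it as a factorization, while you make it explicit as a Fourier-coefficient identity. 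Your route is slightly more computational but has two virtues: it isolates clearly \emph{why} the lemma could fail and what saves it, and it yields the self-contained identity $a_m=\kappa^{|m|}$ (valid by holomorphy of powers of $e^{2i\ss}$ plus the oddness $\ss(-\alpha)=-\ss(\alpha)$), which is a clean piece of information about the scattering signature not stated in the paper.
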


\begin{proof} For $q\ge 0$, $\phi'_{p,q} = (1-\kappa^2)^{-1/2} e^{i\alpha} e^{ip\beta} (e^{2iq\ss(\alpha)} -\kappa e^{2i(q+1)\ss(\alpha)})$ is, by virtue of Lemma \ref{lem:holo}, $e^{i\alpha}$ times a fiber-holomorphic series, so it is strictly holomorphic and as such satisfies $H\phi'_{p,q} = -i \phi'_{p,q}$. 

    For $q<0$, we write $\phi'_{p,q} = (1-\kappa^2)^{-1/2} e^{ip\beta} e^{i\alpha} (e^{-2i\ss(\alpha)} - \kappa) e^{2i(q+1)\ss(\alpha)}$.
    By virtue of Lemma \ref{lem:holo} again, the last factor is antiholomorphic, while upon complex-conjugating \eqref{eq:geom},  
    \begin{align*}
	e^{i\alpha} (e^{-2i\ss(\alpha)} -\kappa) = (\kappa-\kappa^{-1}) \sum_{p=1}^\infty (-\kappa)^p e^{i(-2p+1)\alpha}, 
    \end{align*}
    is a strictly antiholomorphic series. The product is thus strictly antiholomorphic in $e^{i\alpha}$, therefore $H\phi'_{p,q} = i \phi'_{p,q}$. The formula follows. 
\end{proof}

Constructing functions with symmetries under $\SS_A^*$, we then define 
\begin{align*}
    u'_{p,q} := (id + \SS_A^*) \phi'_{p,q} = \phi'_{p,q} + (-1)^p \phi'_{p,p-q-1}, \\
    v'_{p,q} := (id - \SS_A^*) \phi'_{p,q} = \phi'_{p,q} + (-1)^p \phi'_{p,p-q-1}.
\end{align*}
Such bases have the natural redundancies 
\begin{align*}
    u'_{p,q} = (-1)^p u'_{p,p-q-1}, \qquad v'_{p,q} = -(-1)^p v'_{p,p-q-1}. 
\end{align*}
Upon removing these redundancies in the set of indices, we can rewrite \eqref{eq:Cpm} and \eqref{eq:Cmp} as
\begin{align*}
    C_{\alpha,-,+}^\infty(\partial_+ S\Dm) &= \left\langle u'_{p,q},\ p<2q+1 \right\rangle, \\ 
    C_{\alpha,+,-}^\infty(\partial_+ S\Dm) &= \left\langle v'_{p,q},\ p\le 2q+1 \right\rangle. 
\end{align*}
Finally, we note how the basis elements $\phi'_{p,q}$ transform under $id-\SS^*$: 
\begin{align*}
    (id-\SS^*) \phi'_{p,q} = u'_{p,q} , \qquad (id-\SS^*)(-1)^p \phi'_{p,p-q-1} = u'_{p,q}.
\end{align*}

Now, given the properties satisfied by $\phi'_{p,q}$, $u'_{p,q}$, $v'_{p,q}$, the action of $H_-$ and $\SS^*$ and $\SS_A^*$ on them are formally identical as in the Euclidean case, and the same calculation as in \cite[p. 444]{Monard2015a} allows to deduce that for any $(p,q)$ in the appropriate range, 
\begin{align}
    \begin{split}
	C_- u'_{p,q} &= \frac{-i}{2} (\sgn{2q+1} + \sgn{2p-2q-1}) u'_{p,q}, \\
	P_- v'_{p,q} &= -i (\sgn{2q+1} - \sgn{2p-2q-1}) u'_{p,q}. 
    \end{split}
    \label{eq:SVD_CmPm}	    
\end{align}
Since the families $\{u'_{p,q}\}$ and $\{v'_{p,q}\}$ are orthogonal in $L^2(\partial_+ S\Dm, d\Sigma^2)$, this automatically produces the singular value decompositions of $P_-$ and $C_-$, viewed as operators from that space into itself. The statements are identical to those of the Euclidean case made in \cite[Prop. 1 and 2]{Monard2015a} (except that the definitions of $u'_{p,q}$ and $v'_{p,q}$ differ from \cite{Monard2015a} by a fixed constant). Below we denote the orthogonal splitting 
\begin{align*}
    L^2(\partial_+ S\Dm, d\Sigma^2) = \V_+ \oplus \V_-, \qquad \V_\pm := L^2(\partial_+ S\Dm, d\Sigma^2) \cap \ker (id\mp \SS_A^*).    
\end{align*}

\begin{theorem} \label{thm:SVD_PC} Given $\kappa\in (-1,1)$, let $\Dm$ be the unit disk equipped with the metric $g_\kappa$ \eqref{eq:metric} and define $P_-, C_-$ as in \eqref{eq:PC}. 
    The SVD of the operator $P_-\colon \V_- \to \V_+$ is given by: for any $(p,q)\in \Zm^2$ with $p<2q+1$,
    \begin{align*}
	P_- v'_{p,q} = \left\{
	\begin{array}{cc}
	    -2i u'_{p,q} & \text{if } q>\frac{-1}{2} \text{ and } p<q+\frac{1}{2}, \\
	    0 & \text{otherwise}.
	\end{array}
	\right.
    \end{align*}
    The eigendecomposition of $C_-\colon \V_+ \to \V_+$ is given by: for any $(p,q)\in \Zm^2$ with $p<2q$,  
    \begin{align*}
	C_- u'_{p,q} = \left\{
	\begin{array}{cc}
	    i\ u'_{p,q}, & \text{if } q<\frac{-1}{2} \text{ and } p<q+\frac{1}{2}, \\
	    -i\ u'_{p,q}, & \text{if } q>\frac{-1}{2} \text{ and } p>q+\frac{1}{2}, \\
	    0 & \text{otherwise}.
	\end{array}
	\right.
    \end{align*}
\end{theorem}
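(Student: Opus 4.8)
The plan is to treat Theorem \ref{thm:SVD_PC} as a bookkeeping consequence of the action formulas \eqref{eq:SVD_CmPm}, which I take as established (they follow from Lemma \ref{lem:Hilbphi} together with the transformation rules of $\phi'_{p,q}$, $u'_{p,q}$, $v'_{p,q}$ under $H_-$, $\SS^*$ and $\SS_A^*$, exactly as in the Euclidean computation). Since the families $\{u'_{p,q}\}$ and $\{v'_{p,q}\}$ are orthogonal in $L^2(\partial_+ S\Dm, d\Sigma^2)$ and, with the stated index conventions, complete in $\V_+$ and $\V_-$ respectively, it suffices to evaluate the scalar coefficients in \eqref{eq:SVD_CmPm} explicitly as functions of $(p,q)$ and then to normalize to read off singular values and eigenvalues. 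The observation that makes this clean is that $2q+1$ and $2p-2q-1$ are odd integers, hence never zero, so each sign equals $\pm 1$ according to an elementary inequality: $\sgn{2q+1}=+1 \iff q\ge 0 \iff q>-\tfrac12$, and $\sgn{2p-2q-1}=+1 \iff p\ge q+1 \iff p>q+\tfrac12$.

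Substituting these into \eqref{eq:SVD_CmPm} reduces each formula to four sign-regimes. For $P_-$, the coefficient $-i(\sgn{2q+1}-\sgn{2p-2q-1})$ equals $-2i$ precisely when $q>-\tfrac12$ and $p<q+\tfrac12$, equals $+2i$ when $q<-\tfrac12$ and $p>q+\tfrac12$, and vanishes otherwise. The key point is a vacuity argument: the $+2i$ regime requires $q\le -1$ together with $p\ge q+1$, but the index set for $v'_{p,q}$ obeys $p\le 2q+1$, and $q+1\le p\le 2q+1$ forces $q\ge 0$, contradicting $q\le -1$; hence within the admissible range only the $-2i$ branch and the vanishing branch survive, giving the stated dichotomy. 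For $C_-$, the coefficient $-\tfrac{i}{2}(\sgn{2q+1}+\sgn{2p-2q-1})$ equals $-i$ when both signs are $+1$ (i.e. $q>-\tfrac12$ and $p>q+\tfrac12$), equals $+i$ when both are $-1$ (i.e. $q<-\tfrac12$ and $p<q+\tfrac12$), and vanishes on the two mixed regimes; here all branches are realized within the index range, reproducing the three-way split. I would then double-check the boundary indices, in particular the diagonal $p=2q+1$ where $u'_{p,q}=0$ via the redundancy $u'_{p,q}=(-1)^p u'_{p,p-q-1}$, to confirm no eigenvector is omitted or double-counted.

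Finally, because each basis vector is sent to a scalar multiple of the basis vector carrying the \emph{same} index $(p,q)$, and because the target vectors $\{u'_{p,q}\}$ are mutually orthogonal, normalizing the input and output families turns \eqref{eq:SVD_CmPm} into genuine singular triples: for $P_-\colon \V_-\to\V_+$ the nonzero singular value attached to each admissible $(p,q)$ in the $-2i$ regime is $2\,\|v'_{p,q}\|^{-1}\|u'_{p,q}\|$ (the phase $-2i$ contributing only a unit-modulus factor to the output direction), while the remaining $v'_{p,q}$ span $\ker P_-$; for $C_-\colon\V_+\to\V_+$ the $u'_{p,q}$ are already eigenvectors, so the displayed formula is directly the spectral decomposition. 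I expect the only genuinely delicate step to be the index bookkeeping of the second paragraph — ensuring the chosen fundamental domain for $(p,q)$, which removes both the redundancy $u'_{p,q}=(-1)^p u'_{p,p-q-1}$ and the zero diagonal, is consistent with the inequalities cutting out each sign-regime — everything else being the routine evaluation of two sign functions and an appeal to the orthogonality and completeness already recorded.
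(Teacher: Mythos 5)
Your proposal is correct and follows essentially the same route as the paper: it establishes \eqref{eq:SVD_CmPm} from Lemma \ref{lem:Hilbphi} together with the $\SS^*$ and $\SS_A^*$ transformation rules (the paper defers this step to the Euclidean computation of \cite[p.~444]{Monard2015a}), and then combines orthogonality of the families $\{u'_{p,q}\}$, $\{v'_{p,q}\}$ with an evaluation of the two sign factors. Your explicit bookkeeping --- in particular the observation that the $+2i$ regime of $P_-$ is vacuous within the index range $p\le 2q+1$, and the check of the degenerate diagonal $p=2q+1$ where $u'_{p,q}=0$ --- correctly fills in the case analysis that the paper leaves implicit by asserting the statements are identical to \cite[Prop.~1 and 2]{Monard2015a}.
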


\subsection{Consequences of Theorem \ref{thm:SVD_PC}: range characterizations of $I_0$ and a projection operator} \label{sec:range}

With all the facts collected in the previous sections, we can now prove Theorem \ref{thm:range}. 

\begin{proof}[Proof of Theorem \ref{thm:range}]
    '(1) $\iff$ (2)' is Proposition \ref{prop:PU}. 

    '(2) $\implies$ (3)' comes from the fact that $C_- P_- = 0$ as readily seen from \eqref{eq:SVD_CmPm}, and '(3) $\implies$ (2)' comes from the fact that $C_-$ has zero kernel on $(\text{Ran }P_-)^\perp$ (as a subspace of $\V_+$). 

    '(3) $\iff$ (4)' is a characterization by orthogonality of $(\ker C_-)^\perp = (\text{Ran } P_-)^\perp$. The formulation in terms of functions $\psi^\kappa_{n,k}$ is obtained through the re-indexing \eqref{eq:reindex} performed in the next sections. 
\end{proof}

\paragraph{Projection of noisy data onto the range of $I_0$.}

In addition, for purposes of projection of noisy data onto the range of $I_0$, an immediate consequence of Theorem \ref{thm:SVD_PC} is the following :

\begin{theorem}\label{thm:projection}
    Let $\Dm$ be equipped with the metric $g_\kappa$ for $\kappa\in (-1,1)$ fixed, and define $C_-$ as in \eqref{eq:PC}. Then the operator $id + C_-^2$ is the $L^2(\partial_+ S\Dm, d\Sigma^2)$ orthogonal projection operator onto the range of $I_0$.   
\end{theorem}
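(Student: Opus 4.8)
The plan is to show that $id + C_-^2$ is a self-adjoint idempotent whose range is exactly the range of $I_0$, using the eigendecomposition of $C_-$ established in Theorem \ref{thm:SVD_PC}. The key observation is that $C_-$ acts on $\V_+$ diagonally in the orthonormal basis furnished by the (normalized) functions $u'_{p,q}$, with eigenvalues drawn from the set $\{+i, -i, 0\}$. On the eigenspaces where the eigenvalue is $\pm i$, one has $C_-^2 u'_{p,q} = (\pm i)^2 u'_{p,q} = -u'_{p,q}$, so that $(id+C_-^2) u'_{p,q} = 0$; on the eigenspace where the eigenvalue is $0$, one has $C_-^2 u'_{p,q} = 0$, so that $(id+C_-^2) u'_{p,q} = u'_{p,q}$. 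Hence $id+C_-^2$ acts as the identity on $\ker C_-$ and as zero on $(\ker C_-)^\perp$, which is precisely the spectral description of the orthogonal projection onto $\ker C_-$.

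The next step is to verify the three defining properties of an orthogonal projector. Self-adjointness of $id+C_-^2$ on $L^2(\partial_+ S\Dm, d\Sigma^2)$ follows because $iH_-$ is self-adjoint on $L^2(\partial SM, d\Sigma^2)$ and $A_-$, $A_-^*$ are mutual adjoints in the $L^2(\partial_+ S\Dm, d\Sigma^2)$ setting (as noted in the discussion preceding \eqref{eq:PC}), so $C_- = \tfrac12 A_-^* H_- A_-$ satisfies $C_-^* = -C_-$ (it is skew-adjoint, consistent with its purely imaginary spectrum), whence $(C_-^2)^* = (C_-^*)^2 = C_-^2$ and $id+C_-^2$ is self-adjoint. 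Idempotency, $(id+C_-^2)^2 = id+C_-^2$, reduces to the polynomial identity $C_-^4 + C_-^2 = 0$, i.e. $C_-^2(C_-^2+id) = 0$, which is immediate from the eigenvalue analysis above since each $C_-^2$-eigenvalue lies in $\{0,-1\}$. Together these two facts identify $id+C_-^2$ as the orthogonal projection onto its range, which by the spectral computation equals $\ker C_-$ (extended by the identity on $\V_-$, where $C_-$ is not defined; one must be careful to specify that $C_-$ is taken as zero, or that the ambient space is split as $\V_+\oplus\V_-$ and the relevant statement concerns the $\SS_A^*$-invariant component carrying the range of $I_0$).

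Finally, I would invoke the equivalence '(1) $\iff$ (3)' from Theorem \ref{thm:range}, which states that a function $u$ with $\SS_A^* u = u$ lies in the range of $I_0$ if and only if $C_- u = 0$. Thus $\ker C_- \cap \V_+$ coincides with $\mathrm{Ran}\,I_0$ inside $\V_+$, and the orthogonal projection onto $\ker C_-$ is the orthogonal projection onto $\mathrm{Ran}\,I_0$. The main obstacle I anticipate is bookkeeping rather than conceptual: one must handle the domain subtleties carefully, since $C_-$ is a priori only densely defined on the smooth space $C_{\alpha,-,+}^\infty(\partial_+ S\Dm)$ and its $L^2$-closure/boundedness must be justified so that $id+C_-^2$ makes sense as a genuine bounded operator on all of $L^2(\partial_+ S\Dm, d\Sigma^2)$; the eigenvalues $\{0,\pm i\}$ being bounded guarantees $C_-$ extends boundedly, but one should state this extension explicitly and confirm that the range of $I_0$ (appropriately closed) is the closed subspace onto which the projector lands.
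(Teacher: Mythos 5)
Your proposal is correct and takes essentially the same route as the paper: the paper's proof is exactly the diagonal computation from Theorem \ref{thm:SVD_PC}, namely $(id+C_-^2)u'_{p,q}=0$ on the $\pm i$-eigenspaces of $C_-$ and $(id+C_-^2)u'_{p,q}=u'_{p,q}$ on $\ker C_-$, with the identification of $\ker C_-$ inside $\V_+$ with the (closed) range of $I_0$ coming from Theorem \ref{thm:range} and \eqref{eq:RanI0}. The extra verifications you supply (skew-adjointness of $C_-$, idempotency, boundedness of the $L^2$-extension, and the caveat that on $\V_-$ the operator acts as the identity, so the statement is really about the $\SS_A^*$-symmetric component) are left implicit in the paper but are consistent with, and if anything sharpen, its reasoning.
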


\begin{proof} Following Theorem \ref{thm:SVD_PC}, a direct computation at the level of the eigenvectors gives:
    \begin{align*}
	(id + C_-^2) u'_{p,q} = \left\{
	    \begin{array}{cc}
		0, & \text{if } q<\frac{-1}{2} \text{ and } p<q+\frac{1}{2}, \\
		0, & \text{if } q>\frac{-1}{2} \text{ and } p>q+\frac{1}{2}, \\
		u'_{p,q} & \text{otherwise}.
	    \end{array}
	    \right.
	\end{align*}    
\end{proof}

\section{Singular Value Decomposition of the X-ray transform} \label{sec:4}

A conclusion of Theorem \ref{thm:range} is that the range of $I_0$ is spanned by 
\begin{align}
    \{u'_{p,q},\ q>-1/2,\ q>p-1/2\},
    \label{eq:RanI0}
\end{align}
an orthogonal family in $\V_+$. In what follows, the goal is to apply an appropriate adjoint for $I_0$ to the family \eqref{eq:RanI0}, and find a topology for which the functions obtained are orthogonal. Most adjoints for $I_0$ are constructed out of a distinguished one which we denote $I_0^\sharp$: it corresponds to the adjoint of $I_0 \colon L^2(\Dm, dVol_\kappa)\to L^2(\partial_+ S\Dm,\ \mu\ d\Sigma^2)$, which in our setting takes the expression
\begin{align}
    I_0^\sharp g (z) = \int_{\Sm^1} g(\beta_-(z,\theta),\alpha_-(z,\theta))\ d\theta, \qquad z\in \Dm,
    \label{eq:adjoint}
\end{align}
where $(\beta_-,\alpha_-)(z,\theta)$ are the fan-beam coordinates of the unique $g_\kappa$-geodesic passing through $(z,\theta)\in S\Dm$, or 'footpoint map'.

In what follows, we will first recall in Section \ref{sec:SVDE} what is known in the Euclidean case, before showing that combining this knowledge with our previous derivations ultimately allows to produce the SVD of the X-ray transform in Section \ref{sec:SVD2}. Proofs of some intermediary lemmas are relegated to Section \ref{sec:thetap}.

\subsection{Euclidean case - Zernike polynomials} \label{sec:SVDE}

It may be convenient to reparameterize the set \eqref{eq:RanI0} to make the Zernike basis appear, in the form that it is presented in \cite{Kazantsev2004}. Specifically, for $n\in \Nm$ and $k\in \Zm$, we reparameterize the basis of $\V_+$ as $\psi_{n,k} := \frac{(-1)^n}{4\pi} u'_{n-2k,n-k}$ instead, i.e. we have involved the change of index
\begin{align}
    (n,k)\mapsto (p,q) = (n-2k, n-k), \qquad n\in \Nm_0,\ k\in \Zm.
    \label{eq:reindex}
\end{align}
Then an immediate calculation yields
\begin{align}
    \psi_{n,k} := \frac{(-1)^n}{4\pi} e^{i(n-2k)(\beta+\alpha)} (e^{i(n+1)\alpha} + (-1)^n e^{-i(n+1)\alpha}), \qquad n\ge 0, \quad k\in \Zm,
    \label{eq:psinkeucl}
\end{align}
and we now want to compute $I_0^\sharp \left[ \frac{\psi_{n,k}}{\mu} \right]$. Together with the definition of $I_0^\sharp$ and the relations satisfied by the Euclidean footpoint map for all $(\rho e^{i\omega},\theta)\in S\Dm$:
\begin{align*}
    \beta_-(\rho e^{i\omega}, \theta) &+ \alpha_-(\rho e^{i\omega},\theta) + \pi = \theta, \\
    \beta_-\left( \rho e^{i\omega}, \theta \right) &= \beta_- (\rho,\theta-\omega)+\omega, \qquad \alpha_{-}(\rho e^{i\omega}, \theta) = \alpha_- (\rho,\theta-\omega), 
\end{align*}
we arrive at the expression 
\begin{align*}
    I_0^\sharp \left[ \frac{\psi_{n,k}}{\mu} \right] (\rho e^{i\omega}) = e^{i(n-2k)\omega} \frac{1}{2\pi} \int_{\Sm^1} e^{i(n-2k)\theta} \frac{e^{i(n+1)\alpha_-(\rho,\theta)}+(-1)^n e^{-i(n+1)\alpha_-(\rho,\theta)}}{2\cos\alpha_-(\rho,\theta)}\ d\theta.
\end{align*}
With the relation $\sin \alpha_-(\rho,\theta) = - \rho\sin\theta$, we may rewrite this as
\begin{align}
    I_0^\sharp \left[ \frac{\psi_{n,k}}{\mu} \right](\rho e^{i\omega}) = \frac{e^{i(n-2k)\omega}}{2\pi} \int_{\Sm^1} e^{i(n-2k)\theta} W_n(-\rho\sin\theta)\ d\theta, 
    \label{eq:Znk}    
\end{align}
where we have defined 
\begin{align}
    W_n(\sin\alpha) := \frac{e^{i(n+1)\alpha}+(-1)^n e^{-i(n+1)\alpha}}{2\cos\alpha}. \label{eq:Wn}
\end{align}
The functions $W_n$ are related to the Chebychev polynomials of the second kind $U_n$, specifically through the relation $W_n(t) = i^n U_n(t)$. In particular, it is immediate to check the 2-step recursion relation and initial conditions
\begin{align*}
    W_{n+1}(t) = 2it W_n(t) + W_{n-1}(t), \qquad W_0(t) = 1, \qquad W_1 (t) = 2it.
\end{align*}
By induction, the top-degree term of $W_n$ is $(2it)^n$. Fixing $n\ge 0$, we now split the calculation into two cases:

\paragraph{Case $k<0$ or $k>n$.} In light of \eqref{eq:Znk}, since $W_n$ is a polynomial of degree $n$, then $W_n \left( - \rho\sin \theta \right)$ is a trigonometric polynomial of degree $n$ in $e^{i\theta}$. In particular, if $k<0$ or $k>n$, then $|n-2k|>n$ and thus the right hand side of \eqref{eq:Znk} is identically zero. In short, we deduce 
\begin{align*}
    I_0^\sharp \left[\frac{\psi_{n,k}}{\mu}\right] = 0, \qquad n\ge 0, \qquad k<0 \text{ or } k>n.
\end{align*}

\paragraph{Case $0\le k\le n$.} For the remaining cases, we then define $Z_{n,k}:= I_0^\sharp \left[\frac{\psi_{n,k}}{\cos\alpha}\right]$, and for the sake of self-containment, we now show that the functions $\{Z_{n,k}\}_{n\ge 0,\ 0\le k\le n}$ so constructed are the Zernike basis in the convention of \cite{Kazantsev2004}, by showing that they satisfy Cauchy-Riemann systems and take the same boundary values. 

\begin{lemma} \label{lem:Znk}
    The functions $\{Z_{n,k}\}_{n\ge 0,\ 0\le k\le n}$ satisfy the following properties: For all $n\ge 0$
    \begin{align}
	\partial_{\zbar} Z_{n,0} &= 0, \qquad \partial_z Z_{n,k} + \partial_{\zbar} Z_{n,k+1} = 0\qquad (0\le k\le n-1), \qquad \partial_{z} Z_{n,n} = 0, \label{eq:Znkprop1} \\
	Z_{n,k} (e^{i\omega}) &= (-1)^k e^{i(n-2k)\omega}, \qquad 0\le k\le n,\quad \omega \in \Sm^1. \label{eq:Znkprop2}
    \end{align}        
\end{lemma}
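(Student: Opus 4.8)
The plan is to exploit the integral representation \eqref{eq:Znk} after a change of variable that renders the argument of $W_n$ \emph{affine} in the Cartesian coordinates of $z$. Writing $z = x+iy$ and substituting $\psi = \theta+\omega$ in \eqref{eq:Znk}, the phase $e^{i(n-2k)\omega}$ is absorbed into the integral and, using $\rho\sin(\psi-\omega) = x\sin\psi - y\cos\psi$, one lands on
\begin{align*}
    Z_{n,k}(z) = \frac{1}{2\pi}\int_{\Sm^1} e^{i(n-2k)\psi}\, W_n(t)\, d\psi, \qquad t := -x\sin\psi + y\cos\psi.
\end{align*}
Since $t$ is affine in $(x,y)$, the Wirtinger derivatives pass under the integral sign as $\partial_z W_n(t) = -\tfrac{i}{2} e^{-i\psi} W_n'(t)$ and $\partial_{\zbar} W_n(t) = \tfrac{i}{2} e^{i\psi} W_n'(t)$, and every assertion of the lemma becomes a one-line statement about Fourier coefficients in $\psi$.

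For the interior relations I would differentiate directly: both $\partial_z Z_{n,k}$ and $\partial_{\zbar} Z_{n,k+1}$ reduce to a scalar multiple of $\int_{\Sm^1} e^{i(n-2k-1)\psi} W_n'(t)\, d\psi$, with opposite signs, so their sum vanishes identically for every $k$, giving the middle identity in \eqref{eq:Znkprop1}. For the two endpoint identities, $\partial_{\zbar} Z_{n,0}$ and $\partial_z Z_{n,n}$ reduce respectively to multiples of $\int_{\Sm^1} e^{\pm i(n+1)\psi} W_n'(t)\, d\psi$. The key observation here is a frequency count: $W_n'$ is a polynomial of degree $n-1$ and $t$ is a trigonometric polynomial of degree one in $\psi$, so $W_n'(t)$ has Fourier support contained in $\{-(n-1),\dots,n-1\}$; multiplying by $e^{\pm i(n+1)\psi}$ shifts this support entirely off the zero frequency, so both integrals vanish. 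This yields $\partial_{\zbar} Z_{n,0} = 0$ and $\partial_z Z_{n,n} = 0$.

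For the boundary values \eqref{eq:Znkprop2}, I would set $\rho = 1$ in \eqref{eq:Znk}, reducing the claim to the identity $\frac{1}{2\pi}\int_{\Sm^1} e^{i(n-2k)\theta} W_n(-\sin\theta)\, d\theta = (-1)^k$ for $0\le k\le n$. Evaluating \eqref{eq:Wn} at $\alpha = -\theta$ records the closed form $W_n(-\sin\theta) = \big((-1)^n e^{i(n+1)\theta} + e^{-i(n+1)\theta}\big)/(2\cos\theta)$. Setting $w = e^{i\theta}$ and clearing the denominator $w+w^{-1}$ by a cyclotomic/geometric-series factorization (with a short case split on the parity of $n$), this collapses in both cases to the single Laurent polynomial $W_n(-\sin\theta) = \sum_{l=0}^n (-1)^l\, w^{2l-n}$. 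The integral then merely extracts the coefficient of $w^{2k-n}$, which is $(-1)^k$, completing the proof.

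The computations are elementary; the only genuine subtlety is recognizing that the substitution $\psi = \theta+\omega$ turns the argument of $W_n$ into an affine function of $(x,y)$, which is what trivializes the Wirtinger derivatives and reduces each statement to a frequency-support or Fourier-coefficient count. The parity case split in the final factorization is the one place requiring care, but since both parities yield the identical Laurent polynomial, no distinction propagates into the coefficient extraction.
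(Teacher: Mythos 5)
Your proof is correct, and while the middle Cauchy--Riemann identity is handled essentially as in the paper (differentiation under the integral sign after a change of fiber variable that makes the argument of $W_n$ affine in $(x,y)$ --- your representation is the paper's \eqref{eq:Zee} up to the sign convention $W_n(-t)=(-1)^n W_n(t)$), your treatment of the two endpoint equations and of the boundary values takes a genuinely different route. For $\partial_{\zbar} Z_{n,0} = 0$ and $\partial_z Z_{n,n} = 0$, the paper computes $Z_{n,0} = z^n$ explicitly (only the top-degree term $(2it)^n$ of $W_n$ has harmonic content along $e^{in\theta}$, and $\int_{\Sm^1}(e^{2i\theta}-1)^n\,d\theta = 2\pi(-1)^n$) and then invokes the conjugation symmetry $Z_{n,n} = (-1)^n\overline{Z_{n,0}}$, whereas you dispose of both at once by a frequency-support count: $W_n'(t)$ has Fourier support in $\{-(n-1),\dots,n-1\}$, so multiplication by $e^{\pm i(n+1)\psi}$ pushes the support entirely off frequency zero. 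For the boundary values \eqref{eq:Znkprop2}, the paper proceeds by induction on $n$, using the Chebyshev recursion to derive \eqref{eq:claimZ}, with base cases $k=0$ and $k=n$ supplied by the explicit formulas above; you instead derive the closed Laurent expansion $W_n(-\sin\theta) = \sum_{l=0}^n (-1)^l e^{i(2l-n)\theta}$ and read off the relevant Fourier coefficient directly, with no induction at all. Your route is more economical and self-contained; the paper's buys the explicit identification $Z_{n,0} = z^n$, $Z_{n,n} = (-1)^n \zbar^n$, which anchors the family to the Zernike convention of \cite{Kazantsev2004} and is then recycled as the induction's base case. One small remark: the parity case split you mention in the final factorization is unnecessary --- the identity $(w+w^{-1})\sum_{l=0}^n(-1)^l w^{2l-n} = (-1)^n w^{n+1} + w^{-n-1}$ telescopes uniformly in $n$, so no case distinction ever arises.
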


\begin{proof} Using the relation $W_n(-t) = (-1)^n W_n(t)$, we arrive at the expression
    \begin{align}
	Z_{n,k}(\rho e^{i\omega}) &= e^{i(n-2k)\omega} \frac{(-1)^n}{2\pi}\int_{\Sm^1} e^{i(n-2k)\theta} W_n(\rho\sin\theta)\ d\theta \nonumber \\
	&= \frac{(-1)^n}{2\pi} \int_{\Sm^1} e^{i(n-2k)\theta} W_n(\rho\sin(\theta-\omega))\ d\theta. \label{eq:Zee}
    \end{align}
    
    With $\partial_z = \frac{e^{-i\omega}}{2} (\partial_\rho - \frac{i}{\rho}\partial_\omega)$ and $\partial_{\bar z} = \frac{e^{i\omega}}{2} (\partial_\rho + \frac{i}{\rho}\partial_\omega)$, we compute 
    \begin{align*}
	\partial_z (\rho\sin(\theta-\omega)) = i\frac{e^{-i\theta}}{2}, \qquad \partial_{\bar{z}} (\rho\sin(\theta-\omega)) = -i \frac{e^{i\theta}}{2}.
    \end{align*}
    Plugging these into \eqref{eq:Zee} immediately implies 
    \begin{align}
	\partial_z Z_{n,k} + \partial_{\zbar} Z_{n,k+1} &= 0, \qquad 0\le k\le n-1.
	\label{eq:CR}
    \end{align}
    In addition, we compute
    \begin{align*}
	Z_{n,0} (\rho e^{i\omega}) &= e^{in\omega} \frac{(-1)^n}{2\pi} \int_{\Sm^1} e^{in\theta} W_n(\rho\sin\theta)\ d\theta \\
	&= e^{in\omega} \frac{(-1)^n}{2\pi}  \int_{\Sm^1} e^{in\theta} (2i\rho\sin\theta)^n\ d\theta \\
	&= \rho^n e^{in\omega}  \frac{(-1)^n}{2\pi} \int_{\Sm^1} e^{in\theta} (2i\sin\theta)^n\ d\theta
    \end{align*}
    where the second equality comes from the fact that the lower-order terms of $W_n(\rho\sin\theta)$ have no harmonic content along $e^{in\theta}$. Finally, the constant is 
    \begin{align*}
	\int_{\Sm^1} e^{in\theta} (e^{i\theta} - e^{-i\theta})^n\ d\theta = \int_{\Sm^1} (e^{2i\theta}-1)^n\ d\theta = 2\pi (-1)^n.
    \end{align*}
    In short, $Z_{n,0} = \rho^n e^{in\omega} = z^n$. This also implies $\partial_{\zbar} Z_{n,0}=0$ and since we have $Z_{n,n} = (-1)^n\overline{Z_{n,0}} = (-1)^n \zbar^n$, we deduce that $\partial_z Z_{n,n} = 0$. 

    To prove the boundary condition, using that $Z_{n,k}(\rho e^{i\omega}) = e^{i(n-2k)\omega} Z_{n,k}(\rho)$, it is enough to show that $Z_{n,k}(1) = (-1)^k$ for every $n\ge 0$ and $0\le k\le n$. That this is true for $k=0$ and $k=n$ follows from the expressions just computed, and the general claim follows by induction on $n$ once the following equality is satisfied: 
    \begin{align}
	Z_{n,k}(1) = Z_{n-2,k-1}(1) - Z_{n-1,k-1}(1) + Z_{n-1,k}(1).
	\label{eq:claimZ}
    \end{align}
    To prove \eqref{eq:claimZ}, it suffices to input the recursion $W_n(\sin\theta) = 2i\sin\theta W_{n-1}(\sin\theta) + W_{n-2}(\sin\theta)$ into the expression \eqref{eq:Zee}, and to evaluate it at $\rho e^{i\omega} = 1$. 
\end{proof}

From Lemma \ref{lem:Znk}, we see that the family so defined satisfies the characterization (b) of \cite[Theorem 1]{Kazantsev2004} of the Zernike polynomials. One may see that this characterization defines the same family due the following facts: for $n\ge 0$ and $k=0$, the functions $Z_{n,k}$ in both sets agree; by induction on $k>0$, in both sets of functions, $Z_{n,k}$ satisfies a $\partial_{\zbar}$ equation with same right-hand side and same boundary condition, for which a solution is unique if it exists. 

We can then use some of the properties given in \cite{Kazantsev2004}, in particular, the following orthogonality property 
\begin{align}
    \dprod{Z_{n,k}}{Z_{n',k'}}_{L^2(\Dm)} = \frac{\pi}{n+1}\ \delta_{n,n'}\ \delta_{k,k'},
    \label{eq:Znknorm}
\end{align}
and the fact that $\left\{\frac{\sqrt{n+1}}{\sqrt{\pi}}Z_{n,k}\right\}_{n\ge 0,\ 0\le k\le n}$ is an orthonormal basis of $L^2(\Dm)$. 

\subsection{Constant curvature case - Proof of Theorem \ref{thm:main}} \label{sec:SVD2}

As in the previous section, we reparameterize the basis of $\V_+$ using $(n,k)$ indexing: for $n\in \Nm$ and $k\in \Zm$, consider $\psi^\kappa_{n,k} := \frac{(-1)^n}{4\pi} u'_{n-2k,n-k}$, which can be rewritten as
\begin{align}
    \begin{split}
	\psi^\kappa_{n,k} &= \frac{(-1)^n}{4\pi} \sqrt{\ss'(\alpha)} e^{i(n-2k) (\beta+\ss(\alpha))} g_n(\ss(\alpha)), \qquad \text{where} \\
	g_n(\ss(\alpha)) &:= (e^{i(n+1)\ss(\alpha)}+(-1)^n e^{-i(n+1)\ss(\alpha)}).	
    \end{split}
    \label{eq:psink}
\end{align}

First observe the following fact: 
\begin{lemma}\label{lem:ortho}
    The family $\{\psi^\kappa_{n,k},\ n\ge 0,\ k\in \Zm\}$ is orthogonal in $\V_+$, with norm $\|\psi^\kappa_{n,k}\|^2 =\frac{1}{4(1+\kappa)}$ for all $n\ge 0$ and $k\in \Zm$.     
\end{lemma}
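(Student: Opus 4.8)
The plan is to compute the Hermitian $L^2(\partial_+ S\Dm, d\Sigma^2)$ inner product of two members of the family directly, exploiting the product structure of \eqref{eq:psink} in $\beta$ and $\alpha$. Recalling that on $\partial_+ S\Dm$ one has $d\Sigma^2 = c_\kappa^{-1}(1)\, d\beta\, d\alpha = (1+\kappa)^{-1} d\beta\, d\alpha$ with $(\beta,\alpha)\in \Sm^1\times[-\pi/2,\pi/2]$, I would first set $m := n-2k$ and $m' := n'-2k'$ and carry out the $\beta$-integration. Since $\psi^\kappa_{n,k}$ carries the factor $e^{im\beta}$ and $\overline{\psi^\kappa_{n',k'}}$ the factor $e^{-im'\beta}$, this integral equals $2\pi\,\delta_{m,m'}$, so the product vanishes unless $m=m'$. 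In particular this already forces $n\equiv n'\pmod 2$, a fact that will be decisive below.

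The second step is the change of variables $\phi = \ss(\alpha)$. Because $\ss$ restricts to a smooth diffeomorphism of $[-\pi/2,\pi/2]$ onto itself (with $\ss'$ bounded above and below by positive constants, cf. \eqref{eq:sprime}), one has $d\phi = \ss'(\alpha)\, d\alpha$, and the two factors $\sqrt{\ss'(\alpha)}$ appearing in $\psi^\kappa_{n,k}$ and $\overline{\psi^\kappa_{n',k'}}$ combine to exactly the Jacobian $\ss'(\alpha)$. When $m=m'$ the surviving phase $e^{i(m-m')\ss(\alpha)}$ is trivial, so after substitution the $\alpha$-dependence collapses to $\int_{-\pi/2}^{\pi/2} g_n(\phi)\overline{g_{n'}(\phi)}\, d\phi$, with $g_n$ as in \eqref{eq:psink}. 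This is the crucial simplification: all $\kappa$-dependence has been pushed into the measure normalization $(1+\kappa)^{-1}$, and the surviving integral is the \emph{same} trigonometric integral one meets in the Euclidean case.

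Finally I would expand $g_n(\phi)\overline{g_{n'}(\phi)}$ into its four exponential terms, with exponents $\pm(n-n')$ and $\pm(n+n'+2)$, and integrate each over $[-\pi/2,\pi/2]$ using $\int_{-\pi/2}^{\pi/2} e^{ij\phi}\,d\phi = 0$ for every nonzero even integer $j$ and $=\pi$ for $j=0$. Since $m=m'$ forces $n+n'$ even, all four exponents are even; the two cross exponents $\pm(n+n'+2)$ are even and nonzero (as $n,n'\ge 0$), hence contribute nothing, while the diagonal exponents $\pm(n-n')$ contribute only when $n=n'$. Thus the integral is $0$ for $n\ne n'$ and $2\pi$ for $n=n'$ (in which case $m=m'$ forces $k=k'$). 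Assembling the constants gives $\frac{1}{1+\kappa}\cdot\frac{(-1)^{n+n'}}{16\pi^2}\cdot 2\pi\cdot 2\pi = \frac{1}{4(1+\kappa)}$ on the diagonal, as claimed, and $0$ off it. The point demanding care—and the main obstacle—is the bookkeeping around index collisions: distinct pairs $(n,k)\ne(n',k')$ can share the same $m=n-2k$, so orthogonality cannot be read off from the $\beta$-integral alone and genuinely relies on the vanishing of the cross terms, which in turn hinges on the parity constraint $n\equiv n'\pmod 2$ inherited from $m=m'$.
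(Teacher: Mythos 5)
Your proposal is correct and follows essentially the same route as the paper's proof: the $\beta$-integration forces $n-2k = n'-2k'$ (hence the parity constraint), the substitution $\phi = \ss(\alpha)$ absorbs the two $\sqrt{\ss'}$ factors into the Jacobian and reduces everything to the Euclidean trigonometric integral, which is then evaluated term by term. The only cosmetic difference is that the paper writes $n' = n+2\ell$ and works with cosines $\cos(2\ell\alpha)$, $\cos(2(n+\ell+1)\alpha)$, where you expand into the four exponentials with exponents $\pm(n-n')$, $\pm(n+n'+2)$; the computations are identical.
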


\begin{proof}
    Let $(n,k)$ and $(n',k')$ given. First notice that if $n-2k\ne n'-2k'$, the inner product $(\psi^\kappa_{n,k},\psi^\kappa_{n',k'})_{d\Sigma^2}$ will vanish due to the integration of $e^{i(n-2k - (n'-2k'))\beta}$. Now assuming $n-2k=n'-2k'$, this implies that $n$ and $n'$ have the same parity. In this case, write for example $n' = n + 2\ell$ for some $\ell\ge 0$, fix $k'$ such that $n-2k = n'-2k'$, and compute
    \begin{align*}
	(\psi^\kappa_{n,k},\psi^\kappa_{n',k'})_{d\Sigma^2} &= \frac{c_\kappa(1)^{-1}}{8\pi} \int_{-\pi/2}^{\pi/2} g_{n}(\ss(\alpha)) \overline{g_{n+2\ell}}(\ss(\alpha))\ \ss'(\alpha)\ d\alpha\\
	&= \frac{c_\kappa(1)^{-1}}{8\pi} \int_{-\pi/2}^{\pi/2} g_{n}(\alpha) \overline{g_{n+2\ell}}(\alpha)\ \ d\alpha\\
	&= \frac{c_\kappa(1)^{-1}}{4\pi} \int_{-\pi/2}^{\pi/2} (\cos(2\ell \alpha) + (-1)^n \cos (2(n+\ell+1)\alpha) )\ d\alpha \\
	&= \frac{1}{4(1+\kappa)} \delta_{\ell,0},
    \end{align*}
    hence the result. 
\end{proof}

For the topology $L^2(\partial_+ S\Dm,\ d\Sigma^2)$, the adjoint of $I_0$ is given by $w\mapsto I_0^\sharp \left[\frac{w}{\mu}\right]$ with $I_0^\sharp$ defined in \eqref{eq:adjoint}. Let us then consider the functions 
\begin{align*}
    I_0^\sharp \left[ \frac{\psi^\kappa_{n,k}}{\mu} \right] (\rho e^{i\omega}) = \frac{(-1)^n}{2\pi} \int_{\Sm^1} e^{i(n-2k) (\beta_{-} + \ss(\alpha_-))} \sqrt{\ss'(\alpha_-)} \frac{e^{i(n+1)\ss(\alpha_-)} + (-1)^n e^{-i(n+1)\ss(\alpha_-)}}{2\cos(\alpha_-)}\ d\theta 
\end{align*}
where $(\beta_-,\alpha_-)$ are short for $(\beta_-(\rho e^{i\omega}, \theta),\alpha_- (\rho e^{i\omega},\theta))$, the fan-beam coordinates of the unique $g_\kappa$-geodesic passing through $(\rho e^{i\omega}, \theta)$. With the identities \eqref{eq:important}, this can be rewritten as 
\begin{align*}
    I_0^\sharp &\left[ \frac{\psi^\kappa_{n,k}}{\mu} \right] (\rho e^{i\omega}) \\
    &= \sqrt{\frac{1+\kappa}{1-\kappa}} \frac{(-1)^n}{2\pi} \int_{\Sm^1} e^{i(n-2k)(\beta_{-} + \ss(\alpha_-))} \ss'(\alpha_-) \frac{e^{i(n+1)\ss(\alpha_-)} + (-1)^n e^{-i(n+1)\ss(\alpha_-)}}{2\cos(\ss(\alpha_-))}\ d\theta \\
    &= \sqrt{\frac{1+\kappa}{1-\kappa}} \frac{(-1)^n}{2\pi} \int_{\Sm^1} e^{i(n-2k)(\beta_{-} + \ss(\alpha_-))} \ss'(\alpha_-) W_n(\sin(\ss(\alpha_-)))\ d\theta.
\end{align*}
Using the symmetries 
\begin{align*}
    \beta_- (\rho e^{i\omega}, \theta) = \beta_- (\rho,\theta-\omega) + \omega, \qquad \alpha_{-}(\rho e^{i\omega}, \theta) = \alpha_- (\rho, \theta-\omega),
\end{align*}
we obtain the expression 
\begin{align}
    I_0^\sharp &\left[ \frac{\psi^\kappa_{n,k}}{\mu} \right] (\rho e^{i\omega}) = \sqrt{\frac{1+\kappa}{1-\kappa}} \frac{(-1)^n}{2\pi} e^{i(n-2k)\omega} \int_{\Sm^1} e^{i(n-2k)(\beta_{-} + \ss(\alpha_-))} \ss'(\alpha_-) W_n(\sin(\ss(\alpha_-)))\ d\theta,
    \label{eq:tmp}
\end{align}
with $W_n$ defined in \eqref{eq:Wn}, and where $(\alpha_-,\beta_-)$ are now evaluated at $(\rho,\theta)$. We now need to make the functions $\beta_- + \ss(\alpha_-)$ and $\sin (\ss(\alpha_-))$ more explicit. Specifically, we will derive the following in the next section: 

\begin{lemma}\label{lem:btalminus}
    The following relations hold: 
    \begin{align}
	\beta_-(\rho,\theta) + \ss(\alpha_-(\rho,\theta)) + \pi &= \theta - \tan^{-1} \left(\frac{\kappa \rho^2 \sin(2\theta)}{1+\kappa\rho^2 \cos(2\theta)}\right),
	\label{eq:bmam} \\
	\frac{\sin (\ss(\alpha_-(\rho,\theta)))}{\sqrt{\ss'(\alpha_-(\rho,\theta))}} &= -\frac{\sqrt{1-\kappa^2}}{1+\kappa\rho^2}\rho \sin \theta.
	\label{eq:sinam}
    \end{align}
\end{lemma}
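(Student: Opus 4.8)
The plan is to realize the geodesic through $(\rho,\theta)$ explicitly via the isometries of Lemma \ref{lem:isometries} and read off $\beta_-,\alpha_-$ from its two boundary intersections. Since the rotation $\omega$ has already been factored out in \eqref{eq:tmp}, I may take the base point $z_1=\rho\in[0,1)$ real; the unit-speed geodesic through $(\rho,c_\kappa(\rho)e^{i\theta})$ is then $t\mapsto T(z(t))$, where $T=T^\kappa_{\rho,\theta}(z)=\frac{e^{i\theta}z+\rho}{1-\kappa e^{i\theta}\rho z}$ and $z(t)$ is the real-valued reference geodesic \eqref{eq:geo0}. Its endpoints on $\partial\Dm$ correspond to the real roots $s_-<0<s_+$ of $|T(s)|^2=1$, which after expansion reduces to the quadratic
\begin{align*}
(1-\kappa^2\rho^2)\,s^2+2\rho(1+\kappa)\cos\theta\,s-(1-\rho^2)=0,
\end{align*}
with $e^{i\beta_-}=T(s_-)$ the entry point and $e^{i\beta_+}=T(s_+)$ the exit point.

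For \eqref{eq:bmam} I would avoid solving the quadratic and instead compute the product $e^{i(\beta_-+\beta_+)}=T(s_-)T(s_+)$, which is a symmetric function of the two roots and hence computable from Vieta's formulas $s_-+s_+=-\frac{2\rho(1+\kappa)\cos\theta}{1-\kappa^2\rho^2}$ and $s_-s_+=-\frac{1-\rho^2}{1-\kappa^2\rho^2}$ alone. A direct computation shows that both the numerator and the denominator of $T(s_-)T(s_+)$ factor through $(1+\kappa\rho^2)$, leaving the clean expression
\begin{align*}
T(s_-)T(s_+)=-\frac{e^{2i\theta}+\kappa\rho^2}{1+\kappa\rho^2 e^{2i\theta}}.
\end{align*}
On the other hand, the scattering relation \eqref{eq:SA0} applied to the entry point $(\beta_-,\alpha_-)$ gives $\beta_+=\beta_-+\pi+2\ss(\alpha_-)$, so $e^{i(\beta_-+\beta_+)}=-e^{2i(\beta_-+\ss(\alpha_-))}$. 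Equating and writing $w:=1+\kappa\rho^2 e^{2i\theta}$ (whose argument is $\tan^{-1}\frac{\kappa\rho^2\sin 2\theta}{1+\kappa\rho^2\cos2\theta}$) yields $e^{2i(\beta_-+\ss(\alpha_-))}=e^{2i\theta}\,\overline{w}/w$, which is \eqref{eq:bmam} modulo $\pi$. To upgrade this to equality modulo $2\pi$ I would argue by continuity: since $|\kappa|\rho^2<1$ forces $\mathrm{Re}\,w>0$, the right-hand side of \eqref{eq:bmam} is a globally continuous function of $(\rho,\theta)$, and it agrees with the (continuous) left-hand side at $\rho=0$, where the geodesic is the diameter in direction $\theta$ so that $\beta_-=\theta+\pi$ and $\alpha_-=0$; a half-period jump being impossible under continuity, the two sides coincide everywhere.

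For \eqref{eq:sinam} I would first use the identity \eqref{eq:important} to rewrite $\frac{\sin(\ss(\alpha_-))}{\sqrt{\ss'(\alpha_-)}}=\sqrt{\frac{1-\kappa}{1+\kappa}}\sin\alpha_-$, so that the claim is equivalent to the Clairaut-type relation $\sin\alpha_-=-\frac{(1+\kappa)\rho\sin\theta}{1+\kappa\rho^2}$. This I would obtain from conservation of angular momentum: writing $g_\kappa=c_\kappa^{-2}(d\rho^2+\rho^2 d\omega^2)$, the coordinate $\omega$ is cyclic, so $p_\omega=\frac{\rho}{c_\kappa(\rho)}\sin\psi$ is constant along the geodesic, where $\psi$ is the signed angle between $\dot\gamma$ and the outward radial direction. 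Evaluating $p_\omega$ at the interior point $(\rho,\theta)$ — where conformality of $g_\kappa$ with the Euclidean metric gives $\psi=\theta$ — and at the entry point — where $\dot\gamma\propto e^{i(\beta_-+\pi+\alpha_-)}$ gives $\psi=\pi+\alpha_-$ and $c_\kappa(1)=1+\kappa$ — and equating the two values produces exactly the desired relation, with the correct sign. Combining with \eqref{eq:important} then gives \eqref{eq:sinam}.

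The main obstacle is the bookkeeping in the second step: the factorization of $T(s_-)T(s_+)$ into $-\frac{e^{2i\theta}+\kappa\rho^2}{1+\kappa\rho^2 e^{2i\theta}}$ is the computational heart of the argument, and the subsequent branch and orientation fixing (passing from the squared identity modulo $\pi$ to \eqref{eq:bmam} modulo $2\pi$, and pinning the sign in Clairaut's relation) must be carried out carefully, since $\beta_-+\ss(\alpha_-)$ ultimately enters the singular functions through $e^{i(n-2k)(\beta+\ss(\alpha))}$ with possibly odd $n-2k$, where the value modulo $2\pi$ genuinely matters.
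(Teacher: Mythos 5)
Your proposal is correct. For \eqref{eq:bmam} it is essentially the paper's own argument: the same isometry $T^\kappa_{\rho,\theta}$, the same quadratic for the boundary parameters of the geodesic, and the same trick of computing the product $T(s_-)T(s_+)$ of the two boundary points symmetrically via Vieta's formulas and matching it against $-e^{2i(\beta_-+\ss(\alpha_-))}$ coming from the scattering relation \eqref{eq:SA0}; your computation agrees with the paper's ($-e^{2i\theta}\frac{1+\kappa\rho^2e^{-2i\theta}}{1+\kappa\rho^2e^{2i\theta}}$ is the same quantity you wrote). The only difference is in resolving the resulting mod-$\pi$ ambiguity: the paper invokes, rather tersely, the Euclidean relation $\beta_-+\alpha_-+\pi=\theta$, while you use a continuity argument anchored at $\rho=0$ (where $\beta_-=\theta+\pi$, $\alpha_-=0$); your version is more explicit and equally valid, and your remark that the value mod $2\pi$ genuinely matters downstream (odd $n-2k$) is well taken. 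For \eqref{eq:sinam} you take a genuinely different route. The paper derives the intermediate identity $\sin\alpha_-(\rho,\theta)=-\frac{1+\kappa}{1+\kappa\rho^2}\rho\sin\theta$ from the constant-curvature law of sines \eqref{eq:sinerule} applied to the geodesic triangle with vertices $0$, $\rho$, $e^{i\beta_-}$, using the weighted sine $\sin_{4\kappa}$ and the explicit formula $\sin_{4\kappa}(d_\kappa(\rho,0))=\frac{\rho}{1+\kappa\rho^2}$. You obtain the same identity from Clairaut's relation: $\partial_\omega$ is a Killing field for the rotation-invariant metric $g_\kappa$, so $\frac{\rho}{c_\kappa(\rho)}\sin\psi$ is conserved along geodesics, and evaluating at the interior point ($\psi=\theta$, by conformality) and at the entry point ($\psi=\pi+\alpha_-$, $c_\kappa(1)=1+\kappa$) gives the relation with the correct sign; both proofs then finish identically via \eqref{eq:important}. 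The trade-off is instructive: your Clairaut argument uses only rotational symmetry, not constant curvature, so that half of the lemma would extend verbatim to the ``Herglotz''-type metrics mentioned in the introduction, isolating the constant-curvature input entirely in \eqref{eq:important} (Lemma \ref{lem:sqrtjac}) and in the explicit isometries used for \eqref{eq:bmam}; the paper's law-of-sines proof instead stays within the synthetic geometry of the model spaces, consistent with its overall toolkit.
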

In light of \eqref{eq:bmam}, we want to make in \eqref{eq:tmp} the change of variable in the fiber
\begin{align}
    \theta'(\rho,\theta) := \theta - \tan^{-1} \left(\frac{\kappa\rho^2 \sin(2\theta)}{1+\kappa\rho^2 \cos(2\theta)}\right).
    \label{eq:thetap}
\end{align}
We then state two important identities, also proved in the next section: 
\begin{lemma}\label{lem:thetap}
    The change of variable $\theta\to \theta'$ in \eqref{eq:thetap} satisfies the following: 
    \begin{align}
	\frac{\partial \theta'}{\partial\theta} &= \frac{1- \kappa\rho^2}{1+\kappa\rho^2} \frac{1+\kappa}{1-\kappa} \ss'(\alpha_-(\rho,\theta)), \label{eq:jactheta} \\
	\sin\theta' &= \frac{1-\kappa\rho^2}{1+\kappa\rho^2} \sqrt{\frac{1+\kappa}{1-\kappa}} \sqrt{ \ss'(\alpha_-(\rho,\theta))} \sin\theta. \label{eq:sintheta}
    \end{align}
\end{lemma}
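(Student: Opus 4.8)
The plan is to verify \eqref{eq:jactheta} and \eqref{eq:sintheta} by separating a purely computational step, which uses only the explicit formula \eqref{eq:thetap} for $\theta'$, from a geometric step that rewrites the factor $\ss'(\alpha_-(\rho,\theta))$ on the right-hand sides as a closed-form function of $(\rho,\theta)$. The two steps meet at the single identity
\begin{align*}
    \ss'(\alpha_-(\rho,\theta)) = \frac{1-\kappa}{1+\kappa}\, \frac{(1+\kappa\rho^2)^2}{1+2\kappa\rho^2\cos(2\theta)+\kappa^2\rho^4}, \tag{$\ast$}
\end{align*}
and once $(\ast)$ is established both target formulas follow by inspection. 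Throughout, since $|\kappa|<1$ and $\rho\le 1$, the quantities $1+\kappa\rho^2$ and $1+\kappa\rho^2\cos(2\theta)$ are positive, so all square roots and the branch of $\tan^{-1}$ in \eqref{eq:thetap} are unambiguous.

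For the computational step I would abbreviate $a:=\kappa\rho^2$ and $u:=\frac{a\sin(2\theta)}{1+a\cos(2\theta)}$, so that $\theta' = \theta - \tan^{-1}u$. A direct differentiation first gives
\begin{align*}
    1+u^2 = \frac{1+2a\cos(2\theta)+a^2}{(1+a\cos(2\theta))^2}, \qquad \frac{\partial u}{\partial\theta} = \frac{2a(\cos(2\theta)+a)}{(1+a\cos(2\theta))^2}.
\end{align*}
Then $\partial_\theta\theta' = 1 - (1+u^2)^{-1}\partial_\theta u$ collapses to $\frac{1-a^2}{1+2a\cos(2\theta)+a^2}$. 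Likewise, from $\sin\theta' = (1+u^2)^{-1/2}(\sin\theta - u\cos\theta)$ together with the simplification $\sin\theta - u\cos\theta = \frac{(1-a)\sin\theta}{1+a\cos(2\theta)}$ (where $2\cos^2\theta = 1+\cos(2\theta)$ does all the work) I obtain $\sin\theta' = \frac{(1-a)\sin\theta}{\sqrt{1+2a\cos(2\theta)+a^2}}$. Restoring $a=\kappa\rho^2$ and factoring $1-a^2 = (1-\kappa\rho^2)(1+\kappa\rho^2)$, both \eqref{eq:jactheta} and \eqref{eq:sintheta} are seen to be equivalent to $(\ast)$.

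It then remains to prove $(\ast)$. Combining \eqref{eq:sinam} with the second relation in \eqref{eq:important} yields $\sqrt{\tfrac{1-\kappa}{1+\kappa}}\,\sin\alpha_- = -\tfrac{\sqrt{1-\kappa^2}}{1+\kappa\rho^2}\rho\sin\theta$, that is, $\sin\alpha_-(\rho,\theta) = -\tfrac{1+\kappa}{1+\kappa\rho^2}\rho\sin\theta$, so that $\cos(2\alpha_-) = 1-2\sin^2\alpha_-$ becomes explicit in $(\rho,\theta)$. Inserting this into the expression \eqref{eq:sprime} for $\ss'$ and clearing denominators, $(\ast)$ reduces to the polynomial identity $(1+\kappa\rho^2)^2 - 4\kappa\rho^2\sin^2\theta = 1+2\kappa\rho^2\cos(2\theta)+\kappa^2\rho^4$, which is immediate from $2\sin^2\theta = 1-\cos(2\theta)$.

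All computations are elementary; the only genuine content is the geometric input \eqref{eq:sinam} from Lemma \ref{lem:btalminus}, which converts the implicitly defined $\alpha_-(\rho,\theta)$ into closed form. I expect the main point requiring care to be the bookkeeping of the $\kappa$-dependent prefactors, so that the $(1+\kappa)/(1-\kappa)$ and $(1-\kappa\rho^2)/(1+\kappa\rho^2)$ weights in \eqref{eq:jactheta}--\eqref{eq:sintheta} come out exactly; beyond that I anticipate no obstacle.
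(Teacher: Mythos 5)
Your proof is correct, and its backbone coincides with the paper's: both hinge on a single closed-form identity for $\ss'(\alpha_-(\rho,\theta))$ in terms of $(\rho,\theta)$ --- your $(\ast)$ is exactly the paper's intermediate equation \eqref{eq:tmpsintheta}, since $1+2\kappa\rho^2\cos(2\theta)+\kappa^2\rho^4 = (1+\kappa\rho^2)^2-4\kappa\rho^2\sin^2\theta$. The differences are in execution, and two are worth noting. First, to reach the key identity the paper stays at the level of $\ss(\alpha_-)$: it inserts \eqref{eq:sinam} into the second expression in \eqref{eq:sprime} and solves the resulting linear equation for $\ss'(\alpha_-)$; you instead combine \eqref{eq:sinam} with the second identity of \eqref{eq:important} to recover $\sin\alpha_- = -\frac{1+\kappa}{1+\kappa\rho^2}\rho\sin\theta$ (an intermediate step inside the paper's proof of Lemma \ref{lem:btalminus}, so no circularity) and feed it into the first expression in \eqref{eq:sprime} --- same content, equally valid, and it avoids solving for $\ss'$ since everything is already explicit. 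Second, for \eqref{eq:sintheta} the paper squares: it computes $\cos(2\theta')$ as a linear fractional function of $\cos(2\theta)$, deduces $\sin^2\theta'$ in terms of $\sin^2\theta$, and then must invoke a sign argument (``$\sin\theta$ and $\sin\theta'$ have simultaneously the same sign'') before taking square roots. Your direct computation $\sin\theta' = (1+u^2)^{-1/2}(\sin\theta-u\cos\theta)$ via the angle-subtraction formula works at the level of $\sin\theta'$ itself, so no sign discussion is needed --- a small but genuine simplification, legitimate because $1+\kappa\rho^2\cos(2\theta)>0$ guarantees $\tan^{-1}u\in(-\pi/2,\pi/2)$, as you note. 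Similarly, your Jacobian computation differentiates the real formula \eqref{eq:thetap} where the paper differentiates the complex relation $e^{2i\theta'} = (e^{2i\theta}+\kappa\rho^2)/(1+\kappa\rho^2 e^{2i\theta})$; both are routine and agree.
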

Combining \eqref{eq:sintheta} with \eqref{eq:sinam}, we arrive at the relation
\begin{align*}
    \sin(\ss(\alpha_-(\rho,\theta))) = - \frac{1-\kappa}{1-\kappa\rho^2} \rho\sin \theta'.
\end{align*}
Using these relations with \eqref{eq:tmp}, we then arrive at 
\begin{align}
    \frac{1-\kappa\rho^2}{1+\kappa\rho^2} \sqrt{\frac{1+\kappa}{1-\kappa}} I_0^\sharp \left[ \frac{\psi^\kappa_{n,k}}{\mu} \right] (\rho e^{i\omega})  &= \frac{e^{i(n-2k)\omega}}{2\pi} \int_{\Sm^1} e^{i(n-2k)\theta'} W_n \left( - \frac{1-\kappa}{1-\kappa\rho^2} \rho\sin \theta' \right) \frac{\partial \theta'}{\partial \theta}\ d\theta \nonumber \\
    &= \frac{e^{i(n-2k)\omega}}{2\pi} \int_{\Sm^1} e^{i(n-2k)\theta'} W_n \left( - \frac{1-\kappa}{1-\kappa\rho^2} \rho\sin \theta' \right)\ d\theta'. \label{eq:lasttmp}
\end{align}

We now split cases in a similar way as the Euclidean case. 

\paragraph{Case $k<0$ or $k>n$.} In light of \eqref{eq:lasttmp}, since $W_n$ is a polynomial of degree $n$, then the function $W_n \left( - \frac{1-\kappa}{1-\kappa\rho^2} \rho\sin \theta' \right)$ is a trigonometric polynomial of degree $n$ in $e^{i\theta'}$. In particular, if $k<0$ or $k>n$, then $|n-2k|>n$ and thus the right hand side of \eqref{eq:lasttmp} is identically zero, and we conclude that 
\begin{align}
    I_0^\sharp \left[\frac{\psi^\kappa_{n,k}}{\mu}\right] = 0, \qquad n\ge 0, \qquad k<0 \text{ or } k>n.
    \label{eq:kerI0sharp}
\end{align}

\paragraph{Case $0\le k\le n$.} When $0\le k\le n$, we then define $\wtZ_{n,k} := I_0^\sharp \left[ \frac{\psi^\kappa_{n,k}}{\mu} \right]$ and comparing \eqref{eq:lasttmp} with \eqref{eq:Znk}, we find that
\begin{align*}
    \frac{1-\kappa\rho^2}{1+\kappa\rho^2} \sqrt{\frac{1+\kappa}{1-\kappa}} \wtZ_{n,k}(\rho e^{i\omega})&= Z_{n,k} \left( \frac{1-\kappa}{1-\kappa\rho^2} \rho\ e^{i\omega} \right),
\end{align*}
in other words, for any $n\ge 0$ and $0\le k\le n$, 
\begin{align}
    \wtZ_{n,k}(\rho e^{i\omega}) = \frac{1+\kappa\rho^2}{1-\kappa\rho^2} \sqrt{\frac{1-\kappa}{1+\kappa}} Z_{n,k}  \left( \frac{1-\kappa}{1-\kappa\rho^2} \rho\ e^{i\omega} \right).
    \label{eq:wtZ}
\end{align}

\paragraph{Orthogonality of $\wtZ_{n,k}$.}
Now that we fully understand the action of $I_0^\sharp \frac{1}{\mu}$ on $\V_+$, the last question is then to find out for which topology on $\Dm$ the family $\{\wtZ_{n,k}\}$ is orthogonal. We look for a measure of the form $w(\rho)\ dVol_\kappa = w(\rho)\ \frac{\rho\ d\rho\ d\omega}{(1+\kappa\rho^2)^2}$, and want to change variable $\rho' = \frac{1-\kappa}{1-\kappa\rho^2} \rho$, with jacobian $\rho'\ d\rho' = (1-\kappa)^2 \frac{1+\kappa\rho^2}{(1-\kappa\rho^2)^3} \rho\ d\rho$, to make appear
\begin{align*}
    \int_{\Dm} \wtZ_{n,k}(\rho e^{i\omega}) &\wtZ_{n',k'}(\rho e^{i\omega}) w(\rho)\ \frac{\rho\ d\rho\ d\omega}{(1+\kappa\rho^2)^2} \\
    &= \frac{1-\kappa}{1+\kappa} \int_\Dm \frac{(1+\kappa\rho^2)^2}{(1-\kappa\rho^2)^2} Z_{n,k} (\rho' e^{i\omega}) Z_{n',k'}(\rho' e^{i\omega})\ w(\rho)\ \frac{\rho\ d\rho\ d\omega}{(1+\kappa\rho^2)^2} \\
    &= \frac{1}{1-\kappa^2} \int_\Dm Z_{n,k} (\rho' e^{i\omega}) Z_{n',k'}(\rho' e^{i\omega})\ w(\rho)\ \frac{(1-\kappa)^2 \rho\ d\rho\ d\omega}{(1-\kappa\rho^2)^2}. 
\end{align*}
In light of the jacobian, the change $\rho\to \rho'$ will land in the Euclidean volume form if $w(\rho) = \frac{1+\kappa\rho^2}{1-\kappa\rho^2}$. Assuming this is the case, we obtain, upon using \eqref{eq:Znknorm},
\begin{align*}
    \int_{\Dm} \wtZ_{n,k}(\rho e^{i\omega}) \wtZ_{n',k'}(\rho e^{i\omega}) w(\rho)\ \frac{\rho\ d\rho\ d\omega}{(1+\kappa\rho^2)^2} &= \frac{1}{1-\kappa^2} \int_\Dm Z_{n,k} (\rho' e^{i\omega}) Z_{n',k'}(\rho' e^{i\omega})\ \rho'\ d\rho'\ d\omega \\
    &= \frac{1}{1-\kappa^2} \frac{\pi}{n+1}  \delta_{n,n'}\ \delta_{k,k'}.
\end{align*} 

Now Theorem \ref{thm:SVDI0sharp} below and the proof of Theorem \ref{thm:main} will be based on the following observation: let $(\H_1,\|\cdot\|_1)$, $(\H_2,\|\cdot\|_2)$ be two Hilbert spaces and $A\colon \H_1\to \H_2$ be a bounded operator; if there exist two complete orthogonal systems $\{x_n\}$ in $\H_1$ and $\{y_n\}$ in $\H_2$ such that $A x_n = y_n$ for all $n$, then the singular value decomposition of $A$ is $(x_n/\|x_n\|_{1}, y_n/\|y_n\|_2, \|y_n\|_2/\|x_n\|_1)_n$. This also implies that the SVD of the adjoint $A^*$ is $(y_n/\|y_n\|_2, x_n/\|x_n\|_{1}, \|y_n\|_2/\|x_n\|_1)_n$.

Based on this observation and the earlier calculations, we can formulate the following result: 

\begin{theorem} \label{thm:SVDI0sharp} Let $\kappa\in (-1,1)$. Define the weight $w_\kappa(z):= \frac{1+\kappa|z|^2}{1-\kappa|z|^2}$ for $z\in \Dm$. Then the operator 
    \begin{align*}
	I_0^\sharp \frac{1}{\mu} \colon \V_+ \to L^2(\Dm, w_\kappa\ dVol_\kappa)	
    \end{align*}
    has kernel
    \begin{align*}
	\ker I_0^\sharp \frac{1}{\mu} = \text{span} \{ \psi_{n,k}^\kappa, \quad n\ge 0,\ k\in \Zm\backslash \{0,1,\dots,n\}\}
    \end{align*}
    and its restriction to the orthocomplement of that kernel has SVD $(\widehat{\psi^\kappa_{n,k}}, \widehat{\wtZ_{n,k}}, \sigma^\kappa_{n,k})_{n\ge 0,\ 0\le k\le n}$, where
    \begin{align*}
	\widehat{\psi^\kappa_{n,k}} = \frac{\psi^\kappa_{n,k}}{\|\psi^\kappa_{n,k}\|} = 2\sqrt{1+\kappa}\ \psi^\kappa_{n,k}, \qquad \widehat{\wtZ_{n,k}} = \frac{\wtZ_{n,k}}{\|\wtZ_{n,k}\|} = \frac{\sqrt{n+1}}{\sqrt{\pi}}\sqrt{1-\kappa^2}\ \wtZ_{n,k}, 
    \end{align*}
    and where the spectral values equal
    \begin{align*}
	\sigma^\kappa_{n,k} = \frac{\|\wtZ_{n,k}\|}{\|\psi^\kappa_{n,k}\|} = \frac{1}{\sqrt{1-\kappa}}\frac{2\sqrt{\pi}}{\sqrt{n+1}}, \qquad n\ge 0, \qquad 0\le k\le n.
    \end{align*}
\end{theorem}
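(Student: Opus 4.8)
The plan is to apply the abstract observation recorded just above the statement, with $A := I_0^\sharp \frac{1}{\mu}$, $\H_1 := \V_+$ and $\H_2 := L^2(\Dm, w_\kappa\ dVol_\kappa)$. First I would note that $A$ is bounded: it is a restriction of the adjoint of $I_0\colon L^2(\Dm, dVol_\kappa)\to L^2(\partial_+ S\Dm, \mu\ d\Sigma^2)$, and since $w_\kappa$ is bounded above and below by positive constants, the codomain topology $L^2(\Dm, w_\kappa\ dVol_\kappa)$ is equivalent to $L^2(\Dm, dVol_\kappa)$. It then remains only to identify $\ker A$ and to feed two complete orthogonal systems into the observation.

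For the kernel, I would invoke \eqref{eq:kerI0sharp}, which gives $A\psi^\kappa_{n,k}=0$ whenever $k<0$ or $k>n$, so the stated span is contained in $\ker A$. To see that it is \emph{exactly} $\ker A$, recall that $\{\psi^\kappa_{n,k}\}_{n\ge 0,\ k\in\Zm}$ is a complete orthogonal system of $\V_+$, so the orthocomplement of that span is $\overline{\text{span}}\{\psi^\kappa_{n,k}\colon 0\le k\le n\}$. On this orthocomplement $A$ is injective, because by definition $A\psi^\kappa_{n,k}=\wtZ_{n,k}$ for $0\le k\le n$, and the $\wtZ_{n,k}$ are orthogonal (hence nonzero and linearly independent) in $\H_2$; this forces $\ker A$ to coincide with the stated span.

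With the kernel settled, I would apply the observation on the orthocomplement of $\ker A$, where $\{\psi^\kappa_{n,k}\}_{0\le k\le n}$ is a complete orthogonal system by construction, $A$ sends it bijectively to $\{\wtZ_{n,k}\}_{0\le k\le n}$, and the latter is a complete orthogonal system of $\H_2$ — completeness following from the unitary equivalence with the Zernike polynomials encoded in \eqref{eq:wtZ} together with their completeness in $L^2(\Dm)$. The observation then yields the SVD $(\widehat{\psi^\kappa_{n,k}}, \widehat{\wtZ_{n,k}}, \sigma^\kappa_{n,k})$. Finally I would read off the constants: Lemma \ref{lem:ortho} gives $\|\psi^\kappa_{n,k}\|^2=\tfrac{1}{4(1+\kappa)}$, hence $\widehat{\psi^\kappa_{n,k}}=2\sqrt{1+\kappa}\,\psi^\kappa_{n,k}$; the change-of-variables computation preceding the statement together with \eqref{eq:Znknorm} gives $\|\wtZ_{n,k}\|^2=\tfrac{1}{1-\kappa^2}\tfrac{\pi}{n+1}$, hence $\widehat{\wtZ_{n,k}}=\tfrac{\sqrt{n+1}}{\sqrt\pi}\sqrt{1-\kappa^2}\,\wtZ_{n,k}$; and the spectral value is the ratio $\sigma^\kappa_{n,k}=\|\wtZ_{n,k}\|/\|\psi^\kappa_{n,k}\|=\tfrac{1}{\sqrt{1-\kappa}}\tfrac{2\sqrt\pi}{\sqrt{n+1}}$, using $\sqrt{1+\kappa}/\sqrt{1-\kappa^2}=1/\sqrt{1-\kappa}$.

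Since essentially all the analytic work — the two orthogonality/norm computations, the evaluation \eqref{eq:kerI0sharp} of the kernel modes, and the completeness of both bases — has already been carried out, there is no genuine obstacle remaining. The only point demanding a little care is the argument that $\ker A$ equals, and not merely contains, the stated span, which I would handle via the injectivity-on-the-orthocomplement argument above.
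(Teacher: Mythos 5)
Your proposal is correct and follows essentially the same route as the paper, which obtains Theorem \ref{thm:SVDI0sharp} by combining the abstract SVD observation with the preceding calculations: Lemma \ref{lem:ortho}, the identity \eqref{eq:kerI0sharp}, the formula \eqref{eq:wtZ}, and the change-of-variables orthogonality computation with \eqref{eq:Znknorm}. Your only addition is the explicit injectivity-on-the-orthocomplement argument showing $\ker I_0^\sharp\frac{1}{\mu}$ \emph{equals} (rather than merely contains) the stated span — a point the paper leaves implicit — and that argument is sound, since $\|Ax\|^2=\sum_{n,k}|c_{n,k}|^2\|\wtZ_{n,k}\|^2$ by orthogonality of the images, forcing all coefficients to vanish.
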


The proof of Theorem \ref{thm:main} now becomes straightforward.

\begin{proof}[Proof of Theorem \ref{thm:main}] In light of Theorem \ref{thm:SVDI0sharp}, the SVD of the adjoint of $I_0^\sharp \frac{1}{\mu}$ just consists of interchanging the families $\widehat{\psi^\kappa_{n,k}}$, $\widehat{\wtZ_{n,k}}$, and this is the operator we are interested in. We now compute
    \begin{align*}
	\left(f, I_0^\sharp \left[\frac{g}{\mu}\right]\right)_{w_\kappa\ dVol_\kappa} = \left(w_\kappa f, I_0^\sharp \left[\frac{g}{\mu}\right]\right)_{dVol_\kappa} = \left( I_0(w_\kappa f), \frac{g}{\mu} \right)_{\mu d\Sigma^2} = \left( I_0(w_\kappa f), g \right)_{d\Sigma^2}. 
    \end{align*}
    In other words, the adjoint of the operator $I_0^\sharp \frac{1}{\mu}\colon L^2(\partial_+ S\Dm, d\Sigma^2)\to L^2(\Dm,w_\kappa\ dVol_\kappa)$ is the operator 
    \begin{align*}
	A\colon L^2(\Dm,w_\kappa\ dVol_\kappa)\to L^2(\partial_+ S\Dm, d\Sigma^2), \qquad Af :=  I_0(w_\kappa f).
    \end{align*}
    In particular, the relation $A\ \widehat{\wtZ_{n,k}} = \sigma^\kappa_{n,k}\ \widehat{\psi^\kappa_{n,k}}$ implies $I_0 \left( w_\kappa \widehat{\wtZ_{n,k}} \right) = \sigma^\kappa_{n,k}\ \widehat{\psi^\kappa_{n,k}}$ for all $n,k$. Now, given $f\in w_\kappa L^2(\Dm, w_\kappa\ dVol_\kappa)$, $\frac{f}{w_\kappa}$ expands into the basis $\widehat{\wtZ_{n,k}}$,
    \begin{align*}
	\frac{f}{w_\kappa} = \sum_{n\ge 0}\sum_{k=0}^n a_{n,k} \widehat{\wtZ_{n,k}}, \quad\text{where}\quad  a_{n,k} = \left(\frac{f}{w_\kappa}, \widehat{\wtZ_{n,k}}\right)_{w_\kappa\ dVol_\kappa} = \left( f, \widehat{\wtZ_{n,k}} \right)_{dVol_\kappa}.
    \end{align*}
    Then we compute directly 
    \begin{align*}
	I_0 f = I_0 \left( \sum_{n,k} a_{n,k} w_\kappa \widehat{\wtZ_{n,k}} \right) = \sum_{n,k} a_{n,k} A \wtZ_{n,k} = \sum_{n,k} a_{n,k} \sigma^\kappa_{n,k} \widehat{\psi^\kappa_{n,k}}.  
    \end{align*}
    hence the result.
\end{proof}

\subsection{Proof of Lemmas \ref{lem:btalminus} and \ref{lem:thetap}} \label{sec:thetap}

\begin{proof}[Proof of Lemma \ref{lem:btalminus}]
    We will compute $e^{i(\beta_- + \ss(\alpha_-))}$ and $\sin(\ss(\alpha_-))$. The first quantity (or rather, its square) admits a rather simple expression. The way to arrive there is as follows: the unique $g_\kappa$-geodesic passing through $(\rho, c_\kappa(\rho) e^{i\theta})$ has (non unit speed) equation
    \begin{align*}
	T(x) = \frac{e^{i\theta}x + \rho}{1- \kappa e^{i\theta} \rho x}, 
    \end{align*}
    for $x\in \Rm$ if $\kappa\in [0,1)$ and $|x|\le (-\kappa)^{-1/2}$ if $\kappa\in (-1,0)$. The endpoints in the unit disk are for $|T(x)|^2 = 1$, which yields the quadratic equation
    \begin{align*}
	0 = x^2 + 2x\rho\cos\theta \frac{1+\kappa}{1-\kappa^2 \rho^2} + \frac{\rho^2-1}{1-\kappa^2 \rho^2} =: x^2 - Sx + P.
    \end{align*}
    By definition of the scattering relation, the two roots $x_{\pm}$ are such that $T(x_-) = e^{i\beta_-}$ and $T(x_+) = e^{i(\beta_- + 2\ss(\alpha_-) + \pi)}$, in particular, we obtain that 
    \begin{align*}
	-e^{2i(\beta_- + \ss(\alpha_-))} = T(x_+)T(x_-) &= \frac{e^{i\theta}x_+ + \rho}{1-\kappa e^{i\theta} \rho x_+} \frac{e^{i\theta}x_- + \rho}{1-\kappa e^{i\theta}\rho x_-} \\
	&= \frac{e^{2i\theta}P + \rho e^{i\theta} S + \rho^2}{1 - \kappa e^{i\theta} \rho S + \kappa^2 e^{2i\theta} \rho^2 P} \\
	&= - e^{2i\theta} \frac{1+\kappa\rho^2 e^{-2i\theta}}{1+\kappa\rho^2 e^{2i\theta}}.
    \end{align*}
    This yields the relation
    \begin{align*}
	2(\beta_- + \ss(\alpha_-)) = 2\left(\theta - \tan^{-1} \frac{\kappa \rho^2 \sin(2\theta)}{1+\kappa\rho^2 \cos(2\theta)}\right),
    \end{align*}
    which determines $\beta_- + \ss(\alpha_-)$ up to an additive $\pi$ term. With the Euclidean relation $\beta_- + \alpha_- + \pi = \theta$, we deduce the relation \eqref{eq:bmam}. 
    
    We now derive a formula for $\sin(\ss(\alpha_-))$. Since the surrounding space has constant curvature $4\kappa$, it is convenient to define the {\it weighted sine function} $\sin_{4\kappa}$ as follows:  
    \begin{align*}
	\sin_{4\kappa}(x)  = x - \frac{(4\kappa) x^3}{3!} + \frac{(4\kappa)^2x^5}{5!} - \frac{(4\kappa)^3x^7}{7!} + \cdots = \left\{
	\begin{array}{cc}
	    \frac{1}{2\sqrt{\kappa}} \sin(2\sqrt{\kappa}\ x), & \kappa>0, \\
	    \frac{1}{2\sqrt{-\kappa}} \sinh(2\sqrt{-\kappa}\ x), & \kappa<0.
	\end{array}
	\right.	
    \end{align*}
    Such a function appears in the \textbf{law of sines} for a $g_\kappa$-geodesic triangle of geodesic sidelengths $(a,b,c)$ and opposite angles $(A,B,C)$, namely we have 
    \begin{align}
	{\frac {\sin A}{\sin_{4\kappa} a}}={\frac {\sin B}{\sin_{4\kappa}b}}={\frac {\sin C}{\sin_{4\kappa}c}},
	\label{eq:sinerule}
    \end{align}
    see \cite{Katok1992}. Denoting by $d_\kappa(z_1,z_2)$ the $g_\kappa$-geodesic distance between $z_1$ and $z_2$, it follows directly from \eqref{eq:geo0} that for $\rho\in [-1,1]$
    \begin{align*}
	d_\kappa (\rho,0) = \left\{
	\begin{array}{cc}
	    \frac{1}{\sqrt{-\kappa}} \tanh^{-1} \left( \sqrt{-\kappa}\ \rho \right), & \kappa\in (-1,0), \\
	    \frac{1}{\sqrt{\kappa}} \tan^{-1} \left( \sqrt{\kappa}\ \rho \right), & \kappa\in (0,1),
	\end{array}
	\right.
    \end{align*} 
    and by rotation invariance, $d_\kappa(z_1,0) = d_\kappa(|z_1|,0)$. In particular, trigonometric identities imply in all cases that 
    \begin{align*}
	\sin_{4\kappa} (d_\kappa (\rho,0)) = \frac{\rho}{1+\kappa\rho^2}. 
    \end{align*}    
    Applying the sine rule \eqref{eq:sinerule} to the geodesic triangle with vertices $(0,\rho,e^{i\beta_-(\rho,\theta)})$, we obtain
    \begin{align*}
	\frac{\sin(-\alpha_-(\rho,\theta))}{\sin_{4\kappa}(d_\kappa(\rho,0))} = \frac{\sin\theta}{\sin_{4\kappa}(d_\kappa (e^{i\beta_-(\rho,\theta)}, 0))} = \frac{\sin\theta}{\sin_{4\kappa}(d_\kappa (1, 0))},
    \end{align*}
    and we obtain
    \begin{align*}
	\sin (-\alpha_{-}) = \frac{ \sin_{4\kappa}(d_\kappa(\rho,0))}{ \sin_{4\kappa}(d_\kappa(1,0))}\sin \theta = \frac{1+\kappa}{1+\kappa\rho^2} \rho\sin \theta,
    \end{align*}
    and hence $\sin (\alpha_-(\rho,\theta)) = - \frac{1+\kappa}{1+\kappa\rho^2} \rho \sin \theta$. Combined with \eqref{eq:important}, we arrive at \eqref{eq:sinam}.    
\end{proof}

\begin{proof}[Proof of Lemma \ref{lem:thetap}]
    We first connect the expression $\ss'(\alpha_-(\rho,\theta))$ with $\sin \theta$: 
    \begin{align*}
	\ss'(\alpha_-) &= \frac{1}{1-\kappa^2} (1+\kappa^2 - 2\kappa \cos(2\ss(\alpha_-))) \\
	&= \frac{1}{1-\kappa^2} \left( (1-\kappa)^2 + 4\kappa \sin^2(\ss(\alpha_-)) \right) \\
	&\stackrel{\eqref{eq:sinam}}{=} \frac{1}{1-\kappa^2} \left( (1-\kappa)^2 + 4\kappa \ss'(\alpha_-) \frac{1-\kappa^2}{(1+\kappa\rho^2)^2} \rho^2 \sin^2\theta\right)  \\
	&= \frac{1-\kappa}{1+\kappa} + \ss'(\alpha_-) \frac{4\kappa\rho^2}{(1+\kappa\rho^2)^2} \sin^2\theta.
    \end{align*}
    Solving for $\ss'(\alpha_-)$ we arrive at
    \begin{align}
	\ss'(\alpha_-(\rho,\theta)) = \frac{1-\kappa}{1+\kappa} \frac{(1+\kappa\rho^2)^2}{(1+\kappa\rho^2)^2 - 4\kappa\rho^2 \sin^2\theta}.
	\label{eq:tmpsintheta}
    \end{align}
    To obtain \eqref{eq:jactheta}, differentiate the relation $e^{2i\theta'} = \frac{e^{2i\theta}+\kappa\rho^2}{1+\kappa\rho^2 e^{2i\theta}}$ to obtain
    \begin{align*}
	e^{2i\theta'} \frac{\partial \theta'}{\partial \theta} = \frac{1-\kappa^2\rho^4}{(1+\kappa\rho^2 e^{2i\theta})^2} e^{2i\theta}. 
    \end{align*}
    Then
    \begin{align*}
	\frac{\partial \theta'}{\partial \theta} = \frac{1-\kappa^2\rho^4}{(1+\kappa\rho^2 e^{2i\theta})^2} e^{2i\theta} \frac{1+\kappa\rho^2 e^{2i\theta}}{e^{2i\theta}+\kappa\rho^2} = \frac{1-\kappa^2\rho^4}{(1+\kappa\rho^2 e^{2i\theta})(1+\kappa\rho^2 e^{-2i\theta})} = \frac{1-\kappa^2\rho^4}{(1+\kappa\rho^2)^2 - 4\kappa\rho^2 \sin^2\theta},
    \end{align*}
    and \eqref{eq:jactheta} follows from using \eqref{eq:tmpsintheta}. 

    Now to relate $\sin\theta$ and $\sin\theta'$, from the relation 
    \begin{align*}
	e^{2i\theta'} = \frac{e^{2i\theta}+\kappa\rho^2}{1+\kappa\rho^2 e^{2i\theta}} = \frac{(1+\kappa^2\rho^4)\cos(2\theta) + 2\kappa\rho^2 + i(1-\kappa^2\rho^4) \sin(2\theta)}{1+\kappa^2\rho^4 +2\kappa\rho^2\cos(2\theta)},
    \end{align*}
    whose real part gives
    \begin{align*}
	\cos(2\theta') = \mat{1+\kappa^2\rho^4}{2\kappa\rho^2}{2\kappa\rho^2}{1+\kappa^2\rho^4} (\cos(2\theta)). 
    \end{align*}
    Together with the relation $\cos(2\theta) = \mat{-2}{1}{0}{1}(\sin^2\theta)$, this implies the relation
    \begin{align*}
	\sin^2\theta' &= \mat{-1}{1}{0}{2} \mat{1+\kappa^2\rho^4}{2\kappa\rho^2}{2\kappa\rho^2}{1+\kappa^2\rho^4}\mat{-2}{1}{0}{1} (\sin^2\theta) \\
	&= \frac{(1-\kappa\rho^2)^2}{(1+\kappa\rho^2)^2 - 4\kappa\rho^2\sin^2\theta}\sin^2\theta \\
	&\stackrel{\eqref{eq:tmpsintheta}}{=} \frac{1+\kappa}{1-\kappa} \frac{(1-\kappa\rho^2)^2}{(1+\kappa\rho^2)^2} \ss'(\alpha_-) \sin^2\theta.
    \end{align*}
    Together with the fact that $\sin\theta$ and $\sin\theta'$ have simultaneously the same sign, \eqref{eq:sintheta} follows upon taking squareroots.    
\end{proof}

\appendix
\section{Spaces $C^\infty_{\alpha,\pm,\pm}(\partial_+ SM)$, operators $P_\pm$, $C_\pm$ and a refinement of the Pestov-Uhlmann range characterization} \label{sec:spaces}

In this section, we work on a general simple surface $(M,g)$ with inward boundary $\partial_+ SM$. The objects of study are the geodesic X-ray transforms $I_0 \colon C^\infty(M) \to C^\infty(\partial_+ SM)$ and $I_1 \colon C^\infty(M; TM) \to C^\infty(\partial_+ SM)$, defined for any $(x,v)\in \partial_+ SM$ as 
\begin{align*}
    I_0 f (x,v) := \int_0^{\tau(x,v)} f(\gamma_{x,v}(t))\ dt, \qquad I_1 h (x,v) := \int_0^{\tau(x,v)} \langle h(\gamma_{x,v}(t)), \dot\gamma_{x,v}(t) \rangle_g \ dt,
\end{align*}
where $f$ is a smooth function, $h$ is a smooth vector field, $(\gamma_{x,v}(t),\dot\gamma_{x,v}(t))$ is the unit speed geodesic with $(\gamma_{x,v}(0), \dot \gamma_{x,v}(0))= (x,v)$, and $\tau(x,v)$ is its first exit time. 

The Pestov-Uhlmann range characterization of $I_0$ and $I_1$ appearing in \cite[Theorem 4.4]{Pestov2004} relates the ranges of $I_0$ and $I_1$ with those of $P_-$ and $P_+$ as defined on 
\begin{align}
    C_\alpha^\infty(\partial_+ SM) := \{ u \in C^\infty(\partial_+ SM), \qquad A_+ u \in C^\infty(SM) \}.
    \label{eq:Calphainf}
\end{align}

We would like to restrict $C_\alpha^\infty(\partial_+ SM)$ to a 'half'-subspace incorporating a natural symmetry associated to whether one is integrating a function or a one-form. Namely, a function $u$ in the range of $I_0$ satisfies $\SS_A^* u = u$ and a function $u$ in the range of $I_1$ satisfies $\SS_A^* u = -u$. One must also encode whether extension from $\partial_+ SM$ to $\partial SM$ through $A_{\pm}$ produces smooth functions. 

To this effect, we then define
\begin{align*}
    C_{\alpha,\pm}^\infty (\partial_+ SM) &:= \{u\in C^\infty (\partial_+ SM),\quad A_\pm u \in C^\infty(\partial SM)\}.
\end{align*}
Thus, $C^\infty_{\alpha,+}(\partial_+ SM)$ coincides with $C_\alpha^\infty(\partial_+ SM)$ as defined in \cite{Pestov2004}. 

\begin{lemma}\label{lem:pullback} 
    The spaces $C_{\alpha,\pm}^\infty(\partial_+ SM)$ are stable under the pull-back $\SS_A^*$.     
\end{lemma}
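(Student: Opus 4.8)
The plan is to reduce the entire statement to the single commutation identity
\begin{align*}
    A_\pm(\SS_A^* u) = \SS_A^*(A_\pm u), \qquad u\in C^\infty(\partial_+ SM),
\end{align*}
where on the left $\SS_A^*$ is the pullback by $\SS_A$ acting on $\partial_+ SM$ (well defined since $\partial_+ SM$ is $\SS_A$-stable), and on the right it is the pullback by $\SS_A$ regarded as a diffeomorphism of all of $\partial SM$. Granting this, stability of $C_{\alpha,\pm}^\infty(\partial_+ SM)$ follows at once: on a simple surface $\SS_A$ is a smooth diffeomorphism of $\partial SM$ preserving each component $\partial_\pm SM$ (as recorded after \eqref{eq:SA0}, and realized concretely in the disk via the fan-beam formula $(\beta,\alpha)\mapsto(\beta+\pi+2\ss(\alpha),-\alpha)$ with $\ss$ smooth on $\Sm^1$). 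Hence $\SS_A^*$ preserves $C^\infty(\partial SM)$, so if $u\in C_{\alpha,\pm}^\infty(\partial_+ SM)$, i.e. $A_\pm u\in C^\infty(\partial SM)$, then $A_\pm(\SS_A^* u)=\SS_A^*(A_\pm u)$ is again smooth, which says exactly $\SS_A^* u\in C_{\alpha,\pm}^\infty(\partial_+ SM)$.

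To prove the commutation identity I would check it separately on the two components. On $\partial_+ SM$, both sides reduce to $u\circ \SS_A$, with no scattering relation entering, so equality is immediate from $\SS_A$-stability of $\partial_+ SM$. On $\partial_- SM$, the piecewise definition of $A_\pm$ gives (writing $w:=\SS_A^* u=u\circ\SS_A$ on $\partial_+ SM$)
\begin{align*}
    A_\pm(\SS_A^* u)\big|_{\partial_- SM} = \pm\, u\circ \SS_A\circ \SS, \qquad \SS_A^*(A_\pm u)\big|_{\partial_- SM} = \pm\, u\circ \SS\circ \SS_A,
\end{align*}
the second equality using that $\SS_A$ maps $\partial_- SM$ into itself, so that the outward branch $\pm u\circ\SS$ of $A_\pm u$ is the one evaluated. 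These agree provided $\SS\circ \SS_A=\SS_A\circ \SS$. This last fact holds generally: $\SS$ is a smooth involution of $\partial SM$, and the fiberwise flip $a\colon(x,v)\mapsto(x,-v)$ satisfies $\SS\circ a=a\circ\SS$, whence $\SS_A=a\circ\SS=\SS\circ a$ commutes with $\SS$ (indeed $\SS\circ\SS_A=\SS_A\circ\SS=a$). In the disk one verifies the same directly from \eqref{eq:SA0}, using $\ss(-\alpha)=-\ss(\alpha)$ and $\ss(\alpha+\pi)=\ss(\alpha)+\pi$, that both compositions equal the fiberwise antipode $(\beta,\alpha)\mapsto(\beta,\alpha+\pi)$.

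The argument is essentially bookkeeping, so there is no serious obstacle; the one point deserving care is the coexistence of the two meanings of $\SS_A^*$ (on $\partial_+ SM$ versus on $\partial SM$) together with the fact that $A_\pm$ is built from the scattering relation $\SS$ — this is precisely why the commutation $\SS\circ\SS_A=\SS_A\circ\SS$ is exactly the property one needs. I would also remark that smoothness of $\SS$ near the glancing directions is never invoked: the hypothesis $A_\pm u\in C^\infty(\partial SM)$ already encodes the matching condition there, while $\SS_A$ itself is smooth across $\alpha=\pm\pi/2$ because $\ss$ is.
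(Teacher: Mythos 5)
Your proof is correct and follows essentially the same route as the paper's: both arguments rest on the commutation relation $\SS_A^* A_\pm = A_\pm \SS_A^*$ together with the smoothness of $\SS_A$ as a diffeomorphism of $\partial SM$, so that $A_\pm(\SS_A^* u) = \SS_A^*(A_\pm u)$ is smooth whenever $A_\pm u$ is. The only difference is that the paper asserts this commutation relation without proof, whereas you verify it component by component via the identity $\SS\circ\SS_A = \SS_A\circ\SS$ (both equal to the fiberwise antipode), which is a welcome piece of bookkeeping but not a different method.
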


\begin{proof} The map $\SS_A$ is the composition of the scattering relation $\SS$ and the antipodal map $(x,v)\mapsto (x,-v)$, as such it can be regarded as a smooth diffeomorphism of $\partial SM$, thus $\SS_A^*$ can be viewed as an operator on $C^\infty(\partial_+ SM)$ or on $C^\infty(\partial SM)$. Moreover, we have the relations $\SS_A^* A_\pm = A_\pm \SS_A^*$. In particular, if $w\in C_{\alpha,\pm}^\infty(\partial_+ SM)$, then $A_\pm w$ is smooth on $\partial SM$. Then so is $\SS_A^* A_\pm w = A_\pm (\SS_A^* w)$, which exactly means that $\SS_A^* w\in C_{\alpha,\pm}^\infty(\partial_+ SM)$.     
\end{proof}
Lemma \ref{lem:pullback} justifies that we can now write the direct sum decompositions: 
\begin{align*}
    C_{\alpha,\pm}^\infty(\partial_+ SM) = C_{\alpha,\pm,+}^\infty(\partial_+ SM) \oplus C_{\alpha,\pm,-}^\infty(\partial_+ SM),
\end{align*}
where we have defined 
\begin{align}
    \begin{split}
	C_{\alpha,+,\pm}^\infty (\partial_+ SM) &:= \{u\in C_{\alpha,+}^\infty (\partial_+ SM),\quad \SS_A^* u = \pm u\}, \\    
	C_{\alpha,-,\pm}^\infty (\partial_+ SM) &:= \{u\in C_{\alpha,-}^\infty (\partial_+ SM),\quad \SS_A^* u = \pm u\}.	
    \end{split}
    \label{eq:Calphaspaces}    
\end{align}
Each decomposition is produced through the equality $w = w_+ + w_- = \frac{1}{2} (id + \SS_A^*) w + \frac{1}{2} (id - \SS_A^*)w$ which, thanks to Lemma \ref{lem:pullback}, produces summands in the correct spaces. Note that we can also characterize these spaces as
\begin{align*}
    C_{\alpha,+,\pm}^\infty (\partial_+ SM) &= \{u\in C_{\alpha,+}^\infty (\partial_+ SM),\ A_+ u \text{ is fiberwise even/odd}\}, \\
    C_{\alpha,-,\pm}^\infty (\partial_+ SM) &= \{u\in C_{\alpha,-}^\infty (\partial_+ SM),\ A_- u \text{ is fiberwise odd/even}\}.
\end{align*}
Recall then the definitions of the boundary operators
\begin{align*}
    P_\pm = A_-^* H_\pm A_+, \qquad C_{\pm} = \frac{1}{2} A_-^* H_\pm A_-.
\end{align*}
The spaces above provide natural smooth functional settings for these operators: 
\begin{itemize}
    \item the operators $P_\pm$ are naturally defined on $C_{\alpha,+}^\infty(\partial_+ SM)$ and in the direct decomposition $w = w_+ + w_-$, (where $\SS_A^* w_\pm = \pm w_\pm$), we get: 
	\begin{align*}
	    P_+ w &= P_+ w_+ \in \ker(id + \SS_A^*) \qquad (P_+ w_- = 0), \\
	    P_- w &= P_- w_- \in \ker(id - \SS_A^*) \qquad (P_- w_+ = 0).
	\end{align*}
    \item the operators $C_\pm$ are naturally defined on $C_{\alpha,-}^\infty (\partial_+ SM)$ and in the direct decomposition $w = w_+ + w_-$, (where $\SS_A^* w_\pm = \pm w_\pm$), we get: 
	\begin{align*}
	    C_{+} w &= C_+ w_- \in \ker (id + \SS_A^*) \qquad (C_+ w_+ = 0), \\
	    C_- w &= C_- w_+ \in \ker (id -\SS_A^*) \qquad (C_- w_- = 0).
	\end{align*}
\end{itemize}

The observations about the action of $P_\pm$ allows us to refine the Pestov-Uhlmann range characterization \cite[Theorem 4.4]{Pestov2004} as follows: 

\begin{proposition}\label{prop:PU} Let $(M,g)$ be a simple Riemannian surface with boundary. Then 

    (i) A function $u\in C^\infty(\partial_+ SM)$ belongs to the range of $I_0$ if and only if $u = P_-w$ for some $w\in C_{\alpha,+,-}^\infty(\partial_+ SM)$. 

    (ii) A function $u\in C^\infty(\partial_+ SM)$ belongs to the range of $I_1$ if and only if $u = P_+ w$ for some $w\in C_{\alpha,+,+}^\infty(\partial_+ SM)$.     
\end{proposition}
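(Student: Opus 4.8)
The plan is to deduce the refined statement directly from the original Pestov--Uhlmann characterization \cite[Theorem 4.4]{Pestov2004}, which I take in the form $\text{Ran}(I_0) = P_-\big(C_\alpha^\infty(\partial_+ SM)\big)$ and $\text{Ran}(I_1) = P_+\big(C_\alpha^\infty(\partial_+ SM)\big)$, where $C_\alpha^\infty(\partial_+ SM) = C_{\alpha,+}^\infty(\partial_+ SM)$. All that remains is to cut the space of admissible potentials down to the correct $\SS_A^*$-symmetry class, and the engine for this is the pair of annihilation identities recorded in the itemized list just above the statement: $P_- w_+ = 0$ whenever $w_+ \in C_{\alpha,+,+}^\infty(\partial_+ SM)$, and $P_+ w_- = 0$ whenever $w_- \in C_{\alpha,+,-}^\infty(\partial_+ SM)$. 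These follow from the parity of $A_+ w_\pm$ together with the facts that $H_-$ kills fiberwise even functions and $H_+$ kills fiberwise odd functions.

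For part (i), the implication ``$u = P_- w$ with $w \in C_{\alpha,+,-}^\infty \Rightarrow u \in \text{Ran}(I_0)$'' is immediate, since $C_{\alpha,+,-}^\infty(\partial_+ SM) \subseteq C_{\alpha,+}^\infty(\partial_+ SM) = C_\alpha^\infty(\partial_+ SM)$ and the cited theorem already gives $P_-\big(C_\alpha^\infty(\partial_+ SM)\big) \subseteq \text{Ran}(I_0)$. Conversely, if $u \in \text{Ran}(I_0)$, the cited theorem supplies a potential $w \in C_\alpha^\infty(\partial_+ SM) = C_{\alpha,+}^\infty(\partial_+ SM)$ with $u = P_- w$. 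Using Lemma \ref{lem:pullback} I would split $w = w_+ + w_-$ with $w_\pm = \tfrac12(id \pm \SS_A^*) w \in C_{\alpha,+,\pm}^\infty(\partial_+ SM)$; the identity $P_- w_+ = 0$ then collapses this to $u = P_- w_-$ with $w_- \in C_{\alpha,+,-}^\infty(\partial_+ SM)$, as desired. As a consistency check, the output lands in the expected symmetry class, $P_- w_- \in \ker(id - \SS_A^*)$, matching the invariance $\SS_A^* u = u$ of function ray data.

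Part (ii) is the mirror image: the inclusion $C_{\alpha,+,+}^\infty(\partial_+ SM) \subseteq C_\alpha^\infty(\partial_+ SM)$ handles one direction, and for the other I would split the Pestov--Uhlmann potential and discard its odd part via $P_+ w_- = 0$, leaving $u = P_+ w_+$ with $w_+ \in C_{\alpha,+,+}^\infty(\partial_+ SM)$ and output in $\ker(id + \SS_A^*)$, matching $\SS_A^* u = -u$ for one-form data. The main obstacle is not this symmetry bookkeeping, which is routine given the itemized observations, but rather ensuring that \cite[Theorem 4.4]{Pestov2004} is transcribed into exactly the form $\text{Ran}(I_0) = P_-(C_\alpha^\infty)$ and $\text{Ran}(I_1) = P_+(C_\alpha^\infty)$ under the present conventions --- in particular, that the splitting $H = H_+ + H_-$ and the pairing of $P_-$ with $I_0$ (and of $P_+$ with $I_1$) agree in sign and parity with the operators appearing there, since a mismatch would interchange the two subspaces.
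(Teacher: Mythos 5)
Your proposal is correct and follows essentially the same route as the paper's proof: invoke the original Pestov--Uhlmann characterization to get a potential $v \in C_{\alpha,+}^\infty(\partial_+ SM)$, split $v = v_+ + v_-$ via Lemma \ref{lem:pullback}, and use the annihilation identities $P_- v_+ = 0$ (resp.\ $P_+ v_- = 0$) to discard the wrong-parity summand. The only difference is presentational: you spell out the trivial converse inclusion and flag the transcription of \cite[Theorem 4.4]{Pestov2004} as a point to verify, both of which the paper leaves implicit.
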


\begin{proof} We prove (i) as (ii) is similar. The usual characterization produces $v\in C_{\alpha,+}^\infty (\partial_+ SM)$ such that $u = P_- v$. Writing $v = v_+ + v_-$, we have that $u = P_{-} (v_+ + v_-) = P_- v_-$ where $v_- \in C_{\alpha,+,-}^\infty(\partial_+ SM)$. Thus $w := v_-$ fulfills (i).
\end{proof}

\bibliographystyle{siam}
\bibliography{../../../000-TexTouch/bibliography/bibliography}

\begin{thebibliography}{10}

\bibitem{Bertero1998}
{\sc M.~Bertero and P.~Boccacci}, {\em Introduction to inverse problems in
  imaging}, CRC press, 1998.

\bibitem{Cormack1981}
{\sc A.~Cormack}, {\em The radon transform on a family of curves in the plane},
  Proceedings of the American Mathematical Society, 83 (1981), pp.~325--330.

\bibitem{Cormack1982}
\leavevmode\vrule height 2pt depth -1.6pt width 23pt, {\em The radon transform
  on a family of curves in the plane. ii}, Proceedings of the American
  Mathematical Society, 86 (1982), pp.~293--298.

\bibitem{Derevtsov2011a}
{\sc E.~Y. Derevtsov, A.~V. Efimov, A.~K. Louis, and T.~Schuster}, {\em
  Singular value decomposition and its application to numerical inversion for
  ray transforms in 2d vector tomography}, Journal of Inverse and Ill-Posed
  Problems, 19 (2011), pp.~689--715.

\bibitem{GelfandGraev1960}
{\sc I.~Gelfand and M.~Graev}, {\em Integrals over hyperplanes of basic and
  generalized functions}, Dokl. Akad. Nauk. SSSR, 135 (1960), pp.~1307--1310.
\newblock English transl., Soviet Math. Dokl. {\bf 1} (1960), 1369--1372.

\bibitem{Helgason1999}
{\sc S.~Helgason}, {\em The Radon Transform}, Birk\"auser, second~ed., 1999.

\bibitem{Holman2017}
{\sc S.~Holman, F.~Monard, and P.~Stefanov}, {\em The attenuated geodesic x-ray
  transform}, Inverse Problems, 34 (2018), p.~064003.

\bibitem{Ilmavirta2019}
{\sc J.~Ilmavirta and F.~Monard}, {\em 4 integral geometry on manifolds with
  boundary and applications}, The Radon Transform: The First 100 Years and
  Beyond, 22 (2019), p.~43.

\bibitem{Katok1992}
{\sc S.~Katok}, {\em Fuchsian groups}, University of Chicago press, 1992.

\bibitem{Kazantsev2004}
{\sc S.~G. Kazantsev and A.~A. Bukhgeim}, {\em Singular value decomposition for
  the 2d fan-beam radon transform of tensor fields}, Journal of Inverse and
  Ill-posed Problems jiip, 12 (2004), pp.~245--278.

\bibitem{Krishnan2010}
{\sc V.~Krishnan}, {\em On the inversion formulas of {P}estov and {U}hlmann for
  the geodesic ray transform}, J. Inv. Ill-Posed Problems, 18 (2010),
  pp.~401--408.

\bibitem{Louis1996}
{\sc A.~Louis and T.~Schuster}, {\em A novel filter design technique in 2d
  computerized tomography}, Inverse Problems, 12 (1996), p.~685.

\bibitem{Louis1984}
{\sc A.~K. Louis}, {\em Orthogonal function series expansions and the null
  space of the radon transform}, SIAM journal on mathematical analysis, 15
  (1984), pp.~621--633.

\bibitem{Louis1985}
\leavevmode\vrule height 2pt depth -1.6pt width 23pt, {\em Tikhonov-phillips
  regularization of the radon transform}, in Constructive methods for the
  practical treatment of integral equations, Springer, 1985, pp.~211--223.

\bibitem{Louis1986}
\leavevmode\vrule height 2pt depth -1.6pt width 23pt, {\em Incomplete data
  problems in x-ray computerized tomography}, Numerische Mathematik, 48 (1986),
  pp.~251--262.

\bibitem{Ludwig1966}
{\sc D.~Ludwig}, {\em The radon transform on euclidean space}, Communications
  on Pure and Applied Mathematics, 19 (1966), pp.~49--81.

\bibitem{Maass1987}
{\sc P.~Maass}, {\em The x-ray transform: singular value decomposition and
  resolution}, Inverse problems, 3 (1987), p.~729.

\bibitem{Maass1991}
\leavevmode\vrule height 2pt depth -1.6pt width 23pt, {\em Singular value
  decompositions for radon transforms}, in Mathematical Methods in Tomography,
  Springer, 1991, pp.~6--14.

\bibitem{Maass1992}
\leavevmode\vrule height 2pt depth -1.6pt width 23pt, {\em The interior radon
  transform}, SIAM Journal on Applied Mathematics, 52 (1992), pp.~710--724.

\bibitem{Marr1974}
{\sc R.~B. Marr}, {\em On the reconstruction of a function on a circular domain
  from a sampling of its line integrals}, Journal of mathematical Analysis and
  Applications, 45 (1974), pp.~357--374.

\bibitem{Monard2013}
{\sc F.~Monard}, {\em Numerical implementation of two-dimensional geodesic
  {X}-ray transforms and their inversion.}, SIAM J. Imaging Sciences, 7 (2014),
  pp.~1335--1357.
\newblock 

\bibitem{Monard2015a}
\leavevmode\vrule height 2pt depth -1.6pt width 23pt, {\em Efficient tensor
  tomography in fan-beam coordinates}, Inverse Probl. Imaging, 10 (2016),
  pp.~433--459.
\newblock 

\bibitem{Monard2013b}
{\sc F.~Monard, P.~Stefanov, and G.~Uhlmann}, {\em {T}he geodesic {X}-ray
  transform on {R}iemannian surfaces with conjugate points}, Communications in
  Mathematical Physics, 337 (2015), pp.~1491--1513.
\newblock 

\bibitem{Mukhometov1977}
{\sc R.~Mukhometov}, {\em The reconstruction problem of a two-dimensional
  riemannian metric, and integral geometry}, Dokl. Akad. Nauk. SSSR, 232
  (1977), pp.~32--35.
\newblock (Russian).

\bibitem{Natterer2001}
{\sc F.~Natterer}, {\em The Mathematics of Computerized Tomography}, SIAM,
  2001.

\bibitem{Pestov2004}
{\sc L.~Pestov and G.~Uhlmann}, {\em On the characterization of the range and
  inversion formulas for the geodesic {X}-ray transform}, International Math.
  Research Notices, 80 (2004), pp.~4331--4347.

\bibitem{Quinto1983}
{\sc E.~T. Quinto}, {\em Singular value decompositions and inversion methods
  for the exterior radon transform and a spherical transform}, Journal of
  mathematical Analysis and Applications, 95 (1983), pp.~437--448.

\bibitem{Radon1917}
{\sc J.~Radon}, {\em {\"U}ber die {B}estimmung von {F}unktionen durch ihre
  {I}ntegralwerte l\"angs gewisser {M}annigfaltigkeiten}, Berichte \"uber die
  Verhandlungen der K\"oniglich-S\"achsischen Akademie der Wissenschaften zu
  Leipzig, Mathematisch-Physische Klasse, 69 (1917), pp.~262--277.

\bibitem{Rigaud2014}
{\sc G.~Rigaud, A.~Lakhal, and A.~K. Louis}, {\em Series expansions of the
  reconstruction kernel of the radon transform over a cormack-type family of
  curves with applications in tomography}, SIAM Journal on Imaging Sciences, 7
  (2014), pp.~924--943.

\bibitem{Stefanov2012a}
{\sc P.~Stefanov and G.~Uhlmann}, {\em The geodesic {X}-ray transform with fold
  caustics}, Analysis and PDE, 5 (2012), pp.~219--260.

\bibitem{Zernike1934}
{\sc F.~Zernike}, {\em Beugungstheorie des schneidenver-fahrens und seiner
  verbesserten form, der phasenkontrastmethode}, Physica, 1 (1934),
  pp.~689--704.

\end{thebibliography}

\end{document}